\newtheorem{thm}{Theorem}[section]
\newtheorem*{thm*}{Theorem}
\newtheorem{lem}[thm]{Lemma}
\newtheorem{cor}[thm]{Corollary}
\newtheorem{prop}[thm]{Proposition}
\newtheorem*{conjecture*}{Conjecture}
\theoremstyle{remark} 
\newtheorem*{question*}{Question}
\newtheorem{remark}[thm]{Remark}
\theoremstyle{definition} 
\newtheorem{define}[thm]{Definition}
\numberwithin{equation}{section}  
\newcommand{\OO}{\mathcal{O}}    
\newcommand{\FF}{\mathbb{F}}      
\newcommand{\RR}{\mathbb{R}}     
\newcommand{\PP}{\mathbb{P}}      
\newcommand{\QQ}{\mathbb{Q}}      
\newcommand{\CC}{\mathbb{C}}      
\newcommand{\be}{\begin{equation}}
\newcommand{\ee}{\end{equation}}
\newcommand{\benn}{\begin{equation*}}
\newcommand{\eenn}{\end{equation*}}
\newcommand{\ba}{\begin{aligned}}
\newcommand{\ea}{\end{aligned}}
\newcommand{\bbm}{\begin{bmatrix}}
\newcommand{\ebm}{\end{bmatrix}}
\newcommand{\bpm}{\begin{pmatrix}}
\newcommand{\epm}{\end{pmatrix}}
\newcommand{\bi}{\begin{itemize}}
\newcommand{\ei}{\end{itemize}}
\newcommand{\ord}{\operatorname{ord}}
\newcommand{\simarrow}{\stackrel{\sim}{\rightarrow}}    
\newcommand{\Berk}{\mathbf{P}^1}  
\newcommand{\Hull}{\mathrm{Hull}}      
\newcommand{\Crit}{\mathrm{Crit}}     
\newcommand{\Ram}{\mathcal{R}}      
\newcommand{\PGL}{\mathrm{PGL}}    
\newcommand{\DD}{\mathcal{D}}     
\newcommand{\BB}{\mathcal{B}}    
\newcommand{\HH}{\mathbf{H}}     
\newcommand{\Ramtot}{\mathcal{R}^{\mathrm{tot}}}   
\renewcommand{\aa}{\mathbf{a}}
\newcommand{\bb}{\mathbf{b}}
\newcommand{\rr}{\mathbf{r}}
\renewcommand{\ss}{\mathbf{s}}
\newcommand{\Aff}{\mathbf{A}}
\newcommand{\diam}{\mathrm{diam}}
\renewcommand{\AA}{\mathcal{A}}
\newcommand{\wronsk}{\mathrm{Wr}} 
\newtheorem*{thmA}{Theorem A}
\newtheorem*{thmB}{Theorem B}
\newtheorem*{thmC}{Theorem C}
\title{Topology and Geometry of the Berkovich Ramification Locus for
	Rational Functions, I}
\author{Xander Faber \\
Department of Mathematics \\
University of Hawaii, Honolulu, HI  \\
\texttt{xander@math.hawaii.edu}
}
\date{}
\begin{document}

\maketitle
\tableofcontents


	\begin{abstract}
		We initiate a detailed study of 
		the ramification locus for projective endomorphisms of the Berkovich projective line --- 
		the non-Archimedean analog of the Riemann sphere.  \\
		 \textit{2010 Mathematics Subject Classification.} 14H05 (primary); 11S15 (secondary).
	\end{abstract}

\section{Introduction}

	Given a nonconstant holomorphic map $f: X \to Y$ between compact Riemann surfaces, one of the first objects we learn to construct is its ramification divisor $R_f$, which describes the locus at which $f$ fails to be locally injective. The divisor $R_f$ is a formal linear combination of points of $X$ that is combinatorially constrained by the Hurwitz Formula: $2g_X - 2 = \deg(f) (2g_Y - 2) + \deg(R_f)$.
	
	The goal of the present article is to initiate a study of the ramification locus in the setting of non-Archimedean analytic geometry. Here the role of a Riemann surface is played by a projective Berkovich analytic curve over a non-Archimedean field $k$. As these curves have many points that are not algebraic over $k$, some new (non-algebraic) ramification behavior appears. For example, the ramification locus is no longer a divisor, but rather a closed analytic subspace. Berkovich first observed this ``geometric ramification'' in \cite[\S6.3]{Berkovich_Etale_1993}. 
	
	We begin our study by restricting attention to rational functions, viewed as endomorphisms of the projective line $\Berk$. This simplest first case  has the benefit of being approachable by concrete techniques, many of which were developed by Rivera-Letelier \cite{Rivera-Letelier_Asterisque_2003, Rivera-Letelier_Espace_Hyperbolique_2003, Rivera-Letelier_Periodic_Points_2005}, Favre/Rivera-Letelier \cite{Favre_Rivera-Letelier_Ergodic_2010},  and Baker/Rumely \cite{Baker-Rumely_BerkBook_2010}. As critical points occupy a central position in the study of complex dynamical systems on the Riemann sphere, it is not unreasonable to suppose that a better understanding of the Berkovich ramification locus for rational functions will have applications to non-Archimedean dynamical systems. In fact, this work was initially inspired by dynamical considerations in \cite{Favre_Rivera-Letelier_Ergodic_2010}. The simple structure of the ramification locus for tame polynomials plays a fundamental role in the recent work of Trucco \cite{Trucco_Tame_Polynomials_2012}. The nature of the ramification locus for dynamical systems defined over the formal Laurent series field $\CC(\!(t)\!)$ also sheds some light on degenerations of complex dynamical systems. See \cite{Kiwi_Puiseux_Dynamics_2006, Kiwi_Rescaling_Limits_2012}.

	Let $k$ be an algebraically closed field that is complete with respect to a fixed nontrivial non-Archimedean absolute value $|\cdot|$. For example, $k$ could be the completion of an algebraic closure of $\QQ_p$ or of $\FF_p(\!(t)\!)$. Write $\PP^1_k$ for the (algebraic) projective line over $k$, and write $\Berk = \Berk_k$ for its Berkovich analytification. 
	A rational function $\varphi \in k(z)$, viewed as a morphism $\varphi: \PP^1_k \to \PP^1_k$, extends functorially to a morphism of $\Berk$ (which we also call $\varphi$). Intuitively, it describes the action of $\varphi$ on disks in $\PP^1(k)$. 
As $\varphi$ is a finite morphism, one may associate to each point $x \in \Berk$ a local degree or multiplicity $m_\varphi(x)$: in a weak neighborhood of $x$, the map $\varphi$ is $m_\varphi(x)$-to-$1$. The \textbf{Berkovich ramification locus} is defined to be the set 
	\[
		\Ram_\varphi = \{x \in \Berk : m_\varphi(x) > 1\}.
	\] 
It is a closed subset of $\Berk$ with no isolated point. 
Our first main result provides a bound for the number of connected components $\Ram_\varphi$. 

\begin{thmA}[Connected Components]
	Let $\varphi \in k(z)$ be a nonconstant rational function. Each connected component of the Berkovich ramification locus of $\varphi$ contains at least two critical points of $\varphi$, counted with weights.\footnote{For a rational function $\varphi \in k(z)$, a point at which the induced map on the tangent space of $\PP^1_k$ vanishes will be called a \textbf{critical point}. The order of vanishing is called the \textbf{weight}.} In particular, $\Ram_\varphi$ has at most  $\deg(\varphi) - 1$ connected components.  
\end{thmA}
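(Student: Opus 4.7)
The plan is to show that each connected component $C$ of $\Ram_\varphi$ contains classical critical weight at least $2$, from which the claimed bound of $\deg(\varphi)-1$ components follows immediately via the classical Riemann-Hurwitz identity giving total critical weight $2\deg(\varphi)-2$. The strategy combines local analysis at type II points of $C$ with a Riemann-Hurwitz count on residue curves and on Berkovich subdisks, plus a connectivity lemma that traps ramified directions inside $C$.

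First, I would set up the local picture. At a type II point $x$, the map $\varphi$ induces a finite morphism $\tilde\varphi_x \colon \PP^1_{\tilde k}\to\PP^1_{\tilde k}$ of degree $m_\varphi(x)$, and each tangent direction $\vec v$ at $x$ carries a local degree $d_{\vec v}\ge 1$ equal to the local ramification of $\tilde\varphi_x$ at the corresponding residue point. The key connectivity lemma I would prove is: if $x\in C$ and $d_{\vec v}\ge 2$, then $C$ extends nontrivially into the open Berkovich disk $B_{\vec v}$, because along the segment leaving $x$ in direction $\vec v$ the multiplicity immediately stabilizes at $d_{\vec v}\ge 2$, placing those points in $\Ram_\varphi$ and path-connecting them to $x$ in the tree structure.

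Next I would pick a type II point $x\in C$ realizing the maximum multiplicity on $C$, which forces $m_\varphi(x)\ge 2$. Applying Riemann-Hurwitz to the genus-zero residue map $\tilde\varphi_x$ in the tame case yields $\sum_{\vec v}(d_{\vec v}-1)=2m_\varphi(x)-2\ge 2$, so the total ramified weight at $x$ is at least $2$ and is supported on the directions $\vec v$ with $d_{\vec v}\ge 2$. The connectivity lemma forces all such ramified directions to extend $C$ into the corresponding $B_{\vec v}$; separately, Riemann-Hurwitz for the proper degree-$d_{\vec v}$ cover of contractible Berkovich disks $\varphi\colon B_{\vec v}\to\varphi(B_{\vec v})$ places $d_{\vec v}-1$ classical critical weight inside $B_{\vec v}$. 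A tree-traversal argument on components of $\Berk\setminus C$ then confirms those critical points actually lie in $C$, so $C$ carries critical weight at least $\sum_{\vec v\text{ ramified}}(d_{\vec v}-1)\ge 2$.

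The hardest step will be the wild case, when the residue characteristic divides some $d_{\vec v}$ or $m_\varphi(x)$. Then $\tilde\varphi_x$ can be purely inseparable, the naive Hurwitz identity on the residue curve acquires a nontrivial wild correction (every direction becomes ``ramified'' with identical degree), and the formula that $B_{\vec v}$ contains $d_{\vec v}-1$ classical critical weight likewise picks up a correction. The technical heart of the argument should be showing that these two wild defects cancel in a coordinated way, so that the classical critical weight in $C$ still ends up at least $2$. I expect this will require an auxiliary induction on $\deg(\varphi)$, restricting $\varphi$ to a subdisk containing $C$, in order to isolate the wild contribution and prevent it from absorbing the critical count that Riemann-Hurwitz would otherwise deliver.
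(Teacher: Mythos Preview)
Your approach has a genuine structural gap, even in the tame case. You apply Riemann--Hurwitz to the residue map at a chosen type~II point $x\in C$ and then assert that the disk $B_{\vec v}$ in each ramified direction carries $d_{\vec v}-1$ classical critical weight via ``Riemann--Hurwitz for the proper degree-$d_{\vec v}$ cover $\varphi\colon B_{\vec v}\to\varphi(B_{\vec v})$.'' But $\varphi(B_{\vec v})$ need not be a disk: it can equal all of $\Berk$, in which case $\varphi\vert_{B_{\vec v}}$ is not a proper cover onto a contractible target and the naive Hurwitz count fails. (This is exactly the phenomenon the paper isolates as \emph{surplus multiplicity}: the critical weight in $B_{\vec v}$ is $2s_\varphi(x,\vec v)+d_{\vec v}-1$, not $d_{\vec v}-1$.) More seriously, even granting that $B_{\vec v}$ contains enough critical weight, you give no mechanism for placing that weight in $C$: the disk $B_{\vec v}$ can contain other connected components of $\Ram_\varphi$ and critical points outside $C$, and your ``tree-traversal'' is a placeholder rather than an argument. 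The characterization of endpoints of $\Ram_\varphi$ as critical points, which would close this gap in the tame case, is proved in the paper only \emph{after} Theorem~A, so you cannot invoke it. Finally, your wild case is not a proof but a hope that two corrections cancel; there is no evidence this happens in the way you suggest.

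The paper's argument inverts your viewpoint and thereby avoids all three difficulties. Instead of pushing outward from $x$ into ramified directions, it looks at the \emph{complement}: each connected component $U_\alpha$ of $\Berk\smallsetminus C$ is an open Berkovich disk whose boundary directional multiplicity is exactly $1$ (since $U_\alpha$ lies outside $\Ram_\varphi$). Because $p\nmid 1$ always, the critical--surplus formula (Proposition~\ref{Prop: Critical surplus}) applies uniformly, tame or wild, and gives $\sum_{c\in\Crit(\varphi)\cap U_\alpha} w_\varphi(c)=2s_\varphi(U_\alpha)$. The surplus multiplicity in turn lower-bounds the number of $\varphi$-preimages of $\varphi(x)$ inside $U_\alpha$; summing and comparing with $\deg(\varphi)$ yields $\sum_{c\notin C}w_\varphi(c)\le 2(\deg(\varphi)-m_\varphi(x))$, and Hurwitz finishes. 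The point is that by working on the complement you force the directional multiplicity to be $1$, which sidesteps both the wild Hurwitz defect and the question of which critical points belong to $C$.
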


	The theorem is optimal in the following sense. For any algebraically closed field $k$ that is complete with respect to a nontrivial non-Archimedean absolute value and any integers $1 \leq n < d$, there exists a rational function $\varphi \in k(z)$ of degree~$d$ whose ramification locus has precisely $n$ connected components. 

	A field $k$ as above always admits nontrivial extensions by non-Archimedean valued fields; it is one feature of non-Archimedean analysis that sets it apart from complex analysis. 
Let $K / k$ be an extension of algebraically closed and complete non-Archimedean fields, so that the absolute value on $K$ is an extension of the one on $k$. There is a natural inclusion $\PP^1(k) \hookrightarrow \PP^1(K)$, and this inclusion extends to the Berkovich analytifications $\iota:\Berk_k \hookrightarrow \Berk_K$. However, this last map is not a morphism of analytic spaces (unless $K = k$!), and so we must spend some time proving that it preserves many of the features relevant to our study of ramification. In particular, we will show that this inclusion is continuous, that it preserves multiplicities, and that it preserves a certain natural metric on $\Berk \smallsetminus \PP^1(k)$. The existence of the inclusion $\iota$ is closely related to Berkovich's notion of ``peaked point'' \cite[5.2]{Berkovich_Spectral_Theory_1990} and Poineau's notion of ``universal point'' \cite{Poineau_Angelique}, although these latter notions extend to arbitrary analytic spaces. 
	
	A rational function $\varphi \in k(z)$ can act via an inseparable morphism on the local rings of certain points of $\Berk$;  Rivera-Letelier calls this ``inseparable reduction at a type~II point.''  We give a natural extension of Rivera-Letelier's definition to all points of $\Berk$ by enlarging the field $k$ in such a way that all non-classical points become type~II points. 
As an application of this work on extension of scalars and inseparable reduction, we are able to give a natural characterization of the interior of the ramification locus for the strong topology on $\Berk$. 
		
\begin{thmB}[Interior Points]
	Let $\varphi \in k(z)$ be a nonconstant rational function.
	\begin{enumerate}
		\item The set of points at which $\varphi$ has inseparable reduction coincides with the 
			strong interior of the Berkovich ramification locus. 
		\item The ramification locus has empty weak interior unless $\varphi$ is itself 
			inseparable, in which case $\Ram_\varphi = \Berk$
	\end{enumerate}
\end{thmB}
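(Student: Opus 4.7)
My plan is to prove (2) first, then (1), making essential use of the extension-of-scalars framework and the extended notion of inseparable reduction developed earlier in the paper.

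For (2): if $\varphi$ is itself inseparable, factor $\varphi=\psi\circ F$ with $F$ a power of Frobenius and $\psi$ separable; then every multiplicity of $\varphi$ is divisible by $p=\operatorname{char}(\tilde{k})$, so $m_\varphi\geq p>1$ on all of $\Berk$ and $\Ram_\varphi=\Berk$. Conversely, if $\varphi$ is separable, then by Riemann--Hurwitz it has only finitely many classical critical points. Since $\PP^1(k)$ is weakly dense in $\Berk$, any nonempty weak open set $U$ contains a non-critical classical point, which has multiplicity one, so $U\not\subset\Ram_\varphi$ and the weak interior of $\Ram_\varphi$ is empty.

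For (1), forward direction, assume $\varphi$ has inseparable reduction at $x$. Using the paper's extension-of-scalars, choose $K/k$ so that $\iota(x)\in\Berk_K$ is type II and the classical reduction $\tilde{\varphi}_{\iota(x)}\in\tilde{K}(T)$ is inseparable. Every point of $\PP^1(\tilde{K})$ then has multiplicity at least $p$ under $\tilde{\varphi}_{\iota(x)}$, so every tangent direction at $\iota(x)$ has directional multiplicity at least $p$. Hence a sufficiently small strong ball around $\iota(x)$ in $\Berk_K$ lies in $\Ram_\varphi$. Since $\iota$ is an isometric embedding on $\Berk\smallsetminus\PP^1(k)$ which preserves multiplicities, pulling back yields a strong ball around $x$ in $\Berk_k$ contained in $\Ram_\varphi$, placing $x$ in the strong interior.

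For (1), reverse direction, argue contrapositively: assume the reduction at $x$ is separable, and show every strong neighborhood of $x$ meets the complement of $\Ram_\varphi$. Extending scalars as above, $\tilde{\varphi}_{\iota(x)}\in\tilde{K}(T)$ is separable. Choose a type II point $y\in\Berk_k$ strongly close to $x$ (possibly $y=x$ if $x$ is already type II); one shows that separability of $\tilde{\varphi}_{\iota(x)}$ forces the classical reduction $\tilde{\varphi}_y\in\tilde{k}(T)$ to be separable. By Riemann--Hurwitz applied to $\tilde{\varphi}_y$ there is then a non-critical tangent direction $\vec{v}$ at $y$; points slightly in direction $\vec{v}$ from $y$ have multiplicity one. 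For $y$ close enough to $x$ the strong neighborhood of $x$ contains a whole strong neighborhood of $y$, and hence these multiplicity-one points.

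The principal obstacle is the step asserting that separability of $\tilde{\varphi}_{\iota(x)}$ propagates to separability of $\tilde{\varphi}_y$ at nearby type II points $y\in\Berk_k$. One expects this to follow from the facts that reductions at neighboring type II points are related by Möbius coordinate changes preserving separability, and that separability is invariant under base change $\tilde{k}\hookrightarrow\tilde{K}$; making this rigorous requires a careful analysis of how the extended reduction at a non-type-II $x$ compares to classical reductions at $\iota$-image type II points nearby. The remainder of the argument amounts to a direct application of the properties of $\iota$ established earlier together with the standard theory of directional multiplicities at type II points.
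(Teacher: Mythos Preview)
Your arguments for part~(2) and for the forward direction of part~(1) are essentially the paper's. The gap is in the reverse direction of~(1), specifically at type~IV points, and it is more serious than your final paragraph suggests.

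The obstacle you flag---that separable reduction at $x$ should propagate to separable reduction at nearby type~II points $y\in\Berk_k$---is not merely hard to verify; it is \emph{false} in the type~IV case. Suppose $x$ is type~IV with separable reduction, so $\iota(x)$ is a type~II point of $\Berk_K$ at which $\varphi_K$ has separable reduction. It can happen that $\iota(x)$ is an \emph{endpoint} of $\Ram_{\varphi_K}$: in suitable coordinates the reduction there is a polynomial $h(z^p)+cz$ with $c\neq 0$ and $\deg(h)\geq 1$, so its degree $p\cdot\deg(h)$ is divisible by~$p$. The unique ramified direction at $\iota(x)$ has directional multiplicity $p\cdot\deg(h)$, and this is precisely the direction pointing back toward the image of $\Berk_k$ (since $x$ is type~IV, $\iota(\Berk_k)$ meets only one direction at $\iota(x)$). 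By Lemma~\ref{Lem: Inseparable Nearby}, every point on an initial segment from $\iota(x)$ in that direction has \emph{inseparable} reduction. In particular, every type~II point of $\Berk_k$ on the unique segment from $x$, once close enough, has inseparable reduction. Your proposed mechanism---reductions at neighboring points related by M\"obius changes---only constrains points along a common segment, and along that segment separability genuinely fails to propagate.

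The paper does not try to propagate separability. Instead it proves a direct Newton-polygon estimate, the Cone Lemma (Lemma~\ref{Lem: Cone Lemma}): when the reduction at $\iota(x)$ has the shape $h(z^p)+cz$, one can exhibit points $\zeta_{B,|A|}$ with $|B|$ slightly larger than~$1$ and $|A|$ bounded away from~$1$ that lie \emph{off} the main segment, are images under $\iota$ of points of $\Berk_k$, are $\rho$-close to $x$, and have multiplicity~$1$. These are the witnesses that $x$ is not in the strong interior. So your strategy of finding an unramified direction at some nearby $y$ is ultimately what happens, but locating such a $y$ is the entire content of the argument in the type~IV case and cannot be extracted from the soft considerations you propose.
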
 

Following Trucco \cite{Trucco_Tame_Polynomials_2012}, we say that a rational function $\varphi$ is \textbf{tame} if its ramification locus has only finitely many branch points. Theorem~B allows us to give a number of equivalent characterizations of tame rational functions (Corollary~\ref{Cor: Tame characterization}). We remark that a sufficient condition for a rational function $\varphi$ to be tame is that $\mathrm{res}.\mathrm{char}.(k) = 0$ or $\mathrm{res}.\mathrm{char}.(k) > \deg(\varphi)$ (Corollary~\ref{Cor: Large Res Char is Tame}).

	We also look at the special setting of rational functions with a totally ramified point; i.e., a point $x \in \Berk$ such that $m_\varphi(x) = \deg(\varphi)$. For example, this includes the important cases of polynomials ($x = \infty$) and rational functions with good reduction ($x$ is the Gauss point). For the following statement, let $\Hull(\Crit(\varphi))$ be the \textbf{connected hull} of the critical points; i.e., the smallest closed connected subset of $\Berk$ containing $\Crit(\varphi)$.
		
\begin{thmC}[Totally Ramified Functions]
	Let $\varphi \in k(z)$ be a nonconstant rational function for which there exists a totally ramified point in $\Berk$. Then the ramification locus $\Ram_\varphi$ is connected. In particular, if $\varphi$ is tame, then $\Ram_\varphi = \Hull(\Crit(\varphi))$. 
\end{thmC}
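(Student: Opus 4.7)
If $\varphi$ is purely inseparable, Theorem~B gives $\Ram_\varphi = \Berk$, which is trivially connected, so assume $\varphi$ is separable. The driving observation is that $m_\varphi(x_0) = d$ together with the identity $\sum_{y \in \varphi^{-1}(w)} m_\varphi(y) = d$ forces $\varphi^{-1}(\varphi(x_0)) = \{x_0\}$, so that $\varphi$ is totally folded above $\varphi(x_0)$.

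The plan is to show that every critical point of $\varphi$ lies in the connected component $C_0$ of $\Ram_\varphi$ containing $x_0$. Once this is done, Theorem~A finishes the proof at once: any other component would require at least two weighted critical points, but all of them would already live in $C_0$. To establish this containment, fix a critical point $c \in \Crit(\varphi)$ and consider the unique tree arc $[x_0, c] \subset \Berk$; I would show the whole arc lies in $\Ram_\varphi$. Suppose not, and pick $z$ in the interior of the arc with $m_\varphi(z) = 1$; let $U$ be the component of $\Berk \smallsetminus \{z\}$ containing $x_0$. The $d$-to-$1$ local structure of $\varphi$ near $x_0$, together with $\varphi^{-1}(\varphi(x_0)) = \{x_0\} \subset U$, shows that $\varphi$ carries a sufficiently small neighborhood of $x_0$ onto a neighborhood of $\varphi(x_0)$ as a degree-$d$ map. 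Applying Rivera-Letelier's slope formulas for the transformation of arcs in $\Berk$ under $\varphi$, one forces the tangent directional multiplicity at $z$ in the direction pointing toward $x_0$ to be at least $2$, hence $m_\varphi(z) \geq 2$, a contradiction. The technical heart is precisely this last step: a naive ``$\varphi|_U$ has degree $d$ everywhere on its image'' argument fails because $U$ is typically not saturated under $\varphi$ (one has $\varphi^{-1}(\varphi(U)) \supsetneq U$ in general), so one genuinely needs Rivera-Letelier's directional multiplicity machinery to control how local degrees propagate along the arc.

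For the ``in particular'' assertion, assume $\varphi$ is tame. Then $\Ram_\varphi$ has only finitely many branch points and is therefore a finite graph inside the tree $\Berk$. Connectedness (just proved) together with $\Crit(\varphi) \subseteq \Ram_\varphi$ give $\Hull(\Crit(\varphi)) \subseteq \Ram_\varphi$. For the reverse inclusion, I would argue that every endpoint (leaf) of the finite graph $\Ram_\varphi$ is a classical critical point: a non-classical leaf of type~II would admit, via the local Riemann-Hurwitz formula at that point, at least two distinct ramified tangent directions, contradicting the leaf property, and type~III and type~IV leaves are excluded similarly. Hence $\Ram_\varphi = \Hull(\Crit(\varphi))$.
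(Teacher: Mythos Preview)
Your overall strategy is right and close to the paper's, but the key step is both hand-waved and aimed at the wrong disk. You take $U$ to be the component of $\Berk\smallsetminus\{z\}$ containing $x_0$ and then assert that ``Rivera-Letelier's slope formulas'' force $m_\varphi(z,\vec v_{x_0})\ge 2$. That particular inequality does not drop out directly: from $\varphi^{-1}(\varphi(x_0))=\{x_0\}\subset U$ what you actually get is a statement about the \emph{surplus} multiplicity $s_\varphi(z,\vec v_{x_0})$, and you would then have to use $m_\varphi(z)+\sum_{\vec v}s_\varphi(z,\vec v)=\deg(\varphi)$ to conclude that the surplus in every other direction at $z$ vanishes, and only then flip to the direction of $c$ to finish. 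The paper avoids all of this by looking at the other side from the start: for any $x$ on $(x_0,c)$, let $\BB$ be the disk with boundary $x$ containing $c$ (hence not containing $x_0$). Since $x_0$ is the unique preimage of $\varphi(x_0)$, we have $\varphi(x_0)\notin\varphi(\BB)$, so $\varphi(\BB)\ne\Berk$; then Corollary~\ref{Cor: Nonincreasing} gives $m_\varphi(x)\ge m_\varphi(c)>1$ immediately. This one-line argument also lets the paper take $c$ to be \emph{any} ramified point, proving connectedness of $\Ram_\varphi$ directly rather than via Theorem~A, and simultaneously yields $\Hull(\Crit(\varphi))\subset\Ram_\varphi$.

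For the tame clause, your endpoint analysis is essentially Proposition~\ref{Prop: Endpoints} and is fine; the paper simply invokes the equivalence in Corollary~\ref{Cor: Tame characterization} (tame $\Leftrightarrow$ $\Ram_\varphi\subset\Hull(\Crit(\varphi))$) together with the containment $\Hull(\Crit(\varphi))\subset\Ram_\varphi$ just established.
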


    In a sequel to this paper, we provide a detailed study of the geometry of the ramification locus  with respect to the hyperbolic $\PGL_2(k)$-invariant metric on $\Berk \smallsetminus \PP^1(k)$ \cite{Faber_Berk_RamII_2012}.

	We close with a detailed summary of the contents of the present paper. In Section~\ref{Sec: Conventions} we recall all of the relevant features of $\Berk$ and its endomorphisms. While this section is primarily designed to fix notation, it could also serve as a brief introduction to $\Berk$. In Section~\ref{Sec: Multiplicities} we discuss three notions of multiplicity function. The first is an extension of the algebraic multiplicity $m_\varphi$ on $\PP^1(k)$ to the entire Berkovich projective line $\Berk$. The second is the directional multiplicity, which allows one to accurately count the number of solutions to the equation $\varphi(z) = y$ in a particular open Berkovich disk $U$, provided that $\varphi(U) \neq \Berk$. It can happen that $\varphi(U) = \Berk$, and so we introduce the notion of surplus multiplicity as the defect in this counting problem. The surplus multiplicity of $U$ is very closely tied to the number of critical points contained in $U$. The first two multiplicities are well understood in the literature. This article is the first to focus on the surplus multiplicity in its own right, although it does appear in \cite[Lem.~3.2]{Rivera-Letelier_Periodic_Points_2005}. 
	
	Section~\ref{Sec: Base Change} is devoted to constructing the canonical inclusion $\iota_k^K: \Berk_k \to \Berk_K$ and proving a number of useful properties, including its compatibility with rational functions. The goal of Section~\ref{Sec: Inseparable Reduction}  is to provide a definition of inseparable reduction at an arbitrary point of $\Berk$. We also give an interesting criterion for when a rational function has inseparable reduction at a type~III point. In Section~\ref{Sec: Connected Components}, we prove Theorem~A and a number of other results related to connectedness of the ramification locus. For example, we show that every connected component of $\Ram_\varphi$ meets the convex hull of the critical points. We describe the endpoints and interior points of the ramification locus in Section~\ref{Sec: Ends and Interior}; this includes a proof of Theorem~B and a number of characterizations of tame and locally tame rational functions. Finally, in Section~8 we discuss the locus of total ramification and some of the properties of rational functions for which this locus is nonempty. 	\\


\section{Notation and Conventions}
\label{Sec: Conventions}

\subsection{Non-Archimedean Fields}

	For the duration of this paper, $k$ will denote an algebraically closed field that is complete with respect to a nontrivial non-Archimedean absolute value $|\cdot|$. 
We use the standard notation $k^{\circ} = \{ t \in k : |t| \leq 1 \}$ and $k^{\circ \circ} = \{ t \in k : |t| < 1 \}$ for the valuation ring of $k$ and for its maximal ideal, respectively, and we write $\tilde k = k^\circ / k^{\circ \circ}$ for the residue field. The residue characteristic of $k$ will be denoted~$p$. (Note $p = 0$ is allowed.) The value group of $k$ will be denoted $|k^\times|$; as $k$ is algebraically closed, $|k^\times|$ is a divisible group.
	
	The \textbf{normalized base} associated to $k$ is the constant
	\[
		q_k = \begin{cases}
			e & \text{if $k$ has equicharacteristic $p \geq 0$} \\
			|p|^{-1}  & \text{if $k$ has mixed characteristic}.				
			\end{cases}
	\]
Then $q_k > 1$, and the function $\ord_k(\cdot) = - \log_{q_k} | \cdot |$ is a valuation on $k$. 

	For $a \in k$ and $r \in \RR_{\geq 0}$, write
	\[
		D(a, r)^- = \{ x \in k : |x  - a | < r \} \quad \text{and} \quad  D(a,r) = \{x \in k : |x - a| \leq r\}
	\]
for the \textbf{(classical) open disk} and the \textbf{(classical) closed disk} of radius $r$ about $a$, respectively.

\subsection{The Berkovich Projective Line}

	Here we summarize the definition and main properties of $\Berk$. For the most part we follow the notation and treatment in \cite[\S1--2]{Baker-Rumely_BerkBook_2010}, although much of this material was first presented in \cite{Rivera-Letelier_Espace_Hyperbolique_2003, Rivera-Letelier_Periodic_Points_2005}. See also \cite[Ch.~3]{AWS_2008}. 
	
\subsubsection{The Affine Line}
	
	The Berkovich affine line $\Aff^1 = \Aff^1_k$ is defined to be the set of all multiplicative seminorms on the polynomial algebra $k[T]$ that restrict to the given absolute value on $k$. If $x$ is a seminorm and $f \in k[T]$ is a polynomial, we write $|f(x)|$ for the value of $f$ at $x$. For example, if $a \in k$ and $r \in \RR_{\geq 0}$, write $\zeta_{a,r}$ for the multiplicative seminorm defined by
	\[
		|f(\zeta_{a,r})| = \sup_{b \in D(a, r)} |f(b)|, \qquad f \in k[T].
	\]
Berkovich has classified the points of $\Aff^1$:
	\begin{enumerate}
		\item Type~I. $\zeta_{a,0}$ for some $a \in k$. (Such a point is called a \textbf{classical point}.)
		\item Type~II. $\zeta_{a,r}$ for some $a \in k$ and $r \in |k^\times|$.
		\item Type~III. $\zeta_{a,r}$ for some $a \in k$ and $r \not\in |k^\times|$.
		\item Type~IV. A limit of seminorms $(\zeta_{a_i, r_i})_{i \geq 0}$, where the associated sequence of 
			closed disks $(D(a_i, r_i))_{i \geq 0}$ is descending and has empty intersection. (The field $k$ is
			called \textbf{spherically closed} if no such sequence of closed disks exists.)
	\end{enumerate}
	
	This classification suggests a means for extending the notation to cover type~IV points. Given a decreasing sequence of closed disks $D(\aa, \rr) = (D(a_i, r_i))_{i \geq 0}$, define $\zeta_{\aa, \rr} \in \Aff^1$ to be the seminorm on $k[T]$ given by
	\[
		|f(\zeta_{ \aa, \rr})| = \lim_{i \to \infty} \sup_{b \in D(a_i, r_i)} |f(b)|, \qquad f \in k[T].
	\]
Note that $\zeta_{\aa, \rr} = \zeta_{a,r}$ if $D(\aa, \rr)$ is the constant sequence with term $D(a,r)$. More generally, 
if $\cap_{i \geq 0} D(a_i, r_i) = D(b,s)$ for some $ b\in k$ and $s \in \RR_{\geq 0}$, then one verifies easily that $\zeta_{\aa, \rr} = \zeta_{b,s}$. Moreover, we have the equality of seminorms $\zeta_{\aa,\rr} = \zeta_{\aa', \rr'}$ if and only if the associated sequences $D(\aa, \rr)$ and $D(\aa', \rr')$ are cofinal in each other.
	
	We identify the set of classical points in $\Aff^1$ with $k$ via the injection $a \mapsto \zeta_{a,0}$.  The point $\zeta_{0,1}$ is called the \textbf{Gauss point} because the associated seminorm coincides with the Gauss norm of a polynomial. 

\subsubsection{The Weak Topology}

	The \textbf{weak topology} on $\Aff^1$ is the weakest topology satisfying the following property: for each polynomial $f \in k[T]$, the function $x \mapsto |f(x)|$ is continuous on $\Aff^1$. The space $\Aff^1$ is locally compact, Hausdorff, and uniquely path-connected for the weak topology. 
		
	The injection $k \hookrightarrow \Aff^1$ given by $a \mapsto \zeta_{a,0}$ is a dense homeomorphic embedding relative to the absolute value topology on $k$ and the weak topology on $\Aff^1$. The type~II points of $\Aff^1$ are dense in $\Aff^1$ for the weak topology.
		
	For $a \in k$ and $r \in \RR_{\geq 0}$, the sets 
	\[
		\DD(a, r)^- = \{ x \in \Aff^1 : |(T-a)(x)| < r \} \quad \text{and} \quad  \DD(a,r) = \{x \in \Aff^1 : |(T-a)(x)| \leq r\}
	\]
are the \textbf{(standard) open Berkovich disk} and the \textbf{(standard) closed Berkovich disk} of radius $r$ about $a$, respectively. 
The weak topology on $\Aff^1$ is generated by sets of the form
	\[
		\DD(a,r)^-
			\ \text{and} \ \Aff^1 \smallsetminus \DD(a, r)
	\]
for $a \in k$ and $r \in \RR_{>0}$. 

\subsubsection{The Strong Topology}	
\label{Sec: Strong Topology}

	The affine line $\Aff^1$ admits a partial ordering $\preceq$ defined by  $x \preceq y$ if and only if $|f(x)| \leq |f(y)|$ for all polynomials $f \in k[T]$. For example, $\zeta_{a, r} \preceq \zeta_{b, s}$ if and only if $D(a, r) \subset D(b, s)$. Given $x, y \in \Aff^1$, the least upper bound with respect to the partial ordering is denoted $x \vee y$. It always exists and is unique. Type~I and type~IV points are the minimal elements with respect to~$\preceq$.
	 	
	Define the \textbf{affine diameter} of the point $\zeta_{a,r}$ to be $\diam(\zeta_{a,r}) = r$. More generally, if $(\zeta_{a_i, r_i})$ is a sequence of seminorms corresponding to a type~IV point $x \in \Aff^1$, define $\diam(x) = \lim r_i$. The limit exists since $(r_i)$ is a decreasing sequence, and $\diam(x) > 0$ (else this sequence corresponds to a type~I point). 
	
	The \textbf{small metric} on $\Aff^1$ is defined by
	\[
		d(x,y) = \left[ \diam(x \vee y) - \diam(x) \right] + \left[ \diam(x \vee y) - \diam(y) \right].
	\]
The topology on $\Aff^1$ induced by $d$ is called the \textbf{strong topology}. 	It is strictly finer than the weak topology.  
	
	Define the \textbf{path-distance metric} $\rho$ on the \textbf{Berkovich hyperbolic space} $\HH = \Aff^1 \smallsetminus k$ via the formula
	\benn
		\ba
		\rho(x,y) 
			 &= \log_{q_k} \frac{\diam(x \vee y)}{\diam(x)} +  \log_{q_k} \frac{\diam(x \vee y)}{\diam(y)}. 
		\ea
	\eenn
The restriction of the strong topology to $\HH$ coincides with the metric topology for $\rho$. The space $\HH$ is complete for this metric, but not locally compact. Note that our choice of normalized base $q_k$ gives $\rho(\zeta_{0, q_k}, \zeta_{0,1}) = 1$. 

	The group $\PGL_2(k)$ acts by isometries for the path-distance metric: $\rho(\sigma(x), \sigma(y)) = \rho(x, y)$ for any $x, y \in \HH$ and $\sigma \in \PGL_2(k)$.

\subsubsection{The Projective Line}

	The Berkovich projective line over $k$, denoted $\Berk = \Berk_k$, is given by gluing two copies of $\Aff^1$ along $\Aff^1 \smallsetminus \{0\}$ via the map $T \mapsto 1 / T$. The weak topology on $\Berk$ is induced by this gluing. We write $\{\infty\} = \Berk \smallsetminus \Aff^1$, and the dense homeomorphic embedding $k \hookrightarrow \Aff^1$ extends to $\PP^1(k) \hookrightarrow \Berk$. We also extend the partial ordering $\preceq$ to $\Berk$ by setting $x \preceq \infty$ for every $x \in \Berk$. For $x \preceq x' \in \Berk$, we define the \textbf{closed segment} $[x,x'] = \{y \in \Berk : x \preceq y \preceq x' \}$, and extend this notion to arbitrary pairs $x, x' \in \Berk$ by $[x,x'] = [x, x \vee x'] \cup [x', x \vee x']$. Open and half-open segments can be defined similarly.
	
	The group $\PGL_2(k)$ acts on $\PP^1(k)$, and this action extends functorially to $\Berk$. Moreover, the action preserves the type of a point in $\Berk$, and it is transitive on the set of type~I and type~II points. The image of an open disk $\DD(a,r)^- \subset \Aff^1$ under the action of an element of $\PGL_2(k)$ will be called an \textbf{open Berkovich disk} (and similarly for a \textbf{closed Berkovich disk}). The weak topology on $\Berk$ is generated by sets of the form $\DD(a,r)^-$ and $\Berk \smallsetminus \DD(a, r)$ 
for $a \in k$ and $r \in \RR_{> 0}$. The space $\Berk$ is compact, Hausdorff, and uniquely path-connected for the weak topology.

	We close with the following important property of the strong and weak topologies on $\Berk$:
	
\begin{prop}[{\cite[Lem.~B.18]{Baker-Rumely_BerkBook_2010}}]
\label{Prop: Strong vs. Weak}
	Let $X \subset \Berk$ be a subset. Then $X$ is connected for the weak topology on $\Berk$ if and only if it is connected for the strong topology on $\Berk$. 
\end{prop}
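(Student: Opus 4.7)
The plan is to prove the two implications separately, using the structure of $\Berk$ as an $\RR$-tree in the weak topology.

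The direction strong-connected $\Rightarrow$ weak-connected is immediate: each basic weak-open set, namely a standard open Berkovich disk $\DD(a, r)^-$ or the complement $\Berk \setminus \DD(a, r)$ of a standard closed Berkovich disk, is readily seen to be open in the small metric, hence strongly open. Thus the weak topology is coarser than the strong topology, and any weak-separation of $X$ into nonempty relatively open pieces is also a strong-separation.

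For the converse, the key step is to show that weakly-connected subsets of $\Berk$ are \emph{convex}, meaning $[x, y] \subseteq X$ whenever $x, y \in X$. This rests on the fact that for any $o \in \Berk$, the complement $\Berk \setminus \{o\}$ decomposes into weakly-clopen pieces, one for each tangent direction (``branch'') at $o$. Consequently, if $o \notin X$, then the weakly-connected set $X$ must lie entirely in one branch at $o$. Now given $x, y \in X$ and any interior point $o$ of the segment $[x, y]$, the points $x$ and $y$ lie in different branches at $o$; the only way to avoid a weak-separation of $X$ is therefore $o \in X$, proving convexity.

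Next, one verifies that on any segment $[x, y] \subseteq \Berk$, the weak and strong topologies induce the same subspace topology; both correspond to the compact-interval topology parameterized by the diameter function. Consequently, if $X$ is weakly connected, then for any $x, y \in X$ the entire segment $[x, y]$ lies in $X$ and is also a strongly-continuous path. Thus $X$ is strongly arcwise-connected, hence strongly connected. The main obstacle in this strategy is establishing the weakly-clopen branch decomposition of $\Berk \setminus \{o\}$—especially at type~II points, where the set of branches is parameterized by the infinite set $\PP^1(\tilde k)$, and at type~IV points, where care is needed to identify the unique branch. This is a standard feature of $\Berk$; see, for instance, \cite[Ch.~1--2]{Baker-Rumely_BerkBook_2010}. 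Once it is in hand, the argument above provides a complete proof.
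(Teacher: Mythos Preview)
The paper does not give its own proof of this proposition; it simply records the statement and cites \cite[Lem.~B.18]{Baker-Rumely_BerkBook_2010}. So there is nothing to compare against, and the relevant question is whether your argument stands on its own.

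Your strategy is correct and is essentially the standard one. The easy direction (strong-connected implies weak-connected) is fine as stated. For the other direction, the crucial step---that a weakly connected subset of $\Berk$ is convex---is exactly right, and your justification via the branch decomposition of $\Berk \smallsetminus \{o\}$ is the correct mechanism: each $\BB_o(\vec{v})^-$ is weakly open in $\Berk$, and its complement in $\Berk \smallsetminus \{o\}$ is the union of the remaining branches, hence also open; thus a weakly connected set avoiding $o$ sits in a single branch. Since the two endpoints of a nondegenerate segment through $o$ lie in distinct branches, convexity follows.

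The one place to tighten is the claim that the weak and strong subspace topologies agree on every segment $[x,y]$. Your phrasing ``parameterized by the diameter function'' is fine for segments in $\Aff^1$ whose endpoints are of type~I, II, or III, but you should say a word about segments touching $\infty$ (handle by a $\PGL_2(k)$ change of coordinates, which is a homeomorphism for both topologies) and segments ending at a type~IV point (the diameter still gives a continuous bijection from a compact real interval onto the segment; compactness plus Hausdorffness of the weak topology then forces a homeomorphism, and the small metric $d$ restricts to $|r-s|$ on the parameter, giving the same interval topology). With that small addition, the argument is complete.
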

	
	Consequently, we may speak of the connected components of a subset $X \subset \Berk$ without reference to the topology. 
	
\subsubsection{Tangent vectors}
	
	Let $x \in \Berk$ be a point. Write $T_x$ for the set of connected components of $\Berk \smallsetminus \{x\}$; an element $\vec{v} \in T_x$ will be called a \textbf{tangent vector} at $x$. If we wish to view a connected component $\vec{v} \in T_x$ as a subset of $\Berk$, then we will write it as $\BB_x(\vec{v})^-$. Observe that the weak topology on $\Berk$ is generated by the sets $\BB_x(\vec{v})^-$ as $x$ varies through $\Berk$ and $\vec{v}$ varies through $T_x$. 
	
	The cardinality of the set $T_x$ depends only on the type of the point $x$:
	\begin{enumerate}
		\item Type~I. $T_x$ consists of a single tangent vector.
		\item Type~II. $T_x$ is in 1-to-1 correspondence with elements of $\PP^1(\tilde k)$. 
		\item Type~III. $T_x$ consists of two tangent vectors.
		\item Type~IV. $T_x$ consists of a single tangent vector.
	\end{enumerate} 
In the case of a type~II point $x$, the correspondence between $T_x$ and $\PP^1( \tilde k)$ is non-canonical except when $x = \zeta_{0,1}$. The correspondence $\PP^1(\tilde k) \simarrow T_{\zeta_{0,1}}$ is given by $a \mapsto \vec{a}$, where $\vec{a}$ is the connected component of $\Berk \smallsetminus \{\zeta_{0,1}\}$ all of whose classical points map to $a$ under the canonical reduction map $\PP^1(k) \to \PP^1(\tilde k)$.

\subsection{Rational Functions}

\subsubsection{Generalities}
\label{Sec: General Rational Functions}

	Let $L$ be an algebraically closed field, and let $\varphi \in L(z)$ be a nonconstant rational function. Choose polynomials $f, g \in L[z]$ with no common root such that $\varphi = f / g$. Write $\deg(\varphi) = \max\{ \deg(f), \deg(g)\}$. 
	
	Suppose $x \in \PP^1(L)$ and set $y = \varphi(x)$. Select $\sigma_1, \sigma_2 \in \PGL_2(L)$ such that $\sigma_1(0) = x$ and $\sigma_2(y) = 0$, and define $\psi = \sigma_2 \circ \varphi \circ \sigma_1$. The \textbf{multiplicity} of $\varphi$ at $x$ is defined to be the integer $m_\varphi(x) = \ord_{z = 0} \psi(z)$. Evidently $1 \leq m_\varphi(x) \leq \deg(\varphi)$. The \textbf{weight} of $\varphi$ at $x$ is defined as $w_\varphi(x) = \ord_{z = 0} \psi'(z)$. If $\varphi'(z) \equiv 0$, we set $w_\varphi(x) = +\infty$. The weight and multiplicity at $x$ are  independent of the choice of $\sigma_1$ and $\sigma_2$. 
	
	If $\varphi(x) = y$ with $x, y \neq \infty$, then one verifies that
		\[
			m_\varphi(x) = \ord_{z = x} \left(\varphi(z) - y\right) \qquad 
			w_\varphi(x) = \ord_{z = x} \left(\varphi'(z) \right).
		\] 
As an immediate consequence, we obtain the following formula for each $y \in \PP^1(L)$:
	\[
		\sum_{\substack{x \in \PP^1(L) \\ \varphi(x) = y}} m_\varphi(x) = \deg(\varphi).
	\]

\begin{remark}
	In some of the literature, the multiplicity $m_\varphi(x)$ is referred to as the ``ramification index'' or as the ``local degree.''  The weight $w_\varphi(x)$ is a non-standard terminology special to this paper; it is referred to as the ``multiplicity'' of a critical point in most of the literature. As our focus is on certain multiplicity functions, we have chosen an alternative terminology to avoid confusion.
\end{remark} 
	
	Let $p$ be the characteristic of $L$. The weight and multiplicity of a point are related by
	\[
		w_\varphi(x)   \begin{cases} = m_\varphi(x) - 1 & \text{if $p \nmid m_\varphi(x)$} \\
						 > m_\varphi(x)-1 & \text{if $p \mid m_\varphi(x)$}.
					\end{cases}
	\]
We say that $\varphi$ is \textbf{ramified} (resp. \textbf{unramified}) at $x$ if $m_\varphi(x) > 1$ (resp. $m_\varphi(x) = 1$).  If $p \mid m_\varphi(x)$, we say that $\varphi$ is \textbf{wildly ramified} at $x$; otherwise $\varphi$ is \textbf{tamely ramified} at $x$. 	A point $x$ with positive weight is called a \textbf{critical point} of $\varphi$; the above relations between weights and multiplicities show that $\varphi$ is ramified at $x$ if and only if $x$ is a critical point. We write $\Crit(\varphi)$ for the set of critical points of $\varphi$. 

	If $L$ has characteristic $p > 0$, a rational function $\varphi \in L(z)$ is called \textbf{inseparable} if $\varphi(z) = \psi(z^p)$ for some  rational function $\psi$. Otherwise $\varphi$ is said to be \textbf{separable}. (Equivalently, $\varphi$ is separable if and only if the extension of fields $L(z) / L(\varphi(z))$ is separable.)

	With this notation, the Hurwitz Formula may be written in the following way:

\begin{prop}[Hurwitz Formula]
\label{Prop: Hurwitz}
	Let $\varphi \in L(z)$ be a nonconstant rational function. The collection of weights for $\varphi$ are related by
	\[
		\sum_{x \in \PP^1(L)} w_\varphi(x) = 
			\begin{cases}
				2\deg(\varphi) - 2 & \text{if $\varphi$ is separable} \\
				+\infty & \text{if $\varphi$ is inseparable.}
			\end{cases}
	\]
In particular, a nonconstant separable rational function has at most $2\deg(\varphi) - 2$ distinct critical points. 
\end{prop}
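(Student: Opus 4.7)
For inseparable $\varphi$, the writing $\varphi(z) = \eta(z^p)$ gives $\varphi'(z) \equiv 0$; by the chain rule this propagates to $(\sigma_2 \circ \varphi \circ \sigma_1)'(z) \equiv 0$ for every $\sigma_1, \sigma_2 \in \PGL_2(L)$, so the stated convention forces $w_\varphi(x) = +\infty$ for every $x \in \PP^1(L)$, and the sum is $+\infty$. For separable $\varphi$, by contrast, $\varphi'$ is a nonzero rational function on $\PP^1_L$, so its divisor has degree zero:
$$\sum_{x \in \PP^1(L)} \ord_x(\varphi') = 0.$$
The plan is to compare $\ord_x(\varphi')$ to $w_\varphi(x)$ at each point by a local computation and then sum.

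Four local cases arise, according to whether $x$ and $\varphi(x)$ are finite. (i) If $x, \varphi(x) \in L$, then taking $\sigma_1, \sigma_2$ to be translations in the definition of $w_\varphi$ yields $w_\varphi(x) = \ord_x(\varphi')$ directly. (ii) If $x \in L$ is a pole of multiplicity $e = m_\varphi(x)$, write $\varphi(z) = (z-x)^{-e} h(z)$ with $h$ rational, regular, and nonvanishing at $x$; then $\psi(z) = 1/\varphi(z+x) = z^e/h(z+x)$, and direct expansions of $\varphi'$ and $\psi'$ give $w_\varphi(x) = \ord_x(\varphi') + 2e$. (iii) If $x = \infty$ and $\varphi(\infty)$ is finite, the substitution $u = 1/z$ gives $\psi'(u) = -\varphi'(1/u)/u^2$, hence $w_\varphi(\infty) = \ord_\infty(\varphi') - 2$. (iv) If $x = \infty$ and $\varphi(\infty) = \infty$ with multiplicity $e$, combining the two substitutions gives $w_\varphi(\infty) = \ord_\infty(\varphi') - 2 + 2e$.

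Each of (ii)--(iv) contributes an exceptional term to the global sum. Summing the four identities over $\PP^1(L)$ and using the basic relation $\sum_{x \in \varphi^{-1}(\infty)} m_\varphi(x) = d = \deg \varphi$, the total exceptional contribution is $-2d + 2$, and one obtains
$$0 = \sum_{x \in \PP^1(L)} w_\varphi(x) - 2d + 2,$$
hence $\sum_x w_\varphi(x) = 2d - 2$. The main subtle step is case (ii) when $p \mid e$: then the ``expected'' leading term $-e(z-x)^{-e-1} h(z)$ of $\varphi'$ vanishes, and one must carefully check that the compensating contribution from $h'$ appears with matching order on both sides of $w_\varphi(x) = \ord_x(\varphi') + 2e$. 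The remaining computations are routine derivative manipulations.
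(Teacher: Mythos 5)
Your proof is correct, but it takes a genuinely different route from the paper's argument for the separable case. The paper changes coordinates on source and target so that $\infty$ is a non-critical fixed point of $\varphi$; with that normalization the Wronskian $\wronsk_\varphi = f'g - fg'$ is a polynomial of exact degree $2\deg(\varphi) - 2$, and at each affine point $x$ one has $\ord_x(\wronsk_\varphi) = w_\varphi(x)$, so the formula drops out by counting roots of a single polynomial. You instead work invariantly: you use that $\operatorname{div}(\varphi')$ has degree zero on $\PP^1_L$ and then track the discrepancy $w_\varphi(x) - \ord_x(\varphi')$ point by point. Your four local formulas are right, and the bookkeeping does close up: only finite poles and $\infty$ contribute corrections, and the total is $2\sum_{\varphi(x) = \infty} m_\varphi(x) - 2 = 2d - 2$, giving $\sum_x w_\varphi(x) = 2d - 2$. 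The trade-off is that the paper's normalization collapses everything into a single degree count, while your approach avoids choosing coordinates at the cost of a case analysis (and note cases (iii) and (iv) are mutually exclusive, so really there are three). The wild case $p \mid e$ you flag in (ii) does resolve cleanly --- writing $\varphi = (z-x)^{-e}h$, the term $\ord_x(h')$ appears with the same coefficient in both $\ord_x(\varphi')$ and $w_\varphi(x)$, so the identity $w_\varphi(x) = \ord_x(\varphi') + 2e$ holds without modification --- so there is no gap.
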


	For a nonconstant rational function $\varphi \in L(z)$, choose polynomials $f, g$ with no common root such that $\varphi = f / g$. This choice is unique up to a common nonzero factor in $L$. The \textbf{Wronskian} of $\varphi = f / g \in L(z)$ is defined to be
		\[
			\wronsk_\varphi = f'g - fg' \in L[z].
		\]
It is a polynomial of degree at most $2\deg(\varphi)-2$ whose roots are precisely the affine critical points of $\varphi$. (If one wants to recover all critical points, then one should work with the homogeneous Wronskian $Y^{2\deg(\varphi)-2}\wronsk_\varphi(X/Y) \in L[X,Y]$.) The Wronskian depends on the choice of representation $f / g$, although we suppress this from the notation. Note that the Hurwitz Formula may be proved by counting roots of the Wronskian with appropriate weights.

	To close this section, we derive an explicit formula for the Wronskian $\wronsk_\varphi$ in terms of the coefficients of $f$ and $g$. Let $d = \deg(\varphi)$ and write 
	\[
		f(z) = a_dz^d + a_{d-1}z^{d-1} + \cdots + a_0, \qquad 
		g(z) = b_dz^d + b_{d-1}z^{d-1} + \cdots + b_0,
	\]
for some coefficients $a_i, b_j \in L$. Let us make the convention that $a_i = b_i = 0$ if $i < 0$ or $i > d$. Then the Wronskian of $\varphi = f / g$ is given by 
	\benn
		\wronsk_\varphi(z) = f'(z)g(z) - f(z)g'(z) = \sum_{i \geq 0} \sum_{j \geq 0} (ia_i b_j - j a_i b_j) z^{i+j-1}.
	\eenn
Making the change of variable $j \mapsto j-i+1$ gives 
	\be
	\label{Eq: Explicit Wronskian}
		\wronsk_\varphi(z) 
			= \sum_{j \geq 0} \left\{ \sum_{i \geq 0} (2i - j - 1) a_i b_{j + 1 - i} \right\} z^j.
	\ee


\subsubsection{Rational functions over non-Archimedean fields}
\label{Sec: Basics non-Archimedean}

	A rational function $\varphi \in k(z)$, viewed as an endomorphism of $\PP^1(k)$, extends functorially to an endomorphism of $\Berk$. By abuse of notation, we denote the extension by $\varphi$ as well. The map $\varphi : \Berk \to \Berk$ is continuous for both the weak and strong topologies. 
	
	Intuitively, the extension $\varphi : \Berk \to \Berk$ reflects the mapping properties of open disks in $\PP^1(k)$. More precisely, if $\varphi$ is nonconstant, we can describe the extension of $\varphi$ to type~II points in the following concrete fashion. Let $S \subset k^\circ$ be a complete collection of coset representatives for $\tilde k = k^\circ / k^{\circ \circ}$. The closed disk $D(0,1)$ is a disjoint union of open disks $D(b,1)^-$ as $b$ varies through $S$. For all but finitely many $b \in S$, the image $\varphi(D(b,1)^-)$ is an open disk $D(\varphi(b), s)^-$ for some $s \in |k^\times|$.  For any such choice of $b$, we have $\varphi(\zeta_{0,1}) = \zeta_{\varphi(b),s}$. For an arbitrary type~II point $\zeta_{a,r}$, choose $\sigma \in \PGL_2(k)$ so that $\sigma(\zeta_{0,1}) = \zeta_{a,r}$, and apply the preceding discussion to the rational function $\varphi \circ \sigma$.  

	Let $\varphi \in k(z)$ be a nonconstant rational function. We may write $\varphi = f / g$ for polynomials $f, g \in k[z]$ with no common root. If $f, g \in k^\circ[z]$ and if the maximum absolute value of the coefficients of $f$ and $g$ is~1, then we say $\varphi$ is  \textbf{normalized}.  

	Given $\varphi \in k(z)$, we may always choose polynomials $f, g \in k^{\circ}[z]$ so that $\varphi = f / g$ is normalized. (It can be accomplished by dividing the numerator and denominator of an arbitrary representation by a judicious choice of nonzero element of $k$.) This choice of $f$ and $g$ is unique up to simultaneous multiplication by an element in $k$ with absolute value~1. Write $\tilde f$ and $\tilde g$ for the images of $f$ and $g$ in $k^\circ[z] / k^{\circ \circ}[z]$, respectively. The \textbf{reduction} of $\varphi$ is given by 
	\[
		\widetilde{\varphi}(z) = \begin{cases}
				 \tilde f / \tilde g & \text{if } g \not\in k^{\circ \circ}[z] \\
				 \infty & \text{if } g \in k^{\circ \circ}[z].
			\end{cases}
	\]
The degree of $\widetilde{\varphi}$ is independent of the choice of normalized representation $\varphi = f / g$. (By convention, we set $\deg(\infty) =  0$.) We say that $\varphi$ has \textbf{constant reduction} (resp. \textbf{nonconstant reduction}) if the degree of $\widetilde{\varphi}$ is zero (resp. positive). 
 
\begin{prop}
	Let $\varphi \in k(z)$ be a nonconstant rational function, and write $\varphi = f / g$ in normalized form. Then $\varphi$ has nonconstant reduction if and only if $\varphi(\zeta_{0,1}) = \zeta_{0,1}$. 
\end{prop}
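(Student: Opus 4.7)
The plan is to compute $\varphi(\zeta_{0,1})$ as a multiplicative seminorm on $k[T]$ and match it against the Gauss seminorm. The crucial reduction is that, since $k$ is algebraically closed, every polynomial factors into linear factors, so two multiplicative seminorms on $k[T]$ restricting to $|\cdot|$ on $k$ agree if and only if they agree on all polynomials of the form $T - y$ with $y \in k$. This reduces the question to comparing $|(T-y)(\varphi(\zeta_{0,1}))|$ with $|(T-y)(\zeta_{0,1})| = \max(|y|,1)$ for each $y \in k$.

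Next I would observe that the pullback $(T-y)\circ\varphi$ equals the rational function $(f - yg)/g$, so, using that $|\cdot(\zeta_{0,1})|$ on $k[z]$ coincides with the Gauss norm $|\cdot|_{\mathrm{Gauss}}$, we get
\[
  |(T-y)(\varphi(\zeta_{0,1}))| = \frac{|f - yg|_{\mathrm{Gauss}}}{|g|_{\mathrm{Gauss}}}.
\]
For the forward direction I would note that $\deg\widetilde\varphi \geq 1$ forces both $\tilde g \neq 0$ (so $|g|_{\mathrm{Gauss}} = 1$) and $\tilde f \not\equiv \tilde y\,\tilde g$ for every $\tilde y \in \tilde k$ (otherwise $\widetilde\varphi$ would be the constant $\tilde y$). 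A short case split on $|y| \le 1$ versus $|y| > 1$ then gives $|f - yg|_{\mathrm{Gauss}} = \max(|y|,1)$: in the first case the reduction $\tilde f - \tilde y \tilde g$ is nonzero, and in the second case the $-yg$ summand dominates. Either way we recover $|(T-y)(\varphi(\zeta_{0,1}))| = |(T-y)(\zeta_{0,1})|$, delivering $\varphi(\zeta_{0,1}) = \zeta_{0,1}$.

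For the converse I would argue by contrapositive, exhibiting a single $y$ witnessing disagreement when $\widetilde\varphi$ is constant. If $\tilde g = 0$, normalization forces $|f|_{\mathrm{Gauss}} = 1$ and taking $y = 0$ yields $|T(\varphi(\zeta_{0,1}))| = 1/|g|_{\mathrm{Gauss}} > 1$; if $\widetilde\varphi = c \in \tilde k$, lifting $c$ to $c_0 \in k^\circ$ makes $\tilde f - \tilde c_0 \tilde g = 0$, so $|f - c_0 g|_{\mathrm{Gauss}} < 1 = |(T-c_0)(\zeta_{0,1})|$. The main obstacle is mostly bookkeeping rather than conceptual: carefully translating between the functorial Berkovich action of $\varphi$ on seminorms and the coefficient-level Gauss-norm computation on $k[z]$, and justifying the reduction to linear polynomials. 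Once that framework is in place, the argument collapses to tracking which coefficients survive reduction modulo $k^{\circ\circ}$.
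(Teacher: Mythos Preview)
Your argument is correct. The reduction to linear polynomials via multiplicativity and algebraic closedness is clean, and your case analysis on $|y|$ and on the shape of $\widetilde\varphi$ goes through as stated. The only place to be slightly more explicit is the $\tilde g = 0$ case: you should remark that $\zeta_{0,1}$ is a genuine norm on $k[z]$, so it extends multiplicatively to $k(z)$ and your displayed formula for $|(T-y)(\varphi(\zeta_{0,1}))|$ remains valid; in particular $\varphi(\zeta_{0,1})$ lands in $\Aff^1$ rather than at $\infty$, and your witness $y=0$ really does detect the discrepancy.

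By contrast, the paper does not argue from scratch: it simply cites Lemma~2.17 of Baker--Rumely and observes that the one case not covered there ($\widetilde\varphi = \infty$) can be handled by the substitution $\varphi(z) \mapsto 1/\varphi(1/z)$. So your route is genuinely different and more self-contained. What the citation buys is brevity and the ability to lean on an established reference; what your direct computation buys is that the reader sees exactly why the Gauss norm interacts well with the normalization condition, and no outside source is needed. Your approach also makes transparent the role of the hypothesis that $k$ is algebraically closed, which in the cited version is buried inside Baker--Rumely's setup.
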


\begin{proof}
	This is essentially Lemma~2.17 of \cite{Baker-Rumely_BerkBook_2010}. As the point at infinity plays a distinguished role in much of their theory, they do not treat the case in which $\varphi$ has constant reduction with value $\infty$. This issue can be remedied by replacing $\varphi(z)$ with $1 / \varphi(1/z)$. 
\end{proof}

	Let $x$ be a point of $\Berk$ and $\vec{v}$ a tangent direction at $x$. Then for every $y \in \BB_x(\vec{v})^-$ sufficiently close to $x$, the image segment $\varphi((x,y))$ does not contain $\varphi(x)$, and hence it lies entirely in a single connected component of $\Berk \smallsetminus \{\varphi(x)\}$. In this way, $\varphi$ determines a surjective map $\varphi_* : T_x \to T_{\varphi(x)}$ \cite[Cor.~9.20]{Baker-Rumely_BerkBook_2010}. We have already seen that $T_{\zeta_{0,1}}$ is canonically identified with $\PP^1(\tilde k)$. If $\varphi(\zeta_{0,1}) = \zeta_{0,1}$, then under this identification we have $\widetilde{\varphi} = \varphi_*$.


\section{Multiplicity Functions}
\label{Sec: Multiplicities}

\subsection{Extending $m_\varphi$ to $\Berk$}

	Here we describe an extension of the multiplicity function $m_\varphi$ on $\PP^1(k)$ to the Berkovich projective line $\Berk$, where $k$ is a non-Archimedean field. There are a number of equivalent ways to do this; see  \cite[\S9.1]{Baker-Rumely_BerkBook_2010}, \cite[\S6.3.1]{Berkovich_Etale_1993}, and \cite[\S2.2]{Favre_Rivera-Letelier_Ergodic_2010}. The definition is relatively unimportant for our purposes in this paper (although we give one for completeness); instead, we rely on various characterizations and properties of the multiplicity function to be recalled below. 
	
	The most direct definition of the multiplicity function is as follows. Let $k$ be a non-Archimedean field, let $\varphi \in k(z)$ be a nonconstant rational function, and let $\OO_{\Berk}$ be  the analytic structure sheaf on $\Berk$. Then $\varphi_*\OO_{\Berk}$ is a locally free $\OO_{\Berk}$ module. The \textbf{multiplicity} of $\varphi$ at $x \in \Berk$ is defined as 
	\[
		m_\varphi(x) = \mathrm{rk}_{\OO_{\Berk, y}} (\varphi_*\OO_{\Berk})_x
			= \mathrm{rk}_{\OO_{\Berk, y}} \OO_{\Berk, x}, \qquad y = \varphi(x).
	\]

	More intuitively, we have the following topological characterization that appears in the work of Rivera-Letelier. 
It will be the first instance of many in which we want to count a set of points ``with multiplicities.'' To be precise, if $X \subset \Berk$ is a set, then to count $X$ \textbf{with multiplicities} means to compute the quantity
		$\#X = \sum_{x \in X} m_\varphi(x)$. 

\begin{prop}[{\cite[Cor.~9.17]{Baker-Rumely_BerkBook_2010}}]
\label{Prop: Top Characterization}
	For each $x \in \Berk$ and for each sufficiently small $\varphi$-saturated neighborhood $U$ of $x$ (i.e., $U$ is a connected component of $\varphi^{-1}(\varphi(U))$), the multiplicity $m_\varphi(x)$ is equal to $\#U \cap \varphi^{-1}(\{b\})$ for each $b \in \varphi(U) \cap \PP^1(k)$. 
\end{prop}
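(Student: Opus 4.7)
The plan is to combine the algebraic definition of $m_\varphi(x)$ as a stalk rank with the fact that a rational function preserves the type of a point, thereby reducing the claimed topological count to the classical algebraic multiplicity formula.

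Set $y = \varphi(x)$. Since $\varphi : \Berk \to \Berk$ is finite, $\varphi_*\OO_{\Berk}$ is a locally free $\OO_{\Berk}$-module, so on a sufficiently small neighborhood $V$ of $y$, this sheaf trivializes and $\varphi^{-1}(V)$ decomposes as a disjoint union $\bigsqcup U_i$ of $\varphi$-saturated open sets, each mapping finitely to $V$ of constant degree $d_i$. Taking $U = U_i$ to be the component containing $x$, the definition of multiplicity as $\mathrm{rk}_{\OO_{\Berk,y}} \OO_{\Berk,x}$ gives $d_U = m_\varphi(x)$. So we must show that for each classical $b \in V \cap \PP^1(k)$ the fiber $U \cap \varphi^{-1}(b)$, counted with multiplicities, has cardinality $d_U$.

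Next, because $\varphi$ preserves the type of points (classical points map to classical points and vice versa), the preimage $U \cap \varphi^{-1}(b)$ consists entirely of classical points. For each such $z$, the local length of the scheme-theoretic fiber of $\varphi|_U$ over $b$ at $z$ is $\ord_{w = z}(\varphi(w) - b) = m_\varphi(z)$, which is exactly the classical algebraic multiplicity recalled in Section~\ref{Sec: General Rational Functions}. Since $\varphi|_U : U \to V$ is a finite flat cover of degree $d_U$, the total length of any scheme-theoretic fiber is $d_U$, so summing contributions from all classical preimages in $U$ gives
\[
    \sum_{z \in U \cap \varphi^{-1}(\{b\})} m_\varphi(z) \;=\; d_U \;=\; m_\varphi(x),
\]
which is the claimed equality.

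The main obstacle is justifying the first paragraph, namely the existence of arbitrarily small $\varphi$-saturated neighborhoods $U$ of $x$ on which $\varphi$ is a finite cover of constant degree. For classical $x$ this follows easily from the algebraic picture on $\PP^1_k$, but at a point of type~II, III, or IV this relies on the local structure theory for finite morphisms of Berkovich analytic curves and the explicit mapping properties of $\varphi$ on tangent vectors (as sketched in Section~\ref{Sec: Basics non-Archimedean})---essentially the content of Rivera-Letelier's work, which lies at the technical heart of the cited result in Baker-Rumely.
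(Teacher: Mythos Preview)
The paper does not supply its own proof of this proposition: it is quoted verbatim as \cite[Cor.~9.17]{Baker-Rumely_BerkBook_2010} and used as a black box thereafter. So there is nothing to compare your argument against within the paper itself.

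That said, your sketch is a reasonable outline of the standard argument and is essentially the one carried out in Baker--Rumely (building on Rivera-Letelier). Two small points are worth tightening. First, local freeness of $\varphi_*\OO_{\Berk}$ near $y$ does not by itself give the decomposition of $\varphi^{-1}(V)$ into saturated pieces of constant degree; you also need that the fiber $\varphi^{-1}(y)$ is finite and that $V$ can be shrunk so that distinct preimages of $y$ lie in distinct components of $\varphi^{-1}(V)$. Only then is $x$ the unique preimage of $y$ in $U$, so that the stalk of $(\varphi|_U)_*\OO_U$ at $y$ is exactly $\OO_{\Berk,x}$ and $d_U = m_\varphi(x)$. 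Second, you correctly flag in your final paragraph that the real content---producing such saturated neighborhoods and verifying the finite-flat picture at non-classical points---is precisely what Baker--Rumely establish, so your argument is ultimately a reduction to the cited source rather than an independent proof.
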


	Intuitively, this says that each classical point near $\varphi(x)$ has $m_\varphi(x)$ pre-images when counted with multiplicities. More generally, it is true that if $x \in \Berk$ has multiplicity $m = m_\varphi(x)$, then the map $\varphi$ is locally $m$-to-1 in a neighborhood of $x$, provided that we count with multiplicities. The function $m_\varphi: \Berk \to \{1, \ldots, \deg(\varphi)\}$ is sometimes called the ``local degree function'' for this reason. 
	
\begin{define} 
The \textbf{(Berkovich) ramification locus} of a nonconstant rational function $\varphi \in k(z)$ is the set
	\[
		\Ram_\varphi = \{x \in \Berk : m_\varphi(x) > 1 \}.
	\]
\end{define}

\begin{remark}
	We call an arbitrary point $x \in \Ram_\varphi$ a \textbf{ramified point}, while we reserve the term ``critical point'' for the type~I points in $\Ram_\varphi$. (A different convention is used in \cite{Trucco_Tame_Polynomials_2012}.)
\end{remark}

	A rational function $\varphi$ of degree~1 is an automorphism. Hence $m_\varphi \equiv1$ on $\Berk$, so that $\Ram_\varphi$ is empty. A rational function $\varphi$ with $\deg(\varphi) \geq 2$ has a critical point --- i.e., a classical point of multiplicity at least~2 --- and so $\Ram_\varphi$ is nonempty. 

\begin{prop}[{\cite[Prop.~9.28]{Baker-Rumely_BerkBook_2010}}]
\label{Prop: Multiplicity Properties}
	Let $\varphi \in k(z)$ be a nonconstant rational function. The multiplicity function $m_\varphi: \Berk \to \{1, \ldots, \deg(\varphi)\}$ enjoys the following properties. 
\begin{enumerate}
	\item\label{Item: Semicontinuity} 
		$m_\varphi$ is upper semicontinuous with respect to the weak topology. 
		That is, the set \newline $\{x \in \Berk : m_\varphi(x) \geq i \}$ is weakly closed 
		in $\Berk$ for each $i = 1, 2, \ldots, \deg(\varphi)$. 

	\item\label{Item: Injective} The map $\varphi: \Berk \to \Berk$ is locally injective at $a$ with respect to the
		weak topology if $m_\varphi(a) = 1$. The converse holds if $\varphi$ is separable. 
	
	\item\label{Item: Multiplicativity} 
		If $\psi(z)$ is another nonconstant rational function, then 
		\[
			m_{\psi \circ \varphi}(x) = m_\psi(\varphi(x))\cdot m_\varphi(x)  \quad \text{for all $x \in \Berk$.}
		\]
\end{enumerate}
\end{prop}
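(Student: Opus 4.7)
The plan is to deduce all three properties from the topological characterization in Proposition~\ref{Prop: Top Characterization}, which gives, for each $x \in \Berk$, a $\varphi$-saturated neighborhood $U$ such that $m_\varphi(x) = \#\,U \cap \varphi^{-1}(\{b\})$ (counted with multiplicities) for every classical $b \in \varphi(U)$.

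For upper semicontinuity (1), I would fix $x$, choose such a saturated neighborhood $U$, and verify that $m_\varphi(y) \leq m_\varphi(x)$ for every $y \in U$. Given $y \in U$, pick a $\varphi$-saturated neighborhood $V \subseteq U$ of $y$ and any classical point $b \in \varphi(V) \subseteq \varphi(U)$. Since $V \cap \varphi^{-1}(b) \subseteq U \cap \varphi^{-1}(b)$, counting with multiplicities gives $m_\varphi(y) \leq m_\varphi(x)$, so $\{y : m_\varphi(y) \geq i\}$ is closed for every $i$. For multiplicativity (3), I would choose a $\psi$-saturated neighborhood $V$ of $\varphi(x)$ contained in $\varphi(U)$ (possible by shrinking) and let $W$ be the connected component of $\varphi^{-1}(V)$ containing $x$; then $W$ is saturated for both $\varphi$ and $\psi \circ \varphi$. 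For any classical $c \in \psi(V) \cap \PP^1(k)$, rewrite
\[
	\#\bigl(W \cap (\psi \circ \varphi)^{-1}(c)\bigr)
		= \sum_{b \in V \cap \psi^{-1}(c)} \#\bigl(W \cap \varphi^{-1}(b)\bigr).
\]
By Proposition~\ref{Prop: Top Characterization} applied to $\varphi$, each inner term equals $m_\varphi(x)$; the number of summands is $m_\psi(\varphi(x))$; and applying Proposition~\ref{Prop: Top Characterization} to $\psi \circ \varphi$ on the left yields $m_{\psi \circ \varphi}(x)$.

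Part (2) is the subtlest. If $m_\varphi(a) = 1$, then in a saturated neighborhood $U$ of $a$ every classical point of $\varphi(U)$ has a unique simple pre-image in $U$; I would combine this with density of classical points and continuity of $\varphi$ to conclude that $\varphi|_U$ is injective, appealing as needed to the structure of $\varphi$ as a finite analytic morphism (so that the induced surjection $\varphi^{-1}(y) \cap U \twoheadrightarrow \{y\}$ cannot split a single classical fiber). For the converse under separability: if $m_\varphi(a) \geq 2$, Proposition~\ref{Prop: Hurwitz} ensures $\varphi$ has only finitely many classical critical points, so a generic classical $b$ near $\varphi(a)$ has $m_\varphi(a) \geq 2$ \emph{distinct} simple pre-images in any saturated neighborhood of $a$, contradicting local injectivity.

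The main obstacle is the implication ``$m_\varphi(a) = 1 \Rightarrow$ local injectivity'' in (2), since Proposition~\ref{Prop: Top Characterization} directly controls only classical pre-images, whereas injectivity must be established on all of $U$, including non-classical points. Overcoming this requires invoking the rigidity of $\varphi$ as a finite analytic map beyond the purely topological statement of Proposition~\ref{Prop: Top Characterization}; the remaining parts are essentially bookkeeping with saturated neighborhoods.
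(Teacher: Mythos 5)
The paper itself gives no proof of this proposition: it is cited verbatim to Baker--Rumely, Prop.~9.28, with the remark immediately following it observing that part~(2) is proved there under the hypothesis of characteristic zero and that the same argument works for separable $\varphi$. Your plan of deriving all three parts directly from the topological characterization (Proposition~\ref{Prop: Top Characterization}) is therefore a genuinely different, self-contained route. Part~(1) goes through cleanly as you describe, but both~(2) and~(3) need repair.

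For part~(3), the displayed identity
\[
	\#\bigl(W \cap (\psi \circ \varphi)^{-1}(c)\bigr)
		= \sum_{b \in V \cap \psi^{-1}(c)} \#\bigl(W \cap \varphi^{-1}(b)\bigr)
\]
is false whenever some $b \in V \cap \psi^{-1}(c)$ satisfies $m_\psi(b) > 1$: the left side counts the classical pre-images $w$ with weight $m_{\psi\circ\varphi}(w)$, while the right side counts them with weight $m_\varphi(w)$, and the factor $m_\psi(b)$ is lost. To fix this, either weight each summand by $m_\psi(b)$ (which requires separately invoking multiplicativity of the algebraic multiplicity at classical points, an elementary fact from \S\ref{Sec: General Rational Functions}), or choose the classical point $c$ to lie outside the finite set $\psi(\Crit(\psi))$, so that every $b$ in the fiber is simple and the weights collapse to~$1$. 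Either way the argument closes, but as written the bookkeeping is not correct.

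For part~(2), you rightly identify the forward direction as the substantive one, but ``appeal to the rigidity of $\varphi$ as a finite analytic morphism'' names the kind of fact needed without producing an argument, and this is a genuine gap. The specific ingredient that does the job is openness of the map $\varphi : \Berk \to \Berk$. Granting that, the forward direction is a short separation argument: if $m_\varphi(a)=1$ and $U$ is a small $\varphi$-saturated neighborhood on which the topological characterization holds, and if $y_1 \neq y_2 \in U$ had $\varphi(y_1)=\varphi(y_2)$, then Hausdorffness supplies disjoint open $V_1 \ni y_1$, $V_2 \ni y_2$ inside $U$; openness makes $\varphi(V_1) \cap \varphi(V_2)$ a nonempty open set, which by density contains a classical $b$; and then $b$ has two distinct pre-images in $U$, contradicting $m_\varphi(a)=1$. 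Until openness is named and used, the forward implication is not proved. The converse, by contrast, is fine once you note that the Hurwitz formula (Proposition~\ref{Prop: Hurwitz}) is exactly what guarantees the critical set is finite in the separable case, so that ``generic'' classical $b$ exist.
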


\begin{remark}
	Statements~\eqref{Item: Semicontinuity} and~\eqref{Item: Injective} are also true for the strong topology. 
\end{remark}

\begin{remark}
	Part~\eqref{Item: Injective} of the proposition is proved in \cite{Baker-Rumely_BerkBook_2010} under the hypothesis that the characteristic of $k$ is zero, but their proof applies \textit{mutatis mutandis} if $\varphi$ is separable. See also \cite[\S2]{Favre_Rivera-Letelier_Ergodic_2010}.
\end{remark}

\begin{cor}
\label{Cor: Coord Change}
	Let $\varphi \in k(z)$ be a nonconstant rational function, and let $\sigma_1, \sigma_2 \in \mathrm{PGL}_2(k)$. Set $\psi = \sigma_2 \circ \varphi \circ \sigma_1$. Then $\Ram_\psi = \sigma_1^{-1}(\Ram_\varphi)$. 
\end{cor}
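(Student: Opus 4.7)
The plan is to reduce everything to the multiplicativity property stated in Proposition~\ref{Prop: Multiplicity Properties}\eqref{Item: Multiplicativity}. The key observation is that any $\sigma \in \mathrm{PGL}_2(k)$ is a degree-$1$ rational function, so its multiplicity function satisfies $m_\sigma \equiv 1$ on all of $\Berk$ (the image of $m_\sigma$ is contained in $\{1,\ldots,\deg(\sigma)\}=\{1\}$). Thus composing with $\sigma$ on either side does not alter the pointwise multiplicity.

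More precisely, I would apply multiplicativity twice. For any $x \in \Berk$, first compute
\[
	m_{\varphi \circ \sigma_1}(x) = m_\varphi(\sigma_1(x)) \cdot m_{\sigma_1}(x) = m_\varphi(\sigma_1(x)),
\]
and then
\[
	m_\psi(x) = m_{\sigma_2 \circ (\varphi \circ \sigma_1)}(x)
		= m_{\sigma_2}\bigl(\varphi(\sigma_1(x))\bigr) \cdot m_{\varphi \circ \sigma_1}(x)
		= m_\varphi(\sigma_1(x)).
\]
(One should briefly note that $\varphi \circ \sigma_1$ and $\sigma_2 \circ \varphi \circ \sigma_1$ are nonconstant rational functions, so the hypotheses of Proposition~\ref{Prop: Multiplicity Properties} apply.)

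Finally, unwinding the definition of the ramification locus,
\[
	x \in \Ram_\psi
		\iff m_\psi(x) > 1
		\iff m_\varphi(\sigma_1(x)) > 1
		\iff \sigma_1(x) \in \Ram_\varphi
		\iff x \in \sigma_1^{-1}(\Ram_\varphi),
\]
which gives the desired equality. There is no real obstacle here; the statement is essentially a formal consequence of multiplicativity together with the fact that elements of $\mathrm{PGL}_2(k)$ have trivial multiplicity function. The only minor point worth flagging is that the formula shows $\Ram_\psi$ is insensitive to $\sigma_2$, reflecting the intuition that post-composition by an automorphism does not change where the map fails to be locally injective.
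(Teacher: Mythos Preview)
Your proof is correct and follows essentially the same approach as the paper: both use the multiplicativity of the local degree (Proposition~\ref{Prop: Multiplicity Properties}\eqref{Item: Multiplicativity}) together with the fact that elements of $\PGL_2(k)$ have multiplicity identically~$1$ to deduce $m_\psi(x) = m_\varphi(\sigma_1(x))$, from which the equality of ramification loci is immediate.
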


\begin{proof}
	This result is an immediate consequence of part~\eqref{Item: Multiplicativity} of the proposition and the fact that automorphisms are unramified:
		\[
			m_\psi(x) = m_{\sigma_2}\left(\varphi(\sigma_1(x))\right) \cdot m_{\varphi}\left(\sigma_1(x)\right)
			\cdot m_{\sigma_1}(x) = m_\varphi \left(\sigma_1(x)\right), \qquad x \in \Berk. \qedhere
		\]
\end{proof}

	The fact that $\Berk$ is a tree implies that a rational function is injective on each connected component of the complement of the ramification locus. 

\begin{cor}
	Let $\varphi \in k(z)$ be a nonconstant rational function, and let $U \subset \Berk$ be a connected weak open subset. If $\varphi|_U$ is not injective, then $U$ contains a ramified point.
\end{cor}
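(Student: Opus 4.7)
I will argue by contrapositive: assuming $U \cap \Ram_\varphi = \emptyset$, I deduce that $\varphi|_U$ is injective. The hypothesis gives $m_\varphi \equiv 1$ on $U$, so Proposition~\ref{Prop: Multiplicity Properties}\eqref{Item: Injective} shows that $\varphi$ is weakly locally injective at every point of $U$ (note that only the ``forward'' direction of that item is used, so no separability hypothesis is needed). Since $\Berk$ is locally connected and uniquely path-connected, the connected open subset $U$ is path-connected, and the unique arc $[x,y] \subset \Berk$ between any two points of $U$ lies entirely in $U$.

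Suppose for contradiction that there are distinct $x, y \in U$ with $\varphi(x) = \varphi(y) = a$. Fixing a homeomorphism $\gamma: [0,1] \stackrel{\sim}{\to} [x,y]$ and setting $\beta = \varphi \circ \gamma$, the composite $\beta$ is a continuous, locally injective loop in $\Berk$ based at $a$. The strategy is to show that no such loop can exist in the tree $\Berk$, thereby forcing injectivity of $\varphi|_U$.

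For the tree-topological step, I would consider the image $T = \beta([0,1])$, a compact connected subset of the tree $\Berk$ and hence itself a compact $\mathbb{R}$-subtree. Local injectivity of $\beta$ on $[0,1]$ precludes $\beta$ from being constant, so $T$ is a non-degenerate subtree and therefore admits at least two endpoints; in particular it has a leaf $b \neq a$. Any preimage $t_0 \in \beta^{-1}(b)$ lies in $(0,1)$ since $\beta(0) = \beta(1) = a \neq b$. A small neighborhood of $b$ in $T$ is a one-sided arc emanating from $b$ (because $b$ is a leaf), so by continuity the values $\beta(t_0 \pm \varepsilon)$ lie on this arc at positive distance from $b$ for small $\varepsilon > 0$. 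Parameterizing the arc by, say, affine diameter or the path-distance metric to $b$, an intermediate-value argument then produces $t_1 < t_0 < t_2$ in $(t_0 - \varepsilon, t_0 + \varepsilon)$ with $\beta(t_1) = \beta(t_2)$, contradicting local injectivity of $\beta$ at $t_0$.

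The main obstacle is the final tree-topology argument. The intermediate-value step requires a careful description of the local structure of $T$ at its leaf $b$ (a one-sided arc) and of how a continuous curve in a neighborhood of $b$ in $T$ must look; I would justify this by invoking the classification of points of $\Berk$ together with the fact that the image of an arc under a continuous map to an $\mathbb{R}$-tree is again a subtree admitting a natural endpoint structure. Everything else in the argument is a formal consequence of the properties of multiplicities already recorded in the excerpt.
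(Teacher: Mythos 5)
Your overall strategy matches the paper's exactly: deduce local injectivity of $\varphi$ along $[x,y]\subset U$ from Proposition~\ref{Prop: Multiplicity Properties}\eqref{Item: Injective}, then argue that a nonconstant locally injective path in the tree $\Berk$ cannot form a loop. The paper's own proof of this corollary dispatches the tree-topology step in one sentence (``cannot have any backtracking\dots contains no loop''), so your attempt to fill it in is reasonable.

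There is, however, a genuine gap in the step you yourself flagged. The assertion that ``a small neighborhood of $b$ in $T$ is a one-sided arc emanating from $b$ (because $b$ is a leaf)'' is not automatic, and the classification of points of $\Berk$ does not give it to you. The image $T=\beta([0,1])$ is a Peano continuum and a subtree, and at an endpoint $b$ of $T$ the branch points of $T$ may accumulate at $b$; in that case no neighborhood of $b$ in $T$ is an arc. Knowing that only one tangent direction of $\Berk$ at $b$ meets $T$ tells you $T$ is ``one-sided'' in that weak sense, but says nothing about $T$ being unbranched within that direction. So the intermediate-value argument, which requires that points of $T$ near $b$ be uniquely determined by their distance to $b$, does not go through as written.

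Fortunately, the gap can be closed in a way that also removes the intermediate-value step entirely. Since $\beta$ is injective on $[t_0-\varepsilon,t_0+\varepsilon]$ for small $\varepsilon$, this restriction is a homeomorphism onto an arc $A\subset T$, and $b=\beta(t_0)$ is an interior point of $A$ because $t_0$ is interior to the interval. Hence $A\smallsetminus\{b\}$ has two components, lying in two distinct tangent directions at $b$, so $T\smallsetminus\{b\}$ is disconnected and $b$ is not an endpoint of $T$. This contradicts the choice of $b$ as a leaf of $T$, and no parameterization of $T$ near $b$ is needed. With this replacement your proof is correct and essentially coincides with the paper's.
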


\begin{proof}
	Let $x,y$ be arbitrary distinct points of $U$. The segment $[x,y]$ is contained in $U$ by connectedness.  If $U$ does not contain a ramified point, then $\varphi$ is locally injective at every point of $[x,y]$ (Proposition~\ref{Prop: Multiplicity Properties}). In particular, the image path $[x,y] \to \varphi([x,y])$ cannot have any backtracking. As $\Berk$ contains no loop, it follows that $\varphi(x) \neq \varphi(y)$, so that $\varphi$ is injective.
\end{proof}


\subsection{The Directional Multiplicity}
\label{Sec: Directional Multiplicity}

	Essentially all of the ideas in this section are due to Rivera-Letelier\cite[\S4]{Rivera-Letelier_Espace_Hyperbolique_2003}, although we will adhere to the notation and terminology of Baker and Rumely\cite[\S9.1]{Baker-Rumely_BerkBook_2010}. 
	
\begin{prop}[{\cite[pp.261--266]{Baker-Rumely_BerkBook_2010}}]
\label{Prop: Directional Definition}
	Let $\varphi \in k(z)$ be a nonconstant rational function, let $x \in \Berk$, and let $\vec{v} \in T_x$. Then there is a positive integer $m$ and a point $x' \in \BB_x(\vec{v})^-$ satisfying the following:
	\begin{enumerate}
		\item\label{Item: Locally constant} $m_\varphi(y) = m$ for all $y \in (x,x')$, and
		\item\label{Item: Local scaling} $\rho(\varphi(x), \varphi(y)) = m \cdot \rho(x,y)$ for all $y \in (x,x')$.
	\end{enumerate}
\end{prop}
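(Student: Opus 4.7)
The plan is to reduce to an explicit Newton-polygon computation in suitably normalized coordinates. The main case is when both $x$ and $\varphi(x)$ are type~II points. Since $\PGL_2(k)$ acts on $\Berk$ preserving multiplicities (Corollary~\ref{Cor: Coord Change}) and $\rho$-distances (by the isometry property noted in \S\ref{Sec: Strong Topology}), I may assume $x = \zeta_{0,1} = \varphi(x)$. The stabilizer of $\zeta_{0,1}$ in $\PGL_2(k)$ acts on $T_{\zeta_{0,1}} \simeq \PP^1(\tilde k)$ through $\PGL_2(\tilde k)$, so I may further arrange that both $\vec v$ and $\varphi_*(\vec v)$ correspond to $0 \in \PP^1(\tilde k)$. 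In a normalized representation $\varphi = f/g$, this translates to $\tilde f(0) = 0$ and $\tilde g(0) \neq 0$. Let $m$ be the order of vanishing of $\tilde f$ at $0$, equivalently $m = m_{\tilde\varphi}(0)$.

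Writing $f = \sum a_i z^i$ and $g = \sum b_j z^j$, one has $|a_i| < 1$ for $i < m$, $|a_m| = 1$, $|b_0| = 1$, and $|a_i|, |b_j| \leq 1$ otherwise. Take $x' = \zeta_{0, r_0}$ for any $r_0 < 1$ strictly greater than $\max\{|a_i|^{1/(m-i)} : i < m,\ a_i \neq 0\}$. For $y = \zeta_{0, r}$ with $r \in (r_0, 1)$, a direct Newton-polygon computation gives $|f(\zeta_{0, r})| = r^m$ with the maximum of $|a_i| r^i$ attained uniquely at $i = m$, and $|g(\zeta_{0, r})| = 1$. Setting $c = \varphi(0) = a_0/b_0$ (so $|c| < r^m$), the same argument applied to the polynomial $f - c'g$ for arbitrary $c' \in k$ yields $|(\varphi - c')(\zeta_{0, r})| = \max(|c - c'|, r^m)$, which identifies $\varphi(\zeta_{0, r}) = \zeta_{c, r^m}$ as a type~II point. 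The equality $m_\varphi(\zeta_{0, r}) = m$ then follows from Proposition~\ref{Prop: Top Characterization}: after the coordinate change $z \mapsto z/\lambda$ with $|\lambda| = r$ sending the Gauss point to $\zeta_{0, r}$, the resulting normalized rational function has reduction a scalar multiple of $z^m$, so a small saturated neighborhood of $\zeta_{0, r}$ maps $m$-to-$1$ onto its image. The metric-scaling claim is immediate: $\rho(\zeta_{0,1}, \zeta_{0, r}) = -\log_{q_k} r$, while $\rho(\zeta_{0,1}, \zeta_{c, r^m}) = -m \log_{q_k} r$.

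For the remaining types, variants of this scheme apply. For $x$ of type~III, reduce to $x = \zeta_{0, r_x}$ with $r_x \notin |k^\times|$ and run the analogous Newton-polygon computation on either tangent segment, with radii rescaled by $r_x$. For $x$ of type~I or~IV, $T_x$ has a unique element; choose $x'$ close enough that $(x, x')$ consists of type~II and~III points to which the preceding analysis applies. If $\varphi(x)$ is not of the same type as $x$, the target normalization requires slight adjustment (e.g.\ composing with $w \mapsto 1/w$), but the argument is structurally unchanged.

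The main obstacle is ensuring that the directional multiplicity $m$ is constant throughout the open segment $(x, x')$, not merely at individual type~II points on it. This reduces to showing that the Newton polygons of $f$, $g$, and $f - c'g$ stabilize on a right-neighborhood of $1$. Exceptional values of $r$ where $|a_i| r^{i-m} = 1$ for some $i < m$ with $a_i \neq 0$ would produce additional terms in the reduction and allow the local degree to jump, but such values form the discrete set $\{|a_i|^{1/(m-i)}\}$ bounded strictly below $1$, and choosing $r_0$ above this maximum avoids them. Controlling all linear test polynomials $h(w) = w - c'$ simultaneously (so as to identify $\varphi(\zeta_{0, r})$ as a specific point $\zeta_{c, r^m}$ rather than just computing its seminorm) adds a layer of bookkeeping, but only finitely many relevant indices appear, so the estimates go through uniformly on $(x, x')$.
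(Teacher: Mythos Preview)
The paper does not give its own proof of this proposition; it is cited wholesale from \cite[pp.~261--266]{Baker-Rumely_BerkBook_2010}. Your strategy---normalize via $\PGL_2(k)$ to $x=\varphi(x)=\zeta_{0,1}$ with $\vec v=\varphi_*(\vec v)=\vec 0$, then run a Newton-polygon/rescaling argument---is exactly the approach used there and also the one the present paper uses later in Lemma~\ref{Lem: Inseparable Nearby}.

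There is, however, a genuine gap in your normalization. From $\varphi_*(\vec 0)=\vec 0$ you conclude $\tilde f(0)=0$ and $\tilde g(0)\neq 0$, hence $|b_0|=1$. That is not justified: $\tilde\varphi$ is $\tilde f/\tilde g$ only \emph{after} cancelling the common factor $h=\gcd(\tilde f,\tilde g)$. If $s=s_\varphi(\zeta_{0,1},\vec 0)>0$ (which certainly can happen, cf.\ Lemma~\ref{Lem: Surplus Multiplicity}), then $z^s\mid\mid\tilde g$ and $z^{m+s}\mid\mid\tilde f$, so $|b_0|<1$ and your threshold $r_0=\max\{|a_i|^{1/(m-i)}:i<m\}$ is wrong. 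With this error the computation $|g(\zeta_{0,r})|=1$ fails, and both the identification $\varphi(\zeta_{0,r})=\zeta_{c,r^m}$ and the claim $m_\varphi(\zeta_{0,r})=m$ break down.

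The fix is exactly what the paper does in the proof of Lemma~\ref{Lem: Inseparable Nearby}: set
\[
r_0=\max\Bigl(\{|a_i|^{1/(m+s-i)}:0\le i<m+s\}\cup\{|b_j|^{1/(s-j)}:0\le j<s\}\Bigr),
\]
and for $r_0<|t|<1$ consider $\varphi_t(z)=t^{-m}\varphi(tz)$; one checks as in~\eqref{Eq: Renormalized}--\eqref{Eq: Explicit Coefficients} that $\varphi_t$ is normalized with reduction $\widetilde{\varphi_t}(z)=(\tilde a_{m+s}/\tilde b_s)z^m$. The Algebraic Reduction Formula then gives $m_\varphi(\zeta_{0,|t|})=m$, and the metric scaling follows since $\varphi(\zeta_{0,|t|})=\zeta_{0,|t|^m}$ in these coordinates. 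With this correction your argument is complete for the type~II case; the remaining types are handled, as you indicate, by density and the local constancy already established.
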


	The integer $m$ in the proposition is called the \textbf{directional multiplicity}, and we denote it by $m_\varphi(x,\vec{v})$. Part~\eqref{Item: Locally constant} shows that it satisfies $m_\varphi(x, \vec{v}) \leq \deg(\varphi)$.


	 For the next statement, a \textbf{generalized open Berkovich disk} is a weakly open set of the form $\BB_x(\vec{v})^-$ for some point $x \in \Berk$ and some tangent vector $\vec{v}$ at $x$. Equivalently, a weak open subset is a generalized open Berkovich disk if and only if it has exactly one boundary point. 

\begin{prop}
	\label{Prop: Rivera-Letelier Mapping}
	Let $\varphi \in k(z)$ be a nonconstant rational function. Let $\BB = \BB_x(\vec{v})^-$ be a generalized open Berkovich disk.
Then $\varphi(\BB)$ always contains the generalized open Berkovich disk $\BB' = \BB_{\varphi(x)}(\varphi_*(\vec{v}))^-$, and either $\varphi(\BB) = \BB'$ or $\varphi(\BB) = \Berk$. Set $m = m_\varphi(x, \vec{v})$ for the directional multiplicity.
	\begin{enumerate}
		\item If $\varphi(\BB) = \BB'$, then for each $y \in \BB'$ there are exactly $m$ solutions to 
			$\varphi(z) = y$ in $\BB$ (counted with multiplicities).
			
		\item\label{Item: Surplus Multiplicity} 
			If $\varphi(\BB) = \Berk$, then there there is a unique integer $s > 0$ such that for each $y \in \BB'$, 
			there are $s+m$ solutions to $\varphi(z) = y$ in $\BB$ (counted with multiplicities), and
			for each $y \in \Berk \smallsetminus \BB'$ there are $s$ solutions to 
			$\varphi(z) = y$ in $\BB$ (counted with multiplicities).
	\end{enumerate}
\end{prop}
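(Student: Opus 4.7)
The plan is to combine the directional multiplicity supplied by Proposition~\ref{Prop: Directional Definition} with a ``balancing'' principle for finite morphisms: the number of preimages of a varying point $y$ that lie in a fixed open set, counted with multiplicities, stays constant as long as $y$ avoids the image of the boundary. In our situation $\partial \BB = \{x\}$ maps to $\varphi(x)$, so only this one ``wall'' interferes.

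First I would establish the dichotomy for $\varphi(\BB)$. This set is connected (continuous image of a connected set) and open: by Proposition~\ref{Prop: Top Characterization}, a $\varphi$-saturated neighborhood of any $z \in \BB$ maps onto a weakly open neighborhood of $\varphi(z)$. Moreover, $\varphi(\overline{\BB}) = \varphi(\BB) \cup \{\varphi(x)\}$ is closed since $\overline{\BB}$ is compact and $\varphi$ is continuous, so the topological boundary of $\varphi(\BB)$ is contained in $\{\varphi(x)\}$. If that boundary is empty, $\varphi(\BB)$ is clopen in the connected space $\Berk$ and must equal $\Berk$; otherwise $\varphi(\BB)$ is a connected weakly open set with $\varphi(x)$ as its only boundary point, i.e.\ a generalized open Berkovich disk $\BB_{\varphi(x)}(\vec{w})^-$. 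Using an initial segment $(x, x') \subset \BB$ as in Proposition~\ref{Prop: Directional Definition}, the image $\varphi((x, x'))$ is a nondegenerate segment leaving $\varphi(x)$ in the direction $\varphi_*(\vec{v})$, forcing $\vec{w} = \varphi_*(\vec{v})$. Hence in either case $\varphi(\BB) \supseteq \BB'$.

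Next I would study the preimage count $N(y) := \#\bigl(\varphi^{-1}(y) \cap \BB\bigr)$, with multiplicities. Proposition~\ref{Prop: Top Characterization} shows that $N$ is locally constant on $\Berk \smallsetminus \{\varphi(x)\}$: preimages move continuously, each contributes its local multiplicity $m_\varphi(z)$ over a saturated neighborhood, and they cannot cross $\partial \BB = \{x\}$ without $y$ passing through $\varphi(x)$. Hence $N$ takes a single value $N_{\vec{w}}$ on each connected component $\BB_{\varphi(x)}(\vec{w})^-$ of $\Berk \smallsetminus \{\varphi(x)\}$. To evaluate $N_{\vec{w}}$, let $y$ tend to $\varphi(x)$ from inside direction $\vec{w}$: each preimage of $y$ accumulates at some preimage of $\varphi(x)$, and the total contribution at each such $z$ is $m_\varphi(z)$. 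For $z \in \varphi^{-1}(\varphi(x)) \cap \BB$, the entire contribution $m_\varphi(z)$ lies in $\BB$ because $z$ is an interior point of $\BB$. The boundary point $x$ is not in $\BB$; among its tangent directions, only $\vec{v}$ points into $\BB$, so $x$ contributes $m_\varphi(x, \vec{v}) = m$ to $N(y)$ precisely when $\vec{w} = \varphi_*(\vec{v})$ and $0$ otherwise. Setting $s := \sum_{z \in \varphi^{-1}(\varphi(x)) \cap \BB} m_\varphi(z)$, we obtain $N_{\vec{w}} = s$ for $\vec{w} \neq \varphi_*(\vec{v})$ and $N_{\varphi_*(\vec{v})} = s + m$. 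Finally $s > 0$ if and only if $\varphi(x) \in \varphi(\BB)$ if and only if (by the dichotomy) $\varphi(\BB) = \Berk$, so conclusion~(1) corresponds to $s = 0$ and conclusion~(2) to $s > 0$; the value at $y = \varphi(x)$ equals $s$ by definition of $s$.

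The main obstacle will be making the accumulation-of-preimages argument near $x$ rigorous: one must verify that the tangent direction $\vec{v}$ absorbs exactly $m = m_\varphi(x, \vec{v})$ preimages of $y$, rather than some larger fraction of $m_\varphi(x)$. This rests on the directional decomposition $m_\varphi(x) = \sum_{\vec{v}'} m_\varphi(x, \vec{v}')$ summed over tangent directions $\vec{v}' \in T_x$ whose image under $\varphi_*$ is the prescribed $\vec{w}$, together with the observation that different tangent directions at $x$ correspond to different connected components of $\Berk \smallsetminus \{x\}$ and so unambiguously determine whether a nearby preimage of $y$ lies inside $\BB$ or outside.
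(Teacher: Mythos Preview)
Your approach is correct and genuinely different from the paper's. The paper does not prove the proposition from scratch: it cites Rivera-Letelier and Baker--Rumely (\cite[Prop.~9.41]{Baker-Rumely_BerkBook_2010}) for every case except $\varphi(\BB)=\Berk$ and $y=\varphi(x)$, and then handles that single remaining case by a perturbation argument. One shrinks to a slightly smaller disk $\BB_0\subsetneq\BB$ whose boundary $x_0$ lies on the segment of Proposition~\ref{Prop: Directional Definition} where $m_\varphi$ is constant and $\varphi$ is injective; then $\varphi(x)$ sits strictly outside the image disk $\BB_0'$, the already-established count applies to $\BB_0$, and one checks that the preimages of $\varphi(x)$ in $\BB$ and in $\BB_0$ coincide. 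Your route via local constancy of $N(y)=\#(\varphi^{-1}(y)\cap\BB)$ on $\Berk\smallsetminus\{\varphi(x)\}$ is more self-contained and uniform, and delivers the value at $y=\varphi(x)$ directly as $s=\sum_{z\in\varphi^{-1}(\varphi(x))\cap\BB}m_\varphi(z)$ rather than by reduction.

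The ``main obstacle'' you flag is indeed the crux, and the Directional Multiplicity Formula alone does not close it. That formula gives $m_\varphi(x)=\sum_{\varphi_*(\vec{v}')=\vec{w}} m_\varphi(x,\vec{v}')$, but you still need the preimage count near $x$ in \emph{each individual} direction $\vec{v}'$ to equal $m_\varphi(x,\vec{v}')$, not merely the correct total. You can supply the missing lower bound from Proposition~\ref{Prop: Directional Definition}: on a short segment $(x,x')$ in direction $\vec{v}'$ the multiplicity is constantly $m_\varphi(x,\vec{v}')$, so a point on that segment is itself a preimage of its image with that multiplicity; combined with your own local-constancy argument applied to a small subdisk in direction $\vec{v}'$ (whose image, by your dichotomy, is a small disk in direction $\varphi_*(\vec{v}')$), this gives $\geq m_\varphi(x,\vec{v}')$ preimages there for every nearby $y$ in direction $\varphi_*(\vec{v}')$. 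Since the lower bounds sum to $m_\varphi(x)$, they are all equalities. Note also that the Directional Multiplicity Formula appears in this paper as Proposition~\ref{Prop: Expansion Properties}, logically \emph{after} the proposition you are proving; both are imported from \cite{Baker-Rumely_BerkBook_2010}, so there is no genuine circularity, but you should flag the dependence.
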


\begin{proof}
	The proposition seems to have been known to Rivera-Letelier \cite[\S4.1]{Rivera-Letelier_Espace_Hyperbolique_2003}, but it was only stated in the case where $y$ is a classical point. Baker and Rumely give a proof of the full statement except for the case where $\varphi(\BB) =\Berk$ and $y = \varphi(x)$ \cite[Prop.~9.41]{Baker-Rumely_BerkBook_2010}. If $\varphi(\BB) = \Berk$ and $x$ is of type~I or type~IV, then $y = \varphi(x)$ is the unique point in $\Berk \smallsetminus \BB'$. Evidently the desired result holds with $s = \deg(\varphi) - m$.  We will supply the remaining case now using a perturbation argument. 
	
	Suppose that $x$ is of type~II or type~III and $\varphi(\BB) = \Berk$. The result of Baker and Rumely tells us that there is an integer $s > 0$ such that for each $y \in \Berk \smallsetminus \overline{\BB'}$, $y$ has $s$ pre-images inside $\BB$ (counted with multiplicities). We must now extend this statement to the point $y = \varphi(x)$. There exists a segment $I = [x, x']$ with $x' \in \BB$ such that $m_\varphi$ is constant with value $m$ on the interior of $I$ (Proposition~\ref{Prop: Directional Definition}) and such that $\varphi$ is injective on $I$ \cite[Thm.~9.35]{Baker-Rumely_BerkBook_2010}. Fix an ancillary element $y_1 \neq y$ in the complement of $\BB'$. Select an open Berkovich disk $\BB_0 \subsetneq \BB$ of the form $\BB_0 = \BB_{x_0}(\vec{v}_0)^-$ satisfying the following properties:
	\begin{itemize}
		\item $x_0$ lies in the interior of the segment $I$;
		\item $\varphi^{-1}(y) \cap \BB = \varphi^{-1}(y) \cap \BB_0$ and 
			$\varphi^{-1}(y_1) \cap \BB = \varphi^{-1}(y_1) \cap \BB_0$; and
		\item $\varphi(\BB_0) = \Berk$.
	\end{itemize}
Each of these three properties holds for any sufficiently large subdisk of $\BB$:  the second because each element of $\Berk$ has only finitely many pre-images under~$\varphi$, and the third by compactness. 

Write $\BB'_0 = \BB_{\varphi(x_0)}(\varphi_*(\vec{v}_0))^-$.  Then $\BB'_0 \subsetneq \BB'$ because $\varphi$ is injective on $I$, and hence $y, y_1 \in \Berk \smallsetminus \overline{\BB_0'}$. Applying the case already proved by Baker and Rumely to the disks $\BB_0$ and $\BB$ separately, we find that there is a unique integer $s_0 > 0$ such that $y$ and $y_1$ each has $s_0$ pre-images in $\BB_0$, counted with multiplicities. That is, 
	\[
		\# \left(\varphi^{-1}(y) \cap \BB\right) = \# \left(\varphi^{-1}(y) \cap \BB_0\right) = s_0 = \# \left(\varphi^{-1}(y_1) \cap \BB_0\right) =
		\# \left(\varphi^{-1}(y_1) \cap \BB\right) = s.  \qedhere
	\]
\end{proof}

	The next result gives an algebraic relationship between the multiplicity $m_\varphi(x)$ and the directional multiplicities $m_\varphi(x, \vec{v})$ for $\vec{v} \in T_x$. 
	
\begin{prop}[{\cite[Thm.~9.22]{Baker-Rumely_BerkBook_2010}}]
\label{Prop: Expansion Properties}
	Let $\varphi \in k(z)$ be a nonconstant rational function and let $x \in \Berk$.
	\begin{enumerate}
		
		\item(Directional Multiplicity Formula)
			\label{Item: Directional Sum} For each tangent vector $\vec{w}$ at $\varphi(x)$, we have
				\[
					m_\varphi(x) = \sum_{\substack{\vec{v} \in T_x \\ \varphi_*(\vec{v}) = \vec{w}}}
						m_\varphi(x, \vec{v}). 
				\]
		\item
			\label{Item: Tangent Properties} 
			The induced map $\varphi_*: T_x \to T_{\varphi(x)}$ is surjective.			
			If $x$ is of type~I,~III, or~IV, then $m_\varphi(x) = m_\varphi(x, \vec{v})$ for 
			each tangent vector $\vec{v} \in T_x$. 
	\end{enumerate} 
\end{prop}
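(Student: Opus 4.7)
The plan for part~(1) is to count pre-images of a nearby classical point in two ways. Fix $\vec{w} \in T_{\varphi(x)}$ and pick a classical point $b \in \PP^1(k) \cap \BB_{\varphi(x)}(\vec{w})^-$ very close to $\varphi(x)$, along with a small $\varphi$-saturated weak open neighborhood $U$ of $x$ so that Proposition~\ref{Prop: Top Characterization} applies: the total number of pre-images of $b$ in $U$ (counted with multiplicities) equals $m_\varphi(x)$. This total splits as a sum over the tangent directions $\vec{v} \in T_x$ via the decomposition $U \smallsetminus \{x\} = \bigsqcup_{\vec{v}} (U \cap \BB_x(\vec{v})^-)$, and the goal is to show that each piece contributes $m_\varphi(x, \vec{v})$ when $\varphi_*(\vec{v}) = \vec{w}$ and nothing otherwise.

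To identify the per-direction contributions, I would apply Proposition~\ref{Prop: Rivera-Letelier Mapping} to each full disk $\BB_x(\vec{v})^-$, producing either $m_\varphi(x,\vec{v})$ pre-images of $b$ (non-surjective case) or $s + m_\varphi(x,\vec{v})$ pre-images (surjective case, when $b \in \BB_{\varphi(x)}(\varphi_*(\vec{v}))^-$), or only $s$ pre-images (surjective case, when $b$ lies outside the image disk). For $\vec{v}$ with $\varphi_*(\vec{v}) \neq \vec{w}$, the pre-images of $b$ in $\BB_x(\vec{v})^-$ are either absent (non-surjective) or $s$ in number (surjective), and shrinking $U$ pushes these outside $U \cap \BB_x(\vec{v})^-$, so they contribute nothing. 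For $\vec{v}$ with $\varphi_*(\vec{v}) = \vec{w}$, the metric scaling from Proposition~\ref{Prop: Directional Definition} shows that $m_\varphi(x, \vec{v})$ pre-images of $b$ lie in a small neighborhood of $x$ along the direction $\vec{v}$, while the remaining $s$ ``surplus'' pre-images (when present) stay at bounded $\rho$-distance from $x$ and are excluded from $U$. Summing over $\vec{v}$ with $\varphi_*(\vec{v}) = \vec{w}$ yields the directional multiplicity formula.

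For part~(2), the surjectivity of $\varphi_*: T_x \to T_{\varphi(x)}$ is already invoked from \cite[Cor.~9.20]{Baker-Rumely_BerkBook_2010}. To obtain the equality $m_\varphi(x) = m_\varphi(x, \vec{v})$ in types~I, III, and~IV, the plan is to combine part~(1) with the standard fact that a nonconstant rational morphism of $\Berk$ preserves the type of each point. Then $|T_x| = |T_{\varphi(x)}|$ is finite and equal to $1$, $2$, $1$ in these three cases, so surjectivity forces bijectivity of $\varphi_*$. Consequently, each $\vec{w} \in T_{\varphi(x)}$ has a unique pre-image $\vec{v}$ under $\varphi_*$, and the sum in the directional multiplicity formula collapses to the single term $m_\varphi(x, \vec{v})$, which therefore equals $m_\varphi(x)$.

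I expect the hardest step to be the rigorous justification of the claim that ``surplus'' pre-images remain at positive $\rho$-distance from $x$, uniformly across tangent directions. This uses the finiteness of $\varphi^{-1}(\varphi(x))$ (so that pre-images of $\varphi(x)$ distinct from $x$ are bounded away from $x$), together with the local metric scaling of Proposition~\ref{Prop: Directional Definition} to extend the argument to pre-images of $b$ for $b$ close enough to $\varphi(x)$. Particular care is required at type~II points, where there can be infinitely many tangent directions and only finitely many of them can admit surplus pre-images, so shrinking $U$ excludes the surplus across all relevant directions simultaneously.
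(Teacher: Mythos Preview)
The paper does not prove this proposition at all: it is quoted from \cite[Thm.~9.22]{Baker-Rumely_BerkBook_2010} and used as a black box, so there is no in-paper argument to compare against. Your sketch is a reasonable reconstruction of how one might prove it, and the overall strategy---counting classical pre-images near $\varphi(x)$ via Proposition~\ref{Prop: Top Characterization} and decomposing by tangent direction using Proposition~\ref{Prop: Rivera-Letelier Mapping}---is sound. Part~(2) is handled correctly once part~(1) is in hand, exactly as you describe.

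One point of caution: you invoke Proposition~\ref{Prop: Rivera-Letelier Mapping} in the proof, but in Baker--Rumely the logical order is the reverse (the Directional Multiplicity Formula is established first and then used to prove the mapping dichotomy for disks), so if you were writing this up from scratch you would want to verify there is no circularity. In the context of this paper both results are simply imported, so the issue is moot, but it is worth being aware of. The ``hardest step'' you flag---that surplus pre-images stay uniformly bounded away from $x$---is indeed the crux; your suggested justification via finiteness of $\varphi^{-1}(\varphi(x))$ and the local metric scaling is the right idea, and the observation that only finitely many directions at a type~II point can carry nonzero surplus (since the total surplus is bounded by $\deg(\varphi)$) is exactly what makes the shrinking of $U$ work uniformly.
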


\begin{cor}
\label{Cor: Closed}
	Let $\varphi \in k(z)$ be a nonconstant rational function. Then $\Ram_\varphi$ is a closed subset of $\Berk$ with no isolated point (for both the weak and strong topologies). 
\end{cor}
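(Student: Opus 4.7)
The plan is to dispatch the two claims separately. Closedness is immediate: $\Ram_\varphi = \{x \in \Berk : m_\varphi(x) \geq 2\}$, and by Proposition~\ref{Prop: Multiplicity Properties}(1) (together with the remark that semicontinuity holds in both topologies) $m_\varphi$ is upper semicontinuous, so $\Ram_\varphi$ is closed in both the weak and strong topologies.

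For the no-isolated-points claim, fix $x \in \Ram_\varphi$ and write $m = m_\varphi(x) \geq 2$. The strategy is to exhibit a tangent vector $\vec{v} \in T_x$ with $m_\varphi(x,\vec{v}) \geq 2$. Granted such a $\vec{v}$, Proposition~\ref{Prop: Directional Definition} supplies $x' \in \BB_x(\vec{v})^-$ with $m_\varphi \equiv m_\varphi(x,\vec{v}) \geq 2$ on the open segment $(x,x')$; these ramified points accumulate at $x$ in both topologies, so $x$ is not isolated. If $x$ is of type~I, III, or IV, Proposition~\ref{Prop: Expansion Properties}(2) already gives $m_\varphi(x,\vec{v}) = m \geq 2$ for any $\vec{v} \in T_x$, so this case is immediate.

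The main obstacle is the type~II case, where a priori nothing prevents the total multiplicity $m$ from being spread over $m$ distinct tangent directions each of directional multiplicity~$1$. After conjugating by appropriate elements of $\PGL_2(k)$ on source and target (harmless by Corollary~\ref{Cor: Coord Change}), I may assume $x = \varphi(x) = \zeta_{0,1}$, so that $\varphi$ has nonconstant reduction $\widetilde\varphi$ of degree $m$, and $\varphi_*$ on $T_{\zeta_{0,1}} \cong \PP^1(\tilde k)$ coincides with $\widetilde\varphi : \PP^1(\tilde k) \to \PP^1(\tilde k)$. Suppose for contradiction that $m_\varphi(x,\vec{v}) = 1$ for every $\vec{v} \in T_x$. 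Then the Directional Multiplicity Formula (Proposition~\ref{Prop: Expansion Properties}(1)) forces every fiber of $\widetilde\varphi$ on $\PP^1(\tilde k)$ to contain exactly $m$ distinct classical points. This is impossible for $m \geq 2$: if $\widetilde\varphi$ is separable, Proposition~\ref{Prop: Hurwitz} forces the existence of a critical point, cutting some fiber below size $m$; if $\widetilde\varphi$ is inseparable, each classical multiplicity is divisible by the residue characteristic $p$ and so is at least~$2$, again cutting every fiber below size $m$. Either way the desired $\vec{v}$ exists, completing the proof. The essential technical point is that the Directional Multiplicity Formula lets one bypass any delicate termwise identification between $m_\varphi(x,\vec{v})$ and the classical multiplicity of $\widetilde\varphi$: only the sum relation and the standard fact that a nonconstant self-map of $\PP^1$ over an algebraically closed field of degree $\geq 2$ must ramify are required.
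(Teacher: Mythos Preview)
Your proof is correct and follows the same strategy as the paper: closedness from upper semicontinuity of $m_\varphi$, then exhibit a ramified tangent direction and invoke Proposition~\ref{Prop: Directional Definition} to produce nearby ramified points. The only difference is that the paper dispatches the existence of a ramified direction in one line by citing the Directional Multiplicity Formula (Proposition~\ref{Prop: Expansion Properties}\eqref{Item: Directional Sum}), whereas you spell out the type~II case via the reduction $\widetilde\varphi$ and Hurwitz; this is a reasonable elaboration, since the formula alone does not literally preclude all directional multiplicities being~$1$ at a type~II point without the extra observation that a degree $\geq 2$ self-map of $\PP^1_{\tilde k}$ must ramify.
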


\begin{proof}	
	Proposition~\ref{Prop: Multiplicity Properties}\eqref{Item: Semicontinuity} immediately implies that $\Ram_\varphi$ is closed. Proposition~\ref{Prop: Expansion Properties}\eqref{Item: Directional Sum} shows that if $m_\varphi(x) > 1$, then there is a direction $\vec{v} \in T_x$ such that $m_\varphi(x, \vec{v}) > 1$. It follows from Proposition~\ref{Prop: Directional Definition}\eqref{Item: Locally constant} that there exists $x' \in \BB_x(\vec{v})^-$ such that $m_\varphi(y) = m_\varphi(x, \vec{v}) > 1$ for all $y \in (x,x')$. Hence $\Ram_\varphi$ has no isolated point. 
\end{proof}

The following proposition, due to Rivera-Letelier, gives the best technique for determining the value of the multiplicity function at a type~II point.

\begin{prop}[Algebraic Reduction Formula, {\cite[Thm.~9.42]{Baker-Rumely_BerkBook_2010}}]
	Let $\varphi \in k(z)$ be a nonconstant rational function, and let $x \in \Berk$ be a point of type~II. Put $y = \varphi(x)$, choose $\sigma_1, \sigma_2 \in \PGL_2(k)$ such that $\sigma_1(x) = \sigma_2(y) = \zeta_{0,1}$, and set $\psi(z) = \sigma_2 \circ \varphi \circ \sigma_1^{-1}$. Then $\psi$ has nonconstant reduction $\tilde{\psi}$ and 
		\[
			m_\varphi(x) = \deg(\tilde{\psi}).
		\]
For each $a \in \PP^1(\tilde{k})$, if $\vec{v}_a \in T_x$ is the associated tangent direction under the bijection between $T_x$ and $\PP^1(\tilde{k})$ afforded by $(\sigma_1)_*$, we have
		\[
			m_\varphi(x, \vec{v}_a) = m_{\tilde{\psi}}(a).
		\]
\end{prop}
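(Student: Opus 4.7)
First I would eliminate the change of coordinates: applying Proposition~\ref{Prop: Multiplicity Properties}\eqref{Item: Multiplicativity} together with the fact that every Möbius transformation has multiplicity identically $1$ yields $m_\varphi(x)=m_\psi(\zeta_{0,1})$ and, via the bijection $(\sigma_1)_*:T_x\to T_{\zeta_{0,1}}$, a matching identity for directional multiplicities. So it suffices to prove the assertions when $\sigma_1=\sigma_2=\mathrm{id}$ and $\psi=\varphi$ fixes the Gauss point. The proposition at the end of Section~\ref{Sec: Basics non-Archimedean} then gives $\deg\tilde\varphi\ge 1$, and the discussion immediately following it identifies $\varphi_*:T_{\zeta_{0,1}}\to T_{\zeta_{0,1}}$ with $\tilde\varphi:\PP^1(\tilde k)\to\PP^1(\tilde k)$ under the canonical bijection $a\leftrightarrow\vec{v}_a$.

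\textbf{Preimages in $\BB_{\zeta_{0,1}}(\vec{v}_a)^-$.} Fix $a\in\PP^1(\tilde k)$. After composing with Möbius transformations in $\PGL_2(k^\circ)$ on source and target---these fix $\zeta_{0,1}$ and reduce to automorphisms of $\PP^1(\tilde k)$---I may assume $a$, $\tilde\varphi(a)$, and $\tilde\varphi(\infty)$ are three distinct points of $\tilde k$. Choose lifts $\hat a,\hat c\in k^\circ$ of $a$ and $\tilde c:=\tilde\varphi(a)$; then $\BB:=\BB_{\zeta_{0,1}}(\vec{v}_a)^-=\DD(\hat a,1)^-$ and $\BB':=\BB_{\zeta_{0,1}}(\varphi_*\vec{v}_a)^-=\DD(\hat c,1)^-$. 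For any $z\in\BB$ one has $\tilde z=a$, hence $\widetilde{\varphi(z)}=\tilde\varphi(a)=\tilde c$, so $\varphi(z)\in\BB'$. Thus $\varphi(\BB)\subseteq\BB'$, which eliminates the surplus alternative in Proposition~\ref{Prop: Rivera-Letelier Mapping}; by that proposition, each classical $b\in\BB'$ has exactly $m_\varphi(\zeta_{0,1},\vec{v}_a)$ preimages (with multiplicities) inside $\BB$.

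\textbf{Hensel lift and the directional formula.} Write $\varphi=f/g$ in normalized form; the preimages of a classical $b$ are the zeros of $P_b(z):=f(z)-bg(z)\in k^\circ[z]$. Pick $b\in k^\circ$ with reduction $\tilde b=\tilde c$. Because $\tilde b\neq\tilde\varphi(\infty)$, the reduction $\tilde P_b=\tilde f-\tilde b\tilde g$ has degree equal to $\deg\tilde\varphi$, and for each $a'\in\tilde\varphi^{-1}(\tilde c)$ the order of vanishing of $\tilde P_b$ at $a'$ equals $m_{\tilde\varphi}(a')$. The factoring form of Hensel's lemma then lifts the factor $(z-a)^{m_{\tilde\varphi}(a)}$ of $\tilde P_b$ to a monic factor $R\in k^\circ[z]$ of $P_b$ of the same degree, all of whose roots lie in $\DD(\hat a,1)^-$. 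Hence $\#(\varphi^{-1}(b)\cap\BB)=m_{\tilde\varphi}(a)$, and matching this with the previous count forces $m_\varphi(\zeta_{0,1},\vec{v}_a)=m_{\tilde\varphi}(a)$. Summing over $a\in\tilde\varphi^{-1}(\tilde c)$ via Proposition~\ref{Prop: Expansion Properties}\eqref{Item: Directional Sum} (with $\vec{w}=\vec{v}_{\tilde c}$) then yields
\benn
m_\varphi(\zeta_{0,1})=\sum_{\tilde\varphi(a)=\tilde c}m_{\tilde\varphi}(a)=\deg\tilde\varphi.
\eenn

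\textbf{Main obstacle.} The crux is the interplay between Proposition~\ref{Prop: Rivera-Letelier Mapping} and Hensel's lemma: a priori that proposition leaves open a positive surplus $s$, in which case the Hensel count inside $\BB$ would differ from $m_\varphi(\zeta_{0,1},\vec{v}_a)$ by $s$. The residue-class observation $\varphi(\BB)\subseteq\BB'$ is the decisive input that rules this out. The auxiliary normalization $\tilde\varphi(\infty)\neq\tilde c$ is the technical device that preserves $\deg\tilde P_b=\deg\tilde\varphi$, so that the factoring form of Hensel applies with the correct degrees.
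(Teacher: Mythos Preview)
The paper does not prove this proposition; it is quoted from \cite[Thm.~9.42]{Baker-Rumely_BerkBook_2010}. So there is no in-paper argument to compare against, but your proposal contains a genuine gap.

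The assertion $\varphi(\BB)\subseteq\BB'$ in your second paragraph is not valid in general, and your ``residue-class observation'' does not establish it. The step ``$\tilde z=a$ hence $\widetilde{\varphi(z)}=\tilde\varphi(a)$'' tacitly assumes that $a$ is not a common zero of $\tilde f$ and $\tilde g$ (with $\varphi=f/g$ normalized). When $\tilde f(a)=\tilde g(a)=0$, the identity $\widetilde{\varphi(z)}=\tilde\varphi(\tilde z)$ breaks down for classical $z\in D(\hat a,1)^-$: indeed $f$ then has a zero and $g$ a zero in that disk, so $\varphi$ has both a zero and a pole there, whence $\varphi(\BB)=\Berk$ and the surplus alternative of Proposition~\ref{Prop: Rivera-Letelier Mapping} is \emph{not} ruled out. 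Your $\PGL_2(k^\circ)$ normalization cannot cure this, since such coordinate changes merely permute the tangent directions and do not alter whether $\varphi(\BB)=\Berk$ for the direction at hand. The same oversight recurs in the Hensel step: writing $H=\gcd(\tilde f,\tilde g)$ and $\tilde f=Hf_1$, $\tilde g=Hg_1$, one has $\tilde P_b=H\cdot(f_1-\tilde b\,g_1)$, so $\ord_a\tilde P_b=\ord_a H+m_{\tilde\varphi}(a)$ and $\deg\tilde P_b=\deg H+\deg\tilde\varphi$, not the values you claim.

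The repair is short: also count preimages in $\BB$ of a classical point $b'$ with $\tilde b'\neq\tilde\varphi(a)$ and $\tilde b'\neq\tilde\varphi(\infty)$. Proposition~\ref{Prop: Rivera-Letelier Mapping} identifies this count as the surplus $s$, while the same Hensel argument gives it as $\ord_a H$. Subtracting this from your first count, which correctly reads $m_\varphi(\zeta_{0,1},\vec v_a)+s=\ord_a H+m_{\tilde\varphi}(a)$, yields $m_\varphi(\zeta_{0,1},\vec v_a)=m_{\tilde\varphi}(a)$; the degree formula then follows from Proposition~\ref{Prop: Expansion Properties}\eqref{Item: Directional Sum} exactly as you wrote.
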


	As an application of the results in this section, we describe the ramification locus for inseparable rational functions.

\begin{prop}
\label{Prop: Frobenius}
	Suppose $k$ has characteristic~$p > 0$ (and hence residue characteristic $p$, in accordance with our conventions). Let $\varphi \in k(z)$ be a nonconstant inseparable rational function. Then $\Ram_\varphi = \Berk$. 
\end{prop}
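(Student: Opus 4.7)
The plan is to show that every classical point lies in $\Ram_\varphi$, and then to invoke the density of $\PP^1(k)$ in $\Berk$ together with the closedness of $\Ram_\varphi$.

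Since $\varphi$ is inseparable, I would factor $\varphi = \varphi_1 \circ F$, where $F(z) = z^p$ is the Frobenius endomorphism and $\varphi_1 \in k(z)$. By the multiplicativity of multiplicities (Proposition~\ref{Prop: Multiplicity Properties}\eqref{Item: Multiplicativity}), for any $x \in \Berk$,
$$ m_\varphi(x) \;=\; m_{\varphi_1}(F(x)) \cdot m_F(x) \;\geq\; m_F(x), $$
so it suffices to prove $m_F(a) \geq p$ for every $a \in \PP^1(k)$. For finite $a$, the Freshman's dream identity $(z+a)^p = z^p + a^p$ in characteristic $p$ shows that in the coordinates $u = z - a$ on the source and $w = F - a^p$ on the target the map $F$ is simply $w = u^p$, so $m_F(a) = p$. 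The case $a = \infty$ is handled analogously via the coordinate change $z \mapsto 1/z$. Consequently, $\PP^1(k) \subset \Ram_\varphi$.

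Since $\Ram_\varphi$ is closed in $\Berk$ (Corollary~\ref{Cor: Closed}) and $\PP^1(k)$ is dense in $\Berk$ for the weak topology, I conclude $\Ram_\varphi \supseteq \overline{\PP^1(k)} = \Berk$, yielding $\Ram_\varphi = \Berk$. There is no serious obstacle to this plan; the essential structural inputs are the multiplicativity of $m_\varphi$ and the closedness of $\Ram_\varphi$, both already available in this section.
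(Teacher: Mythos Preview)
Your argument is correct. Both your proof and the paper's share the same skeleton: factor $\varphi$ through the Frobenius $F(z)=z^p$, invoke multiplicativity of $m_\varphi$, compute $m_F$ on a dense set, and then extend. The difference lies in which dense set and which extension tool. The paper computes $m_F$ at every type~II point via the Algebraic Reduction Formula (using the identity $b^{-p}[F(bz+a)-F(a)]=F(z)$ to reduce $\zeta_{a,r}$ to the Gauss point), obtaining the exact equality $m_F\equiv p$ on all of $\Berk$ by the locally-constant property of Proposition~\ref{Prop: Directional Definition}\eqref{Item: Locally constant}. You instead compute $m_F$ at every type~I point by the elementary algebraic identity $(z+a)^p=z^p+a^p$, obtaining only $\PP^1(k)\subset\Ram_\varphi$, and then close up using Corollary~\ref{Cor: Closed}. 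Your route is slightly more economical---it avoids the Algebraic Reduction Formula and only needs the inequality $m_\varphi>1$ rather than the exact value---while the paper's version yields the sharper statement $m_\varphi(x)\ge p^\ell$ for all $x$ (where $\ell$ is the inseparable exponent), which you do not recover.
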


\begin{proof}	
	We begin by showing that $m_F \equiv p$, where $F \in k(z)$  is the relative Frobenius map defined by $F(z) = z^p$. For a closed disk $D(a, r)$ with rational radius and any $b \in D(a, r)$ with $|a-b| = r$, observe that 
		\[
			\psi(z) = b^{-p}[F(bz + a) - F(a)] = F(z).
		\]
Hence $m_F(\zeta_{a, r}) = m_F(\zeta_{0,1}) = p$ by the Algebraic Reduction Formula. Since $m_F$ takes the same value at any type~II point, we conclude that $m_F \equiv p$ (Proposition~\ref{Prop: Directional Definition}\eqref{Item: Locally constant}).

	Now we may  factor $\varphi$ uniquely as $\varphi  =  \psi \circ F^\ell$, where $\psi \in k(z)$ is separable, $\ell \geq 1$, and $F^{\ell} = F \circ \cdots \circ F$ is the $\ell$-fold iterate of $F$. 
By Proposition~\ref{Prop: Multiplicity Properties}\eqref{Item: Multiplicativity}, we see that 
		\begin{align*}
			m_\varphi(x) &= 
			m_\psi\left(F^{\ell}(x)\right) \cdot m_F\left(F^{\ell-1}(x)\right) 
			\cdot m_F\left(F^{\ell-2}(x)\right) \cdots m_F(x) \\
			&= p^\ell \cdot m_\psi\left(F^{\ell}(x)\right) \geq p^\ell.
		\end{align*}
As $x$ is arbitrary, $\Ram_\varphi = \Berk$. 
\end{proof}


\subsection{The Surplus Multiplicity}
\label{Sec: Surplus Multiplicity}

With the notation in Proposition~\ref{Prop: Rivera-Letelier Mapping}, we define the \textbf{surplus multiplicity} $s_\varphi(\BB)$ to be zero if $\varphi(\BB)$ is a generalized open Berkovich disk, and to be $s_\varphi(\BB) = s$ if $\varphi(\BB) = \Berk$. As $\BB = \BB_x(\vec{v})^-$, we will also write $s_\varphi(x, \vec{v}) = s_\varphi(\BB)$.	 The intuition behind the terminology ``surplus multiplicity'' is that for $y \in \BB'$, there are always at least $m$ solutions to $\varphi(\zeta) = y$ with $\zeta \in \BB$, and there are $s_\varphi(\BB)$ ``extra'' solutions depending on the nature of $\varphi$ and $\BB$. 

	The surplus multiplicity gives a lower bound for the number of pre-images of a given point inside certain open Berkovich disks. This fact --- which follows immediately from Proposition~\ref{Prop: Rivera-Letelier Mapping} --- is extremely important for bounding the number of connected components of $\Ram_\varphi$. 

\begin{cor}
\label{Rem: Lower Bound}
	Let $\varphi \in k(z)$ be a nonconstant rational function, and let $\BB$ be a generalized open Berkovich disk. For each $y \in \Berk$, 
	\[
		\#\{\zeta \in \BB : \varphi(\zeta) = y\} \geq s_\varphi(\BB).
	\] 
\end{cor}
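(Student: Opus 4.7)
The plan is to deduce the corollary directly from Proposition~\ref{Prop: Rivera-Letelier Mapping} by splitting into the two cases that define the surplus multiplicity, so only a short unpacking argument is needed.

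First I would write $\BB = \BB_x(\vec{v})^-$ for some $x \in \Berk$ and $\vec{v} \in T_x$, and set $\BB' = \BB_{\varphi(x)}(\varphi_*(\vec{v}))^-$ and $m = m_\varphi(x, \vec{v})$, matching the notation of Proposition~\ref{Prop: Rivera-Letelier Mapping}. By that proposition exactly one of two mutually exclusive alternatives holds: either $\varphi(\BB) = \BB'$ or $\varphi(\BB) = \Berk$.

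In the first case, the surplus multiplicity is $s_\varphi(\BB) = 0$ by definition, so the claimed inequality $\#\{\zeta \in \BB : \varphi(\zeta) = y\} \geq 0$ is trivial and holds for every $y \in \Berk$ (with the usual convention that the empty multiset has cardinality $0$).

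In the second case, $s_\varphi(\BB) = s$, where $s$ is the positive integer produced by part~\eqref{Item: Surplus Multiplicity} of Proposition~\ref{Prop: Rivera-Letelier Mapping}. For any $y \in \Berk$ there are now two subcases. If $y \in \BB'$, then the fiber $\varphi^{-1}(y) \cap \BB$ has cardinality $s + m \geq s$ when counted with multiplicities; if $y \in \Berk \smallsetminus \BB'$, then the fiber has cardinality exactly $s$. In either subcase $\#\{\zeta \in \BB : \varphi(\zeta) = y\} \geq s = s_\varphi(\BB)$, which finishes the proof.

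There is no real obstacle here; the only thing to be slightly careful about is that Proposition~\ref{Prop: Rivera-Letelier Mapping} partitions $\Berk$ as $\BB' \sqcup (\Berk \smallsetminus \BB')$ and gives a lower bound of $s$ on each piece, so the uniform bound by $s_\varphi(\BB)$ is obtained simply by taking the minimum of the two counts.
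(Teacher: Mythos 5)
Your proof is correct and is exactly the intended unpacking of Proposition~\ref{Prop: Rivera-Letelier Mapping}: in the case $\varphi(\BB)=\BB'$ the surplus multiplicity is $0$ and the bound is trivial, and in the case $\varphi(\BB)=\Berk$ every fiber meets $\BB$ in at least $s=s_\varphi(\BB)$ points counted with multiplicity. The paper simply asserts that the corollary follows immediately from that proposition, and your argument is the routine case split it has in mind.
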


	The surplus multiplicity of a disk is closely tied to the number of critical points contained within it. The following result  is the key to bounding the number of connected components of the ramification locus. 
	
\begin{prop}
\label{Prop: Critical surplus}
	Let $\varphi \in k(z)$ be a nonconstant rational function. Suppose $x \in \Berk$ is a type~II point and $\vec{v} \in T_x$ is a tangent direction such that $p \nmid m_\varphi(x, \vec{v}) $. Then we have the equality
	\[
		\sum_{c \in \Crit(\varphi) \cap \BB_x(\vec{v})^-} w_\varphi(c) 
			= 2 s_\varphi(x, \vec{v}) + m_\varphi(x, \vec{v}) - 1.
	\]
\end{prop}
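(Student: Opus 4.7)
My plan is to reduce the identity to a direct Newton polygon calculation for $\wronsk_\varphi$ on the standard open unit disk. The key input is the identification $\mathrm{ord}_{z=c}\wronsk_\varphi = w_\varphi(c)$ for every affine $c$, verified by computing $\varphi'$ separately at ordinary points (where $\varphi' = \wronsk_\varphi/g^2$) and at poles of $\varphi$ (where one instead differentiates $1/\varphi$ at the pole). With this in hand, the left side of the claim equals the number of roots of $\wronsk_\varphi$ in $\BB_x(\vec{v})^-$, counted with multiplicity.

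To normalize, I will use Corollary~\ref{Cor: Coord Change} together with the transitivity of $\PGL_2(k)$ on type~II points and their tangent directions to assume $x = \zeta_{0,1}$, that $\vec{v}$ corresponds to $0 \in \PP^1(\tilde k)$, that $\varphi(\zeta_{0,1}) = \zeta_{0,1}$, and that $\varphi_*(\vec{v})$ also corresponds to $0$. Then $\BB_x(\vec{v})^- = \DD(0,1)^-$ and likewise in the target. Writing $\varphi = f/g$ in normalized form, the hypothesis that $\tilde\varphi$ is nonconstant forces $\|f\| = \|g\| = 1$. Let $\alpha, \beta$ be the orders of vanishing of $\tilde f, \tilde g$ at $0$, so $\alpha - \beta = m$; by the standard Newton polygon count, the number of roots of $f$ (resp.\ $g$) in $\DD(0,1)^-$ is exactly $\alpha$ (resp.\ $\beta$). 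On the other hand, Proposition~\ref{Prop: Rivera-Letelier Mapping} identifies these counts with $m+s$ and $s$ respectively, as the preimages in $\DD(0,1)^-$ of the points $0 \in \BB'$ and $\infty \notin \BB'$. Hence $\alpha = m+s$ and $\beta = s$.

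The heart of the argument is to pin down the coefficient $c_j$ of $z^j$ in $\wronsk_\varphi$ using~\eqref{Eq: Explicit Wronskian}. A summand $a_i b_{j+1-i}$ has absolute value $1$ exactly when $i \geq \alpha$ and $j+1-i \geq \beta$, which is impossible for $j < \alpha+\beta-1$. At $j = \alpha+\beta-1$ the unique such index is $i = \alpha$, producing the dominant term $(\alpha-\beta)\, a_\alpha b_\beta = m\, a_\alpha b_\beta$. The hypothesis $p \nmid m$ enters here decisively, giving $|m| = 1$ and hence $|c_{\alpha+\beta-1}| = 1$. Since $|c_j| \leq 1$ for all $j$, the Gauss norm $\|\wronsk_\varphi\| = 1$, and the Newton polygon will deliver exactly $\alpha + \beta - 1 = 2s + m - 1$ roots in $\DD(0,1)^-$, counted with multiplicity. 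This is the total weight claimed.

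The main obstacle I foresee is the bookkeeping of the normalization step: one must simultaneously normalize source, target, $\vec{v}$, $\varphi_*(\vec{v})$, and the representation $f/g$, and verify that the Newton polygon invariants $\alpha$ and $\beta$ match the analytic counts supplied by Proposition~\ref{Prop: Rivera-Letelier Mapping}. Once those identifications are in place the computation is essentially mechanical, and it is easy to see why $p \nmid m$ must appear exactly where it does --- wild ramification would spoil the single identity $|c_{\alpha+\beta-1}| = 1$, allowing extra critical weight to hide inside the disk without being detected by the leading Wronskian coefficient.
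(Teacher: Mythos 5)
Your proof is correct, and it reaches the same target identity $\ord_{z=0}\widetilde{\wronsk}_\varphi = 2s + m - 1$ by a genuinely different computation. Where the paper factors $\tilde f = h f_1$, $\tilde g = h g_1$ with $h = \gcd(\tilde f, \tilde g)$ and then uses the product rule to get the structural identity $\widetilde{\wronsk}_\varphi = \wronsk_{\tilde\varphi}\cdot h^2$, so that $\ord_{z=0}\widetilde{\wronsk}_\varphi = (m-1) + 2s$ splits transparently into the tame contribution $m-1$ (using $p\nmid m$) and the squared surplus contribution $2s$, you instead read off the leading Wronskian coefficient $c_{\alpha+\beta-1}$ directly from~\eqref{Eq: Explicit Wronskian} and estimate the lower ones. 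That makes the cancellation/cutoff at $j = \alpha+\beta-1$ explicit and shows, in the ultrametric, that the dominant summand is $m\,a_\alpha b_\beta$; the condition $p\nmid m$ enters exactly to guarantee $|m|=1$. What the paper's route buys is the cleaner algebraic explanation of the factor $2s$ (it is literally $h^2$); what yours buys is that you never need the paper's auxiliary coordinate change moving all critical points into $D(0,1)$, since the Newton polygon argument from $\ord_{z=0}\widetilde{\wronsk}_\varphi = j_0$ gives the count of roots of modulus $<1$ without any side condition on the roots of modulus $\geq 1$. Your identification $\alpha = m+s$, $\beta = s$ via Proposition~\ref{Prop: Rivera-Letelier Mapping} is essentially the same argument the paper packages as Lemma~\ref{Lem: Surplus Multiplicity}. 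Two small points worth tightening in a final write-up: (i) the phrase ``exactly when $i\geq\alpha$ and $j+1-i\geq\beta$'' should be ``only when,'' which is the direction you actually use; (ii) it is worth noting explicitly (as the paper does) that $p\nmid m$ forces $\varphi$ separable, so the critical sum is a finite count of roots of the nonzero polynomial $\wronsk_\varphi$ --- your computation shows $\wronsk_\varphi\neq 0$ as a byproduct, but saying so up front makes the bookkeeping honest.
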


Before starting the proof, we give an alternate description of the surplus multiplicity at the Gauss point. Let $\varphi = f / g$ be normalized. The surplus multiplicity is invariant under postcomposition by an element of $\PGL_2(k)$, so it suffices to assume that $\varphi(\zeta_{0,1}) = \zeta_{0,1}$, in which case $\varphi$ has nonconstant reduction. In particular, this means that each of $f$ and $g$ has a coefficient with absolute value~1. 
	
	Write $F(X,Y) = Y^{\deg(\varphi)}f(X/Y)$ and $G(X,Y) = Y^{\deg(\varphi)}g(X/Y)$ for the homogenizations of $f$ and $g$. Write $\tilde{F}$ and $\tilde{G}$ for the reductions of $F$ and $G$, respectively; these reductions are nonzero since $f$ and $g$ each has a coefficient with absolute value~1. Let $H = \gcd(\tilde{F}, \tilde{G}) \in \tilde{k}[X,Y]$; it exists since $\tilde{F}$ and $\tilde{G}$ are homogeneous, and it is unique up to multiplication by a nonzero element of the residue field. 
	
	Now let $a \in \PP^1(\tilde{k})$, and write $\BB = \BB_{\zeta_{0,1}}(\vec{a})^-$ for the corresponding open Berkovich disk. We claim that the surplus multiplicity of $\BB$ is equal to the multiplicity of $a$ as a root of $H$. To see it, change coordinates on the source and target by an element of $\PGL_2(k^\circ)$ so that $\vec{a} = \varphi_*(\vec{a}) = \vec{0}$. The induced map $T_{\zeta_{0,1}} \to T_{\zeta_{0,1}}$ on sets of tangent vectors is given in homogeneous coordinates by
	\[
		\varphi_* = \tilde{\varphi} = \left( \frac{\tilde{F}}{H} : \frac{\tilde{G}}{H} \right).
	\]
Since $\varphi_*$ maps $\vec 0$ to $\vec 0$ with multiplicity $m = m_\varphi(\zeta_{0,1}, \vec{0})$, we see that $X^m \mid\mid \tilde{F} / H$. Let $S \geq 0$ be defined by $X^S \mid \mid H$. It follows that $X^{m+S}$ evenly divides $\tilde{F}$, or equivalently that $F$ has $m + S$ zeros in the disk $D(0,1)^-$ (counted with multiplicity). In fact, this same conclusion holds with zero replaced by any $y \in D(0,1)^-$, which shows that $S = s_\varphi(\zeta_{0,1}, \vec{0})$ is the surplus multiplicity of the disk $\DD(0,1)^-$. We summarize this conclusion as

\begin{lem}
\label{Lem: Surplus Multiplicity}
	Let $\varphi = f/g \in k(z)$ be a nonconstant normalized rational function with nonconstant reduction. Set $F(X,Y) = Y^{\deg(\varphi)}f(X/Y)$ and $G(X,Y) = Y^{\deg(\varphi)}g(X/Y)$ for the homogenizations of $f$ and $g$, respectively, and let $H = \gcd(\tilde{F}, \tilde{G})$ be a greatest common divisor of their reductions. For each $a \in \PP^1(\tilde{k})$, the surplus multiplicity of the disk $\BB_{\zeta_{0,1}}(\vec a)^-$ is equal to the multiplicity of $a$ as a root of~$H$. 
\end{lem}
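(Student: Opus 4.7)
The plan is to observe that the paragraphs immediately preceding the lemma already contain the argument in the special case $a = 0$ with $\varphi_*(\vec 0) = \vec 0$, and to reduce the general statement to this special case by a $\PGL_2(k^\circ)$-change of coordinates on source and target.

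For the special case I would tighten the counting as follows. The identities $X^m \mid\mid (\tilde F/H)$ and $X^S \mid\mid H$ (with $S = \ord_{X=0} H$) yield $X^{m+S} \mid\mid \tilde F$. For any $y \in D(0,1)^- \subset k$, the polynomial $F(X,1) - y\,G(X,1) \in k^\circ[X]$ has reduction $\tilde F(X,1)$ (because $|y|<1$), with $X = 0$ a root of exact multiplicity $m + S$; a Newton polygon argument then produces precisely $m + S$ roots of $F(X,1) - y\,G(X,1)$ in $D(0,1)^-$ counted with multiplicity. Matching this with part~\eqref{Item: Surplus Multiplicity} of Proposition~\ref{Prop: Rivera-Letelier Mapping}, which predicts $m + s_\varphi(\zeta_{0,1}, \vec 0)$ such preimages inside $\BB_{\zeta_{0,1}}(\vec 0)^-$, gives $S = s_\varphi(\zeta_{0,1}, \vec 0)$.

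For arbitrary $a \in \PP^1(\tilde k)$, set $\vec b = \varphi_*(\vec a)$. Since reduction $\PGL_2(k^\circ) \to \PGL_2(\tilde k)$ is surjective, I would choose $\sigma_1, \sigma_2 \in \PGL_2(k^\circ)$ with $\tilde\sigma_1(a) = 0$ and $\tilde\sigma_2(b) = 0$, and put $\psi = \sigma_2 \circ \varphi \circ \sigma_1^{-1}$, suitably renormalized. Each $\sigma_i$ fixes the Gauss point and acts on $T_{\zeta_{0,1}}$ via its reduction, so $\sigma_1$ restricts to a bijection $\BB_{\zeta_{0,1}}(\vec a)^- \simarrow \BB_{\zeta_{0,1}}(\vec 0)^-$ intertwining $\varphi$ and $\psi$ (together with the analogous identification at the target). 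This yields $s_\psi(\zeta_{0,1}, \vec 0) = s_\varphi(\zeta_{0,1}, \vec a)$, and by the special case this common value equals $\ord_{X=0} H_\psi$, where $H_\psi = \gcd(\tilde F_\psi, \tilde G_\psi)$. A direct matrix calculation — writing $\sigma_i$ as $L_i \in \mathrm{GL}_2(k^\circ)$, so that $(F_\psi, G_\psi) = L_2 \cdot (F, G) \circ L_1^{-1}$ up to a unit — shows that after reduction $H_\psi = c \cdot H \circ \tilde L_1^{-1}$ for some $c \in \tilde k^\times$ (post-composition by the invertible $\tilde L_2$ does not affect the gcd, while pre-composition by $\tilde L_1^{-1}$ composes $H$ with $\tilde L_1^{-1}$). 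Since $\tilde L_1^{-1}(0) = a$, the multiplicity of $0$ as a root of $H_\psi$ equals the multiplicity of $a$ as a root of $H$, finishing the argument.

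The main obstacle is the bookkeeping for $H_\psi$ under renormalization: one must check that, after multiplying $(F_\psi, G_\psi)$ by a suitable element of $k^\times$ to normalize $\psi$, the reductions $\tilde F_\psi, \tilde G_\psi$ remain nonzero of the right degree and the relation $H_\psi = c \cdot H \circ \tilde L_1^{-1}$ survives up to a $\tilde k^\times$-scalar. This hinges on the fact that $\PGL_2(k^\circ)$ preserves the Gauss point and the degree of the reduction, ruling out unexpected cancellation.
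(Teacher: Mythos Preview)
Your proposal is correct and follows essentially the same approach as the paper: the paragraphs preceding the lemma already reduce to the case $\vec a = \varphi_*(\vec a) = \vec 0$ via a $\PGL_2(k^\circ)$-change of coordinates on source and target, and then count the $m+S$ zeros of $F$ in $D(0,1)^-$ exactly as you do. The only difference is that the paper leaves the bookkeeping on how $H$ transforms under this coordinate change implicit, whereas you spell out the identity $H_\psi = c\cdot H\circ \tilde L_1^{-1}$ explicitly; your renormalization worry is harmless since lifts $L_i \in \mathrm{GL}_2(k^\circ)$ with unit determinant already keep $\psi$ normalized.
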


	If $x \in \Berk$ is a type~II point and $\varphi \in k(z)$ is any nonconstant rational function, then an immediate consequence of this characterization of the surplus multiplicity and the Algebraic Reduction Formula is the following:
	\be
	\label{Eq: Surplus Balance Formula}
		m_\varphi(x) + \sum_{\vec{v} \in T_x}s_\varphi(x, \vec{v}) = \deg(\varphi).
	\ee
And while we will not need it in what follows, this formula actually holds at any $x \in \Berk$. The proof is trivial for points of type~I or type~IV since there is only one tangent direction to consider, and one can use Corollary~\ref{Cor: Compatible Multiplicities} below to reduce the type~III case to the type~II case.

\begin{proof}[Proof of Proposition~\ref{Prop: Critical surplus}]
	Change coordinates on the source and target so that $x = \varphi(x) = \zeta_{0,1}$ and $\BB_x(\vec{v})^- = \DD(0,1)^-$.	Note that $\varphi$ must be separable, else its reduction $\widetilde{\varphi}$ will be inseparable, so that $m_\varphi(x, \vec{v}) \geq p$ by the Algebraic Reduction Formula. In particular, $\varphi$ has only finitely many critical points, so there are only finitely many connected components of $\Berk \smallsetminus \{\zeta_{0,1}\}$ that contain one. After a further change of coordinate on the source if necessary, we may assume that no critical point lies in the open Berkovich disk $\BB_{\zeta_{0,1}}(\vec{\infty})^-$; equivalently, each critical point has absolute value at most~1. 
	
	We may suppose $\varphi = f / g$ is normalized, and set $h = \gcd(\tilde{f}, \tilde{g})$ with $h$ monic. Write $\tilde{f} = h f_1$ and $\tilde{g} = h g_1$ for some $f_1, g_1 \in \tilde{k}[z]$. We see that $f_1$ vanishes to order $m = m_\varphi(\zeta_{0,1}, \vec{0})$ at the origin, and $g_1(0) \neq 0$. As $p \nmid m$, we have
	\[
		\ord_{z = 0} \wronsk_{\widetilde{\varphi}} 
			= \ord_{z=0}( f_1'  g_1 - f_1  g_1') = m - 1.	
	\]

	Since $\varphi = f/g$ is normalized, we see that $\wronsk_\varphi \in k^\circ[z]$, and we may compute
	\begin{align*}
		\widetilde{\wronsk}_\varphi &= \tilde{f}'  \tilde{g} - \tilde{f}  \tilde{g}' \\
			&= (h' f_1 + h f_1')hg_1 - hf_1(hg_1' + h' g_1) \\
			&=  \wronsk_{\widetilde{\varphi}} \cdot h^2.
	\end{align*}
With our choice of coordinates, all of the roots of $\wronsk_\varphi$ have absolute value at most~1, and hence 
	\[
		\sum_{c \in \Crit(\varphi) \cap \DD(0,1)^-} w_\varphi(c) = \ord_{z=0} \widetilde{\wronsk}_\varphi 
			= 2s_\varphi(\zeta_{0,1}, \vec{0}) + m_\varphi(\zeta_{0,1}, \vec{0}) - 1,
	\]
where $\ord_{z=0}(h) = s_\varphi(\zeta_{0,1}, \vec{0})$ follows upon dehomogenizing Lemma~\ref{Lem: Surplus Multiplicity}.
\end{proof}

	We now give another useful description of the surplus multiplicity of an open Berkovich disk $\BB$ as a sum of ``jumps'' in the multiplicity function inside $\BB$. 
	
\begin{prop}
\label{Prop: Jumping surplus}
	Let $\varphi \in k(z)$ be a nonconstant rational function, and let $\BB$ be a generalized open Berkovich disk with boundary point $\zeta$. Then
	\[
		s_\varphi(\BB) = \sum_{y \in \BB} 
			\max\left\{ m_\varphi(y) - m_\varphi(y, \vec{v}_\zeta), \; 0 \right\},
	\]
where $\vec{v}_\zeta$ is the unique tangent vector at $y$ containing $\zeta$. 
\end{prop}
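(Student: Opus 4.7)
First I would observe simplifications that remove both the $\max$ and most terms of the sum. By Proposition~\ref{Prop: Expansion Properties}\eqref{Item: Tangent Properties}, at any point $y$ of type~I, III, or IV the identity $m_\varphi(y) = m_\varphi(y, \vec{v})$ holds for every $\vec{v} \in T_y$, so such points contribute nothing. At a type~II point $y \in \BB$, applying the Directional Multiplicity Formula (Proposition~\ref{Prop: Expansion Properties}\eqref{Item: Directional Sum}) with $\vec{w} = \varphi_*(\vec{v}_\zeta)$ gives
\[
m_\varphi(y) - m_\varphi(y, \vec{v}_\zeta) \;=\; \sum_{\substack{\vec{v}' \in T_y \setminus \{\vec{v}_\zeta\} \\ \varphi_*(\vec{v}') = \varphi_*(\vec{v}_\zeta)}} m_\varphi(y, \vec{v}') \;\geq\; 0,
\]
so the $\max$ is redundant and the right-hand side of the proposition is a nonnegative sum supported on type~II points of $\BB$.

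The main step is to prove the following telescoping identity by induction on $|Y|$: for every finite set $Y$ of type~II points of $\BB$, letting $T_Y$ denote the convex hull of $\{\zeta\} \cup Y$ in $\Berk$ and $\{D_j\}$ the connected components of $\BB \setminus T_Y$,
\[
s_\varphi(\BB) \;=\; \sum_{y \in Y} \bigl(m_\varphi(y) - m_\varphi(y, \vec{v}_\zeta^{(y)})\bigr) + \sum_j s_\varphi(D_j). \qquad (\ast)
\]
The collection $\{D_j\}$ may be infinite, but $\sum_j s_\varphi(D_j)$ is nonetheless finite: Corollary~\ref{Rem: Lower Bound} applied to a generic $w \in \Berk \setminus \BB'$ bounds the sum of nonnegative integer surpluses above by $\deg(\varphi)$, so only finitely many terms are nonzero. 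The base case $Y = \emptyset$ is trivial since then $T_Y = \{\zeta\}$ and $\BB$ is its only complementary component. For the inductive step, a new type~II vertex $y^*$ lies in a unique old component $D^* = \BB_{\zeta^*}(\vec{v}^*)^-$; refining the tree by the segment $[\zeta^*, y^*]$ replaces $D^*$ in the partition by the subdisks $\BB_{y^*}(\vec{v}')^-$ for $\vec{v}' \in T_{y^*} \setminus \{\vec{v}_{\zeta^*}^*\}$, together with $\BB_{y''}(\vec{v}'')^-$ for each type~II branch point $y''$ on $(\zeta^*, y^*)$ and each off-edge direction $\vec{v}''$. Preservation of $(\ast)$ reduces to the local identity
\[
s_\varphi(D^*) = \bigl(m_\varphi(y^*) - m_\varphi(y^*, \vec{v}_{\zeta^*}^*)\bigr) + \!\!\!\sum_{\vec{v}' \neq \vec{v}_{\zeta^*}^*}\!\!\! s_\varphi(\BB_{y^*}(\vec{v}')^-) + \sum_{(y'', \vec{v}'')} s_\varphi(\BB_{y''}(\vec{v}'')^-),
\]
which I would derive from the Surplus Balance Formula~\eqref{Eq: Surplus Balance Formula} applied at $y^*$ (and at each edge-interior branch point with positive directional surplus), matching $s_\varphi(\BB_{y^*}(\vec{v}_{\zeta^*}^*)^-)$ against $s_\varphi(D^*)$ corrected by those edge contributions.

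To conclude, I would choose $Y$ large enough that $\varphi(D_j) \neq \Berk$ for every remaining leaf component $D_j$, forcing $s_\varphi(D_j) = 0$. This is possible because, when $\varphi$ is separable, only finitely many type~II points of $\BB$ have positive jump $m_\varphi(y) > m_\varphi(y, \vec{v}_\zeta)$: by Proposition~\ref{Prop: Critical surplus} each such point absorbs positive critical weight, and Proposition~\ref{Prop: Hurwitz} caps the total weight by $2\deg(\varphi) - 2$. The inseparable case reduces to the separable setting by factoring $\varphi = \psi \circ F^\ell$ with $\psi$ separable, via Propositions~\ref{Prop: Frobenius} and~\ref{Prop: Multiplicity Properties}\eqref{Item: Multiplicativity}, since Frobenius factors contribute no jump and no surplus. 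The main obstacle I anticipate is the local identity in the inductive step, specifically the careful bookkeeping of the finitely many nonzero edge-interior branching contributions along $(\zeta^*, y^*)$ and the matching of $s_\varphi(\BB_{y^*}(\vec{v}_{\zeta^*}^*)^-)$ against $s_\varphi(D^*)$.
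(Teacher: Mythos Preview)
Your outline has the right shape, but the two steps you flag as potential difficulties are in fact genuine gaps that the Surplus Balance Formula does not close.

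First, the local identity in your inductive step is exactly the content of the proposition (applied to the single disk $D^*$ with a distinguished interior point $y^*$), and it does \emph{not} follow from~\eqref{Eq: Surplus Balance Formula}. If you substitute the Surplus Balance Formula at $y^*$ into your claimed identity, you are left needing
\[
s_\varphi(D^*) \;=\; \deg(\varphi) - m_\varphi(y^*,\vec{v}_{\zeta^*}^*) - s_\varphi\bigl(y^*,\vec{v}_{\zeta^*}^*\bigr) \;+\; (\text{edge terms}),
\]
and comparing with Surplus Balance at $\zeta^*$ reduces this to a nontrivial relation between $s_\varphi(\BB_{y^*}(\vec{v}_{\zeta^*}^*)^-)$ and the data at $\zeta^*$ that you have not supplied. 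The paper obtains the analogue of your local identity (its equation~\eqref{Eq: Surplus recursion}) not from~\eqref{Eq: Surplus Balance Formula} but by directly counting the pre-images of $\zeta' = \varphi(\zeta)$ inside the closed disk bounded by $y^*$, using Proposition~\ref{Prop: Rivera-Letelier Mapping} and the Directional Multiplicity Formula. That argument also requires knowing $\varphi$ is injective on $(\zeta^*,y^*)$, which the paper secures by restricting attention to jumping points \emph{visible from $\zeta$} (those with no earlier jumping point on the path to $\zeta$).

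Second, your finiteness argument does not work as stated. Proposition~\ref{Prop: Critical surplus} relates critical weight inside a disk to $2s+m-1$ under the hypothesis $p \nmid m_\varphi(x,\vec v)$; it does not assign positive critical weight to individual jumping points, and in positive residue characteristic the hypothesis can fail precisely where you need it. The paper instead bounds the number of \emph{visible} jumping points by observing from~\eqref{Eq: Surplus recursion} that each one forces at least one pre-image of $\zeta'$ into $\BB$, of which there are at most $\deg(\varphi)$; it then inducts on $s_\varphi(\BB)$ rather than on $|Y|$, so that the smaller disks $\BB_{y_i}(\vec v)^-$ appearing in the recursion have strictly smaller surplus. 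Your induction on $|Y|$ would work if you first proved the local identity by that pre-image count, but at that point you are essentially reproducing the paper's proof with extra bookkeeping.
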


\begin{remark}
	Since $m_\varphi(y, \vec{v}) = m_\varphi(x)$ for all $x \in (\zeta, y)$ sufficiently close to $y$, we can think of $\max\left\{ m_\varphi(y) - m_\varphi(y, \vec{v}_\zeta), \; 0 \right\}$ as the ``jump'' in multiplicity at $y$ along a path emanating from $\zeta$. 
\end{remark}

	Since the surplus multiplicity of a disk $\BB$ is positive if and only if $\varphi(\BB) = \Berk$, we obtain the following corollary. It appeared previously as \cite[Thm.~9.42]{Baker-Rumely_BerkBook_2010}. 

\begin{cor}
\label{Cor: Nonincreasing}
	Let $\varphi \in k(z)$ be a nonconstant rational function, and let $\BB$ be a generalized open Berkovich disk with boundary point $\zeta$. Then $\varphi(\BB)$ is a generalized open Berkovich disk if and only if for each $c \in \BB$, the multiplicity function $m_\varphi$ is nonincreasing on the directed segment $[\zeta, c]$.
\end{cor}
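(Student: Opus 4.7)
The plan is to derive the corollary directly from the Jumping Surplus formula of Proposition~\ref{Prop: Jumping surplus}. By the definitions in Section~\ref{Sec: Surplus Multiplicity}, the image $\varphi(\BB)$ is a generalized open Berkovich disk precisely when $s_\varphi(\BB) = 0$. Since every summand in Proposition~\ref{Prop: Jumping surplus} is non-negative, this vanishing is equivalent to the pointwise condition
\[
m_\varphi(y) \leq m_\varphi(y, \vec{v}_\zeta) \qquad \text{for every } y \in \BB. \qquad (\ast)
\]
It therefore suffices to show that $(\ast)$ is equivalent to the statement that $m_\varphi$ is nonincreasing along $[\zeta, c]$ for every $c \in \BB$.

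For the ``if'' direction, suppose $m_\varphi$ is nonincreasing on each such segment and fix $y \in \BB$. By Proposition~\ref{Prop: Directional Definition}\eqref{Item: Locally constant}, one can choose points $x$ on the open segment $(\zeta, y)$ arbitrarily close to $y$ satisfying $m_\varphi(x) = m_\varphi(y, \vec{v}_\zeta)$. Applying the nonincreasing hypothesis on $[\zeta, y]$ yields $m_\varphi(y) \leq m_\varphi(x) = m_\varphi(y, \vec{v}_\zeta)$, which is $(\ast)$.

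For the converse, assume $(\ast)$ and fix $c \in \BB$. I would analyze the behavior of $m_\varphi$ on $[\zeta, c]$ point by point: at any interior point $y$, Proposition~\ref{Prop: Directional Definition}\eqref{Item: Locally constant} says that $m_\varphi$ takes the constant value $m_\varphi(y, \vec{v}_\zeta)$ on an open subsegment of $[\zeta, y)$ abutting $y$, and the constant value $m_\varphi(y, \vec{v}_c)$ on an open subsegment of $(y, c]$ abutting $y$, where $\vec{v}_c \in T_y$ is the tangent direction containing $c$. The Directional Multiplicity Formula (Proposition~\ref{Prop: Expansion Properties}\eqref{Item: Directional Sum}) gives $m_\varphi(y, \vec{v}_c) \leq m_\varphi(y)$ since $\vec{v}_c$ appears as one of the summands, and hypothesis $(\ast)$ gives $m_\varphi(y) \leq m_\varphi(y, \vec{v}_\zeta)$. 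Thus the multiplicity weakly decreases as one passes through $y$ from the $\zeta$ side to the $c$ side.

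The one point requiring care is upgrading these pointwise observations to genuine monotonicity between arbitrary $y_1, y_2$ with $y_1$ on $[\zeta, y_2]$. I expect this to be straightforward: because $m_\varphi$ takes values in the finite set $\{1, \ldots, \deg(\varphi)\}$, is upper semicontinuous, and is constant on each open subsegment in a fixed tangent direction at any given point, the segment $[\zeta, c]$ decomposes into finitely many pieces on which $m_\varphi$ is constant, separated by at most finitely many jump points. Summing the pointwise weak decreases established above at each such point delivers $m_\varphi(y_1) \geq m_\varphi(y_2)$, completing the argument.
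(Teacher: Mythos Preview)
Your proof is correct and follows exactly the route the paper intends: the paper derives the corollary in one sentence from Proposition~\ref{Prop: Jumping surplus} (noting that $s_\varphi(\BB)=0$ iff $\varphi(\BB)$ is a disk), and you have simply supplied the details of the equivalence between the vanishing of all summands and the nonincreasing condition. The only place you could tighten things is the final paragraph: the cleanest way to pass from the pointwise inequalities $m_\varphi(y,\vec{v}_c)\le m_\varphi(y)\le m_\varphi(y,\vec{v}_\zeta)$ to global monotonicity is an infimum argument using upper semicontinuity, rather than first asserting finitely many constant pieces (which itself quietly uses the monotonicity you are proving).
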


\begin{proof}[Proof of Proposition~\ref{Prop: Jumping surplus}]
	For the purpose of this proof, let us make two ad hoc definitions. We will say that $y \in \BB$ is a \textbf{jumping point} if $m_\varphi(y) > m_\varphi(y, \vec{v}_\zeta)$; these are precisely the points that contribute to the sum in the proposition. Note that a jumping point is necessarily of type~II (Proposition~\ref{Prop: Expansion Properties}\eqref{Item: Tangent Properties}). We say that a jumping point $y$ is \textbf{visible from $\zeta$} if it is the unique jumping point on the path $(\zeta, y]$. 
	
	Let $y \in \BB$ be a jumping point that is visible from $\zeta$. We claim that $\varphi$ is injective on the segment $(\zeta, y)$. Otherwise $\varphi$ would have to backtrack on this segment, which would imply there is $x \in (\zeta, y)$ and a tangent vector $\vec{u}$ at $x$ such that $\varphi_*(\vec{v}_\zeta) = \varphi_*(\vec{u})$. But then 
	\[
		m_\varphi(x) \geq m_\varphi(x, \vec{v}_\zeta) + m_\varphi(x, \vec{u}) > m_\varphi(x, \vec{v}_\zeta),
	\]
so that $x$ is a jumping point. This contradicts the visibility of $y$. 

	 Define a closed Berkovich disk $D = \Berk \smallsetminus \BB_y(\vec{v}_\zeta)^-$. Write $\vec{w} = \varphi_*(\vec{v}_\zeta) \in T_{\varphi(y)}$. Let $\vec{v}_1, \ldots, \vec{v}_n \in T_y$ be the distinct tangent vectors at $y$ that are distinct from $\vec{v}_\zeta$ and that satisfy $\varphi_*(\vec{v}_i) = \vec{w}$. Let $\vec{v}_{n+1}, \ldots, \vec{v}_N$ be the remaining tangent vectors distinct from $\vec{v}_\zeta$ satisfying $s_\varphi(y, \vec{v}_i) > 0$. Set $\zeta' = \varphi(\zeta)$. By the Directional Multiplicity Formula and Proposition~\ref{Prop: Rivera-Letelier Mapping}, we find that 
	 \be
	 \label{Eq: Surplus recursion}
	 	\ba
	 	\#\left(\varphi^{-1}(\zeta') \cap D\right) &= \sum_{i=1}^n m_\varphi(y, \vec{v}_i) + \sum_{i=1}^N s_\varphi(y, \vec{v}_i) \\
			&= m_\varphi(y) - m_\varphi(y, \vec{v}_\zeta) 
				+ \sum_{\vec{v} \in T_y \smallsetminus \{\vec{v}_\zeta\}} s_\varphi(y, \vec{v}). 
	 	\ea
	\ee
Here we are counting pre-images of $\zeta'$ with multiplicities. Since $y$ is a jumping point, we conclude that $D$ contains a pre-image of $\zeta'$. It follows that there can be only finitely many jumping points visible from $\zeta$. 

	We now complete the proof by induction on the surplus multiplicity $s = s_\varphi(\BB)$. If $s = 0$, then $\zeta' \not\in \varphi(\BB)$. A visible jumping point $y \in \BB$ would enable us to apply \eqref{Eq: Surplus recursion}; the right side would be strictly positive, which would force the existence of a pre-image of $\zeta'$ in $\BB$. Hence there can be no jumping point when $s = 0$ and the proposition is proved. Suppose now that the proposition holds for all open Berkovich disks with surplus multiplicity at most $s-1$ for some $s \geq 1$, and let $\BB$ be a disk with surplus multiplicity $s$. Let $y_1, \ldots, y_\ell$ be the jumping points in $\BB$ that are visible from $\zeta$. For $i = 1, \ldots, \ell$, let $D_i = \Berk \smallsetminus \BB_{y_i}(\vec{v}_\zeta)^-$ be the closed Berkovich disk with boundary point $y_i$ that does not contain $\zeta$. The argument given in the first paragraph shows that any path $(\zeta, y]$ that contains a pre-image of $\zeta'$ must necessarily contain a jumping point. Hence, \eqref{Eq: Surplus recursion} gives
	\benn
		\ba
		\#\left(\varphi^{-1}(\zeta') \cap \BB \right) &= \sum_{i = 1}^\ell \#\left(\varphi^{-1}(\zeta') \cap D_i \right) \\
			&= \sum_{i = 1}^\ell \left( m_\varphi(y_i) - m_\varphi(y_i,\vec{v}_\zeta) \right)+ \sum_{i=1}^\ell \sum_{\vec{v} \in T_{y_i} \smallsetminus \{\vec{v}_\zeta\}} s_\varphi(y_i, \vec{v}) .
		\ea
	\eenn
But now observe that each of the surplus multiplicities $s_\varphi(y_i, \vec{v})$ is necessarily smaller than $s$, so that we may apply the inductive hypothesis to each of the open disks $\BB_{y_i}(\vec{v})^-$. We conclude that
	\[
		\#\left(\varphi^{-1}(\zeta') \cap \BB \right) = \sum_{y \in \BB} \max\left\{ m_\varphi(y) - m_\varphi(y, \vec{v}_\zeta), \; 0 \right\}.
	\]
Proposition~\ref{Prop: Rivera-Letelier Mapping} shows that the pre-image count on the left hand side is precisely $s_\varphi(\BB)$. 
\end{proof}


\section{Extension of Scalars}
\label{Sec: Base Change}

	Let $K / k$ be an extension of algebraically closed and complete non-Archimedean fields, where as usual we assume the absolute value on $k$ is nontrivial (and hence also on $K$). To distinguish between objects defined over $k$ and those over $K$, we will decorate our notation with subscripts. For example, $D_k(0,1)$ and $\Berk_k$ will denote the classical closed unit disk and the Berkovich projective line defined over $k$, respectively. 

	We will be occupied for most of this section with the proof of the following result.

\begin{thm}
\label{Thm: Base Change}
	To each extension $K / k$ of algebraically closed and complete non-Archimedean fields, there exists a 
	canonical inclusion map $\iota_k^K : \Berk_k \to \Berk_K$ with the following properties:
	\begin{enumerate}
		\item $\iota_k^K(\zeta_{k,\aa,\rr}) = \zeta_{K,\aa,\rr}$ for each decreasing sequence of closed disks
			$D_k(\aa, \rr) = \left(D(a_i, r_i)\right)_{i \geq 0}$ with $a_i \in k$ and $r_i \in \RR_{\geq 0}$. 
			In particular, $\iota_k^K$ extends the natural inclusion $\PP^1(k) \hookrightarrow \PP^1(K)$ 
			on classical points.
			
		\item If $K' / K$ is a further extension, then $\iota_k^{K'} = \iota_{K}^{K'} \circ \iota_k^K$.
		
		\item $\iota_k^K$ is continuous  for the weak topologies on $\Berk_k$ and $\Berk_K$. 
			In particular, $\iota_k^K(\Berk_k)$ is a compact subset of $\Berk_K$ for the weak topology.
		
		\item $\iota_k^K$ is an isometry for the path-distance metric $\rho$.
		
		\item Write $\iota = \iota_k^K$. 
			For each $x \in \Berk_k$, there exists an injective map $\iota_*:T_x \to T_{\iota(x)}$ such that 
			property that  $\iota(\BB_x(\vec{v})^-) \subset \BB_{\iota(x)}(\iota_*(\vec{v}))^-$  
			for every $\vec{v} \in T_x$. 
	\end{enumerate}
\end{thm}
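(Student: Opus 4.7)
The plan is to define $\iota = \iota_k^K$ directly from the Berkovich classification: every $x \in \Berk_k$ is $\zeta_{k,\aa,\rr}$ for some decreasing sequence of closed disks $D(a_i, r_i) \subset k$, unique up to cofinality, and we set $\iota(\zeta_{k,\aa,\rr}) := \zeta_{K,\aa,\rr}$ (with the natural extension $\PP^1(k) \hookrightarrow \PP^1(K)$ at $\infty$). First I would check well-definedness and injectivity: since cofinality of two nested disk sequences $(D(a_i, r_i))$ and $(D(a_i', r_i'))$ is a statement about the $a_i, r_i, a_i', r_i'$ alone, it is preserved under enlarging the ambient field, and distinct (non-cofinal) sequences over $k$ remain non-cofinal over $K$ and hence define distinct seminorms on $K[T]$. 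Properties~(1) and~(2) fall out of the definition.

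Next, for (4), I would compute $\diam(\iota(x)) = \diam(x)$ directly from the construction and verify $\iota(x \vee y) = \iota(x) \vee \iota(y)$: if $x = \zeta_{k,a,r}$ and $y = \zeta_{k,b,s}$, then $x \vee y$ corresponds to the smallest closed disk containing $D(a,r) \cup D(b,s)$, whose center can be chosen in $k$ and whose radius $\max(r,s,|a-b|)$ depends only on the data intrinsic to $(a,b,r,s)$; extending by limits handles type~IV. The isometry for $\rho$ then follows from its defining formula.

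For (3), I would reduce continuity to showing that, for each $\alpha \in K$ and $s > 0$, the preimage $\iota^{-1}(\DD_K(\alpha, s)^-)$ is weakly open in $\Berk_k$ (and similarly for complements of closed disks). The key computation is that for $x = \zeta_{k,a,r}$ with $a \in k$, the ultrametric gives
\[
 |(T-\alpha)(\iota(x))| \;=\; \sup_{b \in D_K(a,r)} |b - \alpha| \;=\; \max(r, |a-\alpha|),
\]
with an analogous limiting formula on type~IV points. If this quantity is $<s$, then $r < s$ and $|a-\alpha| < s$, so any $y = \zeta_{k,b,r'}$ in the weak-open disk $\DD_k(a,s)^-$ satisfies $r' < s$ and $|b - \alpha| \le \max(|b-a|, |a-\alpha|) < s$ by the ultrametric, hence lies in $\iota^{-1}(\DD_K(\alpha,s)^-)$; the opposite strict inequality is handled symmetrically. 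Compactness of $\iota(\Berk_k)$ then follows from compactness of $\Berk_k$ and continuity.

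Finally, for (5), given $\vec v \in T_x$, the set $\BB_x(\vec v)^-$ is connected, so by continuity $\iota(\BB_x(\vec v)^-)$ is a connected subset of $\Berk_K$ disjoint from $\iota(x)$ (by injectivity), and therefore lies in a single connected component of $\Berk_K \smallsetminus \{\iota(x)\}$, which we take as $\iota_*(\vec v)$. For injectivity of $\iota_*$, pick $y \in \BB_x(\vec v)^-$ and $y' \in \BB_x(\vec v')^-$ for $\vec v \neq \vec v'$; then $y \vee y' = x$, and by the join-preservation from step~(4), $\iota(y) \vee \iota(y') = \iota(x)$, forcing $\iota(y)$ and $\iota(y')$ into distinct components of $\Berk_K \smallsetminus \{\iota(x)\}$. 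The step I expect to be the main obstacle is the continuity argument in~(3), since the basic weak-opens in $\Berk_K$ are defined by polynomials $T - \alpha$ with $\alpha \in K \smallsetminus k$, forcing one to produce explicit formulas for $|(T-\alpha)(\iota(x))|$ intrinsic to the disk data of $x$ and to verify they vary continuously in the weak topology on $\Berk_k$; the remaining properties are formal consequences of the disk-sequence definition.
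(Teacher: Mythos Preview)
Your approach matches the paper's almost exactly: define $\iota$ via disk sequences, check cofinality is preserved, verify order/join/diameter compatibility for~(4), and use continuity plus injectivity to build $\iota_*$ for~(5). Your identification of~(3) as the crux is accurate.

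Two points where your sketch is looser than the paper's argument:

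\textbf{Part~(5), join claim.} The assertion ``$y \vee y' = x$'' for $y \in \BB_x(\vec v)^-$ and $y' \in \BB_x(\vec v')^-$ is false when one of the directions, say $\vec v$, is the one toward $\infty$: then one may have $y' \prec x \prec y$, whence $y \vee y' = y \neq x$. The paper handles this by a short case split: if $\infty \in \BB_x(\vec v)^-$, pick $y$ with $x \prec y$ and use order-preservation to get $\iota(y') \prec \iota(x) \prec \iota(y)$; only when neither direction contains $\infty$ does your join argument apply directly. (Equivalently, one can argue that $x$ lies on the segment $[y,y']$ and that $\iota$ maps segments to segments.)

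\textbf{Part~(3), closed disks.} Your ``handled symmetrically'' for preimages of closed disks hides a genuine asymmetry: when $\alpha \in K \smallsetminus k$ and $D_K(\alpha,s) \cap k = \emptyset$, the preimage $\iota^{-1}(\DD_K(\alpha,s))$ need not be empty, because a type~IV point of $\Berk_k$ can land on the boundary point $\zeta_{K,\alpha,s}$. The paper's organization---split into the cases $D_K(\alpha,s)^{(-)} \cap k \neq \emptyset$ (where one may take $\alpha \in k$ and get $\iota^{-1}(\DD_K(\alpha,s)^{(-)}) = \DD_k(\alpha,s)^{(-)}$ on the nose) versus $D_K(\alpha,s)^{(-)} \cap k = \emptyset$ (where the preimage is empty or a single point)---cleanly absorbs this subtlety. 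Your explicit formula $|(T-\alpha)(\iota(x))| = \max(r,|a-\alpha|)$ is correct and could be pushed through, but the closed-disk case is not literally symmetric to the open one.
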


\begin{remark}
	The map $\iota_k^K$ in the theorem is not a morphism of $k$-analytic spaces except in the case $k=K$, and so we cannot simply appeal to general principles in analytic geometry to determine its properties. Indeed, if it were $k$-analytic, then its construction below would imply the existence of a $k$-analytic morphism of Berkovich disks $\DD_k(0,1) \to \DD_K(0,1)$. Passing to rings of functions, there would exist a $k$-morphism of Tate algebras $K\{T\} \to k\{T\}$. The image of $K$ must lie in a subfield of $k\{T\}$ containing $k$, and so it must be $k$ itself. 
\end{remark}

\begin{remark}
	The existence $\iota_k^K$ and the fact that it is continuous for the weak topology also follow from Poineau's theory of universal points. See \cite[Cor.~3.7, 3.14]{Poineau_Angelique}. More generally, the hypothesis that $k$ is algebraically closed guarantees that the base extension morphism $\pi_{K/k}: \mathsf{X}_K \to \mathsf{X}$ has a continuous section for any analytic space $\mathsf{X}/k$ and any extension of non-Archimedean fields $K/k$. 
\end{remark}

	An important consequence of the continuity properties of the map $\iota_k^K$ is the following application to multiplicities of rational functions.

\begin{cor}
\label{Cor: Compatible Multiplicities}
	Let $K / k$ be an extension of algebraically closed and complete non-Archimedean fields, let $\varphi \in k(z)$ be a nonconstant rational function, and let $\iota = \iota_k^K: \Berk_k \to \Berk_K$ be the inclusion map from the theorem. The following assertions hold:
	
	\begin{enumerate}
		\item If $\varphi_K \in K(z)$ is given by extension 
			of scalars, then $\varphi_K \circ \iota = \iota \circ \varphi$.
			
		\item For every $x \in \Berk_k$, we have  
			$m_{\varphi_K}(\iota(x)) = m_\varphi(x)$. 
			In particular, $\iota^{-1}(\Ram_{\varphi_K}) = \Ram_\varphi$. 
			
		\item For each $x \in \Berk_k$ and each $\vec{v} \in T_x$,  we have
			\[
				m_{\varphi_K}(\iota(x), \iota_*(\vec{v})) = m_\varphi(x, \vec{v}) \ \ \text{ and } \ \ 
			        s_{\varphi_K}(\iota(x), \iota_*(\vec{v})) = s_\varphi(x, \vec{v}).
		    \]
	\end{enumerate}
\end{cor}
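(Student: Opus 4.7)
The plan is to prove Part~(1) from continuity and density, to prove Part~(2) uniformly in the type of $x$ via the topological characterization of multiplicity applied to open Berkovich disks defined over both $k$ and $K$, and to deduce Part~(3) from Part~(2) combined with Rivera-Letelier's local scaling (Proposition~\ref{Prop: Directional Definition}) and the pre-image count of Proposition~\ref{Prop: Rivera-Letelier Mapping}.

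For Part~(1), both $\varphi_K\circ\iota$ and $\iota\circ\varphi$ are continuous maps $\Berk_k\to\Berk_K$ for the weak topology by continuity of $\iota$ and of rational functions; they agree on the dense subset $\PP^1(k)$ of $\Berk_k$, and $\Berk_K$ is Hausdorff.

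For Part~(2), given $x\in\Berk_k$, after a $\PGL_2(k)$-coordinate change (which is $\iota$-compatible via Part~(1)) I can choose $a\in k$ and $r'>0$ such that $x\in U:=\DD_k(a,r')^-$ and $U$ is a $\varphi$-saturated connected component of $\varphi^{-1}(\DD_k(\varphi(a),s')^-)$ for some $s'>0$. Let $U_K:=\DD_K(a,r')^-$. The key points are: (a) $U\cap\PP^1(k)=U_K\cap\PP^1(k)=D_k(a,r')^-$ whether or not $r'\in|k^\times|$; (b) the component decomposition $\varphi^{-1}(\DD_k(\varphi(a),s')^-)=\bigsqcup_i\DD_k(a_i,r_i)^-$ (open Berkovich disks with classical centers, by Rivera-Letelier) extends identically to $\varphi_K^{-1}(\DD_K(\varphi(a),s')^-)=\bigsqcup_i\DD_K(a_i,r_i)^-$, so $U_K$ is $\varphi_K$-saturated; (c) for $b\in k$, $\varphi^{-1}(b)=\varphi_K^{-1}(b)\subseteq\PP^1(k)$, with field-independent classical multiplicities. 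Applying Proposition~\ref{Prop: Top Characterization} over each field at any $b\in\varphi(U)\cap k$ yields
\[
m_\varphi(x) = \#\bigl(U\cap\varphi^{-1}(b)\bigr) = \#\bigl(U_K\cap\varphi_K^{-1}(b)\bigr) = m_{\varphi_K}(\iota(x)),
\]
and $\iota^{-1}(\Ram_{\varphi_K}) = \Ram_\varphi$ follows.

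For Part~(3), I first prove the directional equality. By Proposition~\ref{Prop: Directional Definition}, there is $y\in\BB_x(\vec v)^-$ with $m_\varphi\equiv m_\varphi(x,\vec v)$ on $(x,y)$; by Part~(2), $m_{\varphi_K}(\iota(z))=m_\varphi(x,\vec v)$ for every $z\in(x,y)$. Applying Proposition~\ref{Prop: Directional Definition} over $K$ at $\iota(x)$ in direction $\iota_*(\vec v)$ yields an initial subsegment of $\BB_{\iota(x)}(\iota_*(\vec v))^-$ on which $m_{\varphi_K}$ is constant equal to $m_{\varphi_K}(\iota(x),\iota_*(\vec v))$. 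Because $\iota$ is continuous, injective, and an isometry for $\rho$ with $\iota(\BB_x(\vec v)^-)\subseteq\BB_{\iota(x)}(\iota_*(\vec v))^-$ (Theorem~\ref{Thm: Base Change}, parts (4) and (5)), the image arc $\iota((x,y))$ coincides with this subsegment near $\iota(x)$ by unique arcwise connectedness of $\Berk_K$; this forces $m_{\varphi_K}(\iota(x),\iota_*(\vec v))=m_\varphi(x,\vec v)$. For the surplus, Proposition~\ref{Prop: Rivera-Letelier Mapping} applied over each field shows that for $y\in\BB_{\varphi(x)}(\varphi_*(\vec v))^-\cap k$, the weighted pre-image count in $\BB_x(\vec v)^-$ (resp.\ in $\BB_{\iota(x)}(\iota_*(\vec v))^-$) equals the sum of the corresponding directional and surplus multiplicities. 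Since $\varphi_K^{-1}(y)=\varphi^{-1}(y)\subseteq\PP^1(k)$ and classical $k$-points of the two disks agree (by injectivity of $\iota_*$), the counts coincide; combining with the directional equality already proved gives $s_\varphi(x,\vec v)=s_{\varphi_K}(\iota(x),\iota_*(\vec v))$.

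The main obstacle is claim~(b) in Part~(2): showing that no new connected components appear in the pre-image disk decomposition upon base change to $K$. This rests on the fact that each component of $\varphi^{-1}(\DD_k(\varphi(a),s')^-)$ is an open Berkovich disk whose center may be taken in $k$ (since $k$-classical pre-images of $k$-classical points in the target disk are dense in each component, by algebraic closedness of $k$), together with the continuity of roots: any classical $K$-pre-image of a classical $K$-point in the target disk deforms continuously from nearby $k$-pre-images of $k$-points, and hence lies inside one of the existing $K$-extensions of $k$-components rather than giving rise to a new component.
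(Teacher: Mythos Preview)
Your argument is essentially correct and follows the same overall strategy as the paper (density and continuity for Part~(1), Proposition~\ref{Prop: Top Characterization} for Part~(2), Propositions~\ref{Prop: Directional Definition} and~\ref{Prop: Rivera-Letelier Mapping} for Part~(3)). The one genuine difference is in Part~(2): you begin with a $\varphi$-saturated neighborhood $U$ over $k$ and attempt to show that its $K$-extension $U_K$ is $\varphi_K$-saturated (your claim~(b)), whereas the paper goes in the opposite direction, choosing a $\varphi_K$-saturated neighborhood $V$ over $K$ and observing that its pullback $U = \iota^{-1}(V)$ is automatically $\varphi$-saturated. The paper's direction is cleaner: once Part~(1) is established, $\iota^{-1}(\varphi_K^{-1}(W)) = \varphi^{-1}(\iota^{-1}(W))$ holds immediately, so the pullback of a connected component of $\varphi_K^{-1}(W)$ is a connected component of $\varphi^{-1}(\iota^{-1}(W))$ with no further work. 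Your push-forward direction is valid, but it forces you to argue (as you acknowledge) that no new components appear over $K$; your ``continuity of roots'' sketch can be made rigorous, but it is genuinely more effort than the paper's one-line pullback. For Part~(3), the paper likewise emphasizes the identity $\iota^{-1}(\BB_{\iota(x)}(\iota_*(\vec v))^-) = \BB_x(\vec v)^-$ (extracted from the case analysis in the proof of Theorem~\ref{Thm: Base Change}) to match up pre-image counts, which is essentially what you do via the agreement of classical $k$-points in the two disks.
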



\begin{proof}[Proof of Theorem~\ref{Thm: Base Change}]
	Define $\iota_k^K(\zeta_{k, \aa, \rr}) = \zeta_{K, \aa, \rr}$ and $\iota_k^K(\infty) = \infty$. Since cofinality of nested sequences of disks is preserved under base extension, this map is evidently well-defined and injective. Compatibility of the family of maps $\iota_k^{\bullet}$ is clear from the definition. 
	
	For the remainder of the proof, we assume that the extension $K / k$ is fixed and write $\iota = \iota_k^K$ for simplicity. To prove weak continuity of $\iota$, we observe that it suffices to prove $\iota^{-1}(\DD(a,r)^-)$ is open and $\iota^{-1}(\DD(a,r))$ is closed for every $a \in K$ and $r \in \RR_{\geq 0}$. The proof breaks naturally into several cases. 
	
	\textbf{Case 1: $D_K(a,r)^- \cap k \neq \emptyset$.} Without loss of generality, we may assume that $a \in k$. We claim that $\iota^{-1}\left( \DD_K(a, r)^- \right) = \DD_k(a,r)^-$. Suppose first that $\zeta_{k, \bb, \ss} \in \iota^{-1}\left( \DD_K(a, r)^- \right)$. We see that
	\benn
		\ba
			\left|(T-a)(\zeta_{k, \bb, \ss})\right| &= \lim_{i \to \infty} \sup_{x \in D_k(b_i, s_i)} |x-a| \\
				&\leq \lim_{i \to \infty} \sup_{x \in D_K(b_i, s_i)} |x-a| 
				= \left|(T-a)(\zeta_{K, \bb, \ss})\right| < r.
		\ea
	\eenn
Hence $\zeta_{k, \bb, \ss} \in \DD_k(a, r)^-$. 

	For the other containment, suppose that $\zeta_{k, \bb, \ss} \in \DD_k(a,r)^-$. Then $s_i < r$ and $|(T-a)(\zeta_{k, b_i, s_i})| <r$ for $i$ sufficiently large; fix such an $i$ for the moment. For arbitrary $x \in D_K(b_i, s_i)$ and $x' \in D_k(b_i, s_i)$, we see that
	\benn
			|x - a| = |(x - b_i) - (x' - b_i) + (x' - a)| 
				\leq \max \left\{s_i, |(T-a)(\zeta_{k, b_i, s_i})| \right\} < r.
	\eenn
Taking the supremum over all $x \in D_K(b_i, s_i)$ shows $\zeta_{K, b_i, s_i} \in \DD_K(a, r)^-$. Letting $i$ tend to infinity, we see that $\zeta_{K, \bb, \ss} \in \DD_K(a, r)^-$. (Note that $|(T-a)(\zeta_{K, b_i, s_i})|$ is by definition a nonincreasing sequence in the variable~$i$.)

	\textbf{Case 2: $D_K(a,r) \cap k \neq \emptyset$.} The argument here is virtually identical to the previous case. 
If we assume (as we may without loss) that $a \in k$, then  $\iota^{-1}\left( \DD_K(a, r) \right) = \DD_k(a,r)$.

	\textbf{Case 3: $D_K(a,r)^- \cap k = \emptyset$.} We will argue that $\iota^{-1}\left( \DD_K(a,r)^-\right) = \emptyset$. Suppose to the contrary that there exists $\zeta_{k, \bb, \ss}$ such that $|(T-a)(\zeta_{K, \bb, \ss})| < r$. Then for $i$ sufficiently large, we find that
	\[
		|(T - a)(\zeta_{K, b_i, s_i})| = \sup_{x \in D_K(b_i, s_i)} |x-a| < r.
	\]
But then $b_i \in k \cap D_K(a,r)^-$, a contradiction.

	\textbf{Case 4: $D_K(a,r) \cap k = \emptyset$.} Observe that
	\[
		\DD_K(a,r) = \{\zeta_{K, a, r}\} \cup \bigcup_{a' \in D_K(a,r)} \DD_K(a',r)^-.
	\]
We have already shown that the pre-image of each of the latter sets is empty in Case~3, so that $\iota^{-1}\left(\DD_K(a,r)\right) = \iota^{-1}(\zeta_{K,a,r})$. As $\iota$ is injective, we conclude that $\iota^{-1}\left(\DD_K(a,r)\right)$ is either empty or a single point. In either case, it is closed for the weak topology.

	Next,  $\iota$ is an isometry for the path-distance metric because it preserves affine diameters and because it is compatible with the partial orderings on $\Berk_k$ and $\Berk_K$ in the following sense: For every $x, x', y \in \Berk_k$, we have $x \preceq x' \Rightarrow \iota(x) \preceq \iota(x')$ and $\iota(x \vee y) =  \iota(x) \vee \iota(y)$. Indeed, these observations are immediate from the definitions for points of types~I,~II, or~III, and a limiting argument gives them for type~IV points.
	
	By continuity and injectivity, the image of the connected set $\BB_x(\vec{v})^-$ under $\iota$ is connected and does not contain $\iota(x)$. So it must be contained in $\BB_{\iota(x)}(\vec{w})^-$ for some $\vec{w} \in T_{\iota(x)}$. We define $\iota_*(\vec{v}) = \vec{w}$. 
	
	We must show that $\iota_*: T_x \to T_{\iota(x)}$ is injective. This is clear if $x$ is of type~I or type~IV, since $\# T_x = 1$. So we now assume that $x$ is of type~II or~III. Let $\vec{v}_1 \neq \vec{v}_2 \in T_x$. Choose $x_i \in \BB_x(\vec{v}_i)^-$ for $i = 1,2$. It suffices to show $\iota(x_1)$ and $\iota(x_2)$ lie in distinct connected components of $\Berk_K \smallsetminus \{\iota(x)\}$. If $\BB_x(\vec{v}_2)^-$ contains $\infty$, then $x_1 \prec x \prec x_2$. The ordering is compatible with $\iota$, so that $\iota(x_1) \prec \iota(x) \prec \iota(x_2)$. This last inequality impies that $\iota(x_1)$ and $\iota(x_2)$ must lie in distinct connected components of $\Berk_K \smallsetminus \{\iota(x)\}$. By symmetry, we obtain the same conclusion if $\infty \in \BB_x(\vec{v}_1)^-$. Finally, suppose that $\infty \not\in \BB_x(\vec{v}_i)^-$ for $i = 1,2$. In that case, $x_i \prec x$ for $i=1,2$, and $x_1$ and $x_2$ are mutually incomparable under the partial ordering, and we have $x_1 \vee x_2 = x$. Then $\iota(x) = \iota(x_1) \vee \iota(x_2)$, which means $\iota(x_1)$ and $\iota(x_2)$ again lie in distinct connected components of $\Berk_K \smallsetminus \{\iota(x)\}$.
\end{proof}

\begin{proof}[Proof of Corollary~\ref{Cor: Compatible Multiplicities}]
	The first assertion is trivial for type~I points of $\Berk_k$, and the full equality $\varphi_K \circ \iota = \iota \circ \varphi$ follows by weak continuity 
and the fact that type~I points are dense in $\Berk_k$. 
	
	For the second assertion, it evidently holds whenever $x$ is a type~I point by the algebraic description of the multiplicity in that case. Now let $x \in \Berk_k$ be arbitrary, and let $V$ be a $\varphi_K$-saturated weak neighborhood of $\iota(x)$. Then the multiplicity $m = m_{\varphi_K}(\iota(x))$ is equal to $\#V \cap \varphi_K^{-1}(\{y\})$ for each $y \in \varphi_K(V) \cap \PP^1(K)$  (Proposition~\ref{Prop: Top Characterization}).  Now observe that $U = \iota^{-1}(V)$ is a $\varphi$-saturated weak neighborhood of $x$. Since $\varphi$ is defined over the algebraically closed field $k$, we find that $\#U \cap \varphi^{-1}(\{y\}) = m$ for any $y \in \varphi(U)\cap \PP^1(k)$. Thus $m_{\varphi}(x) = m$ as well. 
	
	Finally, let $x \in \Berk_k$, $\vec{v} \in T_x$. Write $\BB_k = \BB_x(\vec{v})^-$ and $\BB_K = \BB_{\iota(x)}(\iota_*(\vec{v}))^-$. Then the proof of the theorem shows $\iota^{-1}(\BB_K) = \BB_k$. The third assertion now follows from Propositions~\ref{Prop: Directional Definition} and~\ref{Prop: Rivera-Letelier Mapping}, the compatibility of $\iota$ and $\varphi$, and what we have already shown in the last paragraph. 	
\end{proof}


\section{The Locus of Inseparable Reduction}
\label{Sec: Inseparable Reduction}

	The phenomenon of inseparable reduction at a type~II point was first investigated by Rivera-Letelier; we spend the present section extending this notion to points of $\Berk$ of arbitrary type. 
In \S\ref{Sec: Ends and Interior} we will characterize the strong interior of the ramification locus in terms of inseparable reduction.

	Let us begin by recalling Rivera-Letelier's definition. Let $\varphi \in k(z)$ be a nonconstant rational function and let $x \in \Berk$ be a type~II point. Let $\sigma_1, \sigma_2 \in \PGL_2(k)$ be chosen so that $\sigma_1(\zeta_{0,1}) = x$ and $\sigma_2(\varphi(x)) = \zeta_{0,1}$. Then $\psi = \sigma_2 \circ \varphi \circ \sigma_1$ fixes the Gauss point, and so $\psi$ has nonconstant reduction $\widetilde{\psi}$. The reduction $\tilde{\psi}$ is well-defined up to pre- and post-composition with an element of $\PGL_2(\tilde k)$. We say that $\varphi$ has \textbf{inseparable reduction} at $x$ if $k$ has positive residue characteristic and $\widetilde{\psi} \in \tilde k(z)$ is inseparable. 
We say that $\varphi$ has \textbf{separable reduction} at $x$ if it does not have inseparable reduction. This definition is stable under extension of scalars:

\begin{prop}
\label{Prop; Stable}
	Let $K / k$ be an extension of complete and algebraically closed non-Archimedean fields, and let $\iota_k^K: \Berk_k \hookrightarrow \Berk_K$ be the canonical inclusion. Then $\iota$ maps type~II points to type~II points, and the function $\varphi$ has inseparable reduction at a type~II point $x \in \Berk_k$ if and only if $\varphi_K$ has inseparable reduction at $\iota_k^K(x)$. 
\end{prop}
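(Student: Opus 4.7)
The plan is to handle the two parts of the statement in turn: first the preservation of type, and then inseparability. For the first, since $\iota_k^K(\zeta_{k,a,r}) = \zeta_{K,a,r}$ by construction, a type~II point $\zeta_{k,a,r}$ with $a\in k$, $r\in |k^\times|$ maps to $\zeta_{K,a,r}$, and since $k\subset K$ and $|k^\times|\subset|K^\times|$, the image is again of type~II in $\Berk_K$. This is immediate.

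For the harder direction, I would begin by choosing the normalization over $k$. Fix $\sigma_1,\sigma_2\in\PGL_2(k)$ with $\sigma_1(\zeta_{k,0,1})=x$ and $\sigma_2(\varphi(x))=\zeta_{k,0,1}$; such elements exist because $\PGL_2(k)$ acts transitively on type~II points (this uses that $k$ is algebraically closed), and $\varphi(x)$ is itself type~II since nonconstant rational functions preserve the point type. Set $\psi=\sigma_2\circ\varphi\circ\sigma_1\in k(z)$, which fixes the Gauss point and has nonconstant reduction $\widetilde{\psi}\in\tilde{k}(z)$. Now view the same $\sigma_i$ as elements of $\PGL_2(K)$. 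Applying Corollary~\ref{Cor: Compatible Multiplicities} to $\sigma_1,\sigma_2$ (as rational functions defined over $k$), we get that they commute with $\iota$, hence
\[
\sigma_1(\zeta_{K,0,1}) = \iota(\sigma_1(\zeta_{k,0,1})) = \iota(x), \qquad \sigma_2(\varphi_K(\iota(x))) = \iota(\sigma_2(\varphi(x))) = \zeta_{K,0,1}.
\]
Thus $\psi_K=\sigma_2\circ\varphi_K\circ\sigma_1$ is the analogous normalization for $\varphi_K$ at $\iota(x)$.

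Next I would compare the two reductions. Write $\psi=f/g$ in normalized form, with $f,g\in k^{\circ}[z]$ and some coefficient of absolute value~$1$. Since the absolute value on $K$ extends that on $k$, the same expression $f/g$ remains normalized when $f,g$ are viewed in $K^{\circ}[z]$, so $\widetilde{\psi_K}\in\tilde{K}(z)$ is precisely the image of $\widetilde{\psi}\in\tilde{k}(z)$ under the natural inclusion $\tilde{k}\hookrightarrow\tilde{K}$. Finally, since $k$ and $K$ have the same residue characteristic, the definition of inseparable reduction makes sense for both simultaneously, and inseparability of a rational function is equivalent to vanishing of its derivative, which is an identity among coefficients preserved and reflected by any field extension of the coefficient ring. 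Hence $\widetilde{\psi}$ is inseparable in $\tilde{k}(z)$ if and only if $\widetilde{\psi_K}$ is inseparable in $\tilde{K}(z)$, giving the equivalence.

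The main (modest) obstacle is ensuring that the normalization for $\varphi_K$ at $\iota(x)$ can be chosen to come from $k$; the ambiguity of the reduction up to pre- and post-composition by $\PGL_2(\tilde{k})$ or $\PGL_2(\tilde{K})$ is irrelevant because inseparability is invariant under such compositions, but it is the compatibility $\sigma_i\circ\iota=\iota\circ\sigma_i$ supplied by Corollary~\ref{Cor: Compatible Multiplicities} that makes the chosen $k$-normalization transport cleanly to $K$.
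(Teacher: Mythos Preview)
Your proposal is correct and follows essentially the same approach as the paper: reduce to the Gauss point via $\PGL_2(k)$-coordinate changes, invoke Corollary~\ref{Cor: Compatible Multiplicities} to transport these changes through $\iota$, and then observe that $\widetilde{\psi_K}$ is just the base change $\widetilde{\psi}_{\tilde K}$ so that inseparability is preserved and reflected. Your write-up is more explicit than the paper's (which compresses the argument to a few lines), but the structure and the key ingredients are identical.
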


\begin{proof}
	If $x = \zeta_{k, a, r}$ is a type~II point, then $r \in |k^\times| \subset |K^\times|$, and hence $\iota_k^K(x) = \zeta_{K,a,r}$ is also a type~II point. We may suppose that $x = \zeta_{k,0,1} = \varphi(\zeta_{k,0,1})$ after a change of coordinate on the source and target. Note that $\iota_k^K$ is compatible with these changes of coordinate (Corollary~\ref{Cor: Compatible Multiplicities}). Since $\widetilde{\varphi_K} = \widetilde{\varphi}_{\tilde K}$, we see that $\varphi$ has inseparable reduction at the Gauss point of $\Berk_k$ if and only if $\varphi_K$ has inseparable reduction at the Gauss point of $\Berk_K$.
\end{proof}

	In order to generalize the definition of inseparable reduction, we will need to know there exist certain kinds of extensions of the field $k$. The following result is well-known, although its proof seems not to be.  

\begin{prop}
\label{Prop: Huge Extension}
	There exists an algebraically closed and complete extension $K / k$ with trivial residue extension such that $K$ is spherically closed and $|K^\times| = \RR_{>0}$. In particular, $\Berk_K$ has no point of type~III or type~IV.
\end{prop}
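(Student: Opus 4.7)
The plan is to construct $K$ in two stages: first enlarge the value group of $k$ to all of $\RR_{>0}$ while keeping the residue field fixed, then take a spherical completion to kill any type~IV behaviour.

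\textbf{Stage 1.} Since $k$ is algebraically closed, $|k^\times|$ is a divisible subgroup of $\RR_{>0}$, so the quotient $\RR_{>0}/|k^\times|$ carries a canonical $\QQ$-vector space structure (via logarithms). Fix a $\QQ$-basis $\{r_\alpha\}_{\alpha < \kappa}$ of this quotient, and construct a transfinite tower $k = F_0 \subset F_1 \subset \cdots$ by the recipe: at a successor stage, pass from $F_\alpha$ to the completion of the algebraic closure of $F_\alpha(t_\alpha)$, where $t_\alpha$ is transcendental with Gauss norm $|t_\alpha| = r_\alpha$; at a limit stage, take the completion of the union. Because $r_\alpha$ lies outside the (divisible) value group of $F_\alpha$, the classical theory of Gauss extensions shows that the residue field is preserved and that the value group grows only by $r_\alpha^\QQ$ (after algebraic closure). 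Both invariants are preserved by completion, and the non-Archimedean completion of an algebraically closed field remains algebraically closed. The resulting $k_1$ is complete, algebraically closed, has $\tilde{k_1} = \tilde k$, and satisfies $|k_1^\times| = \RR_{>0}$.

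\textbf{Stage 2.} Let $K$ be a spherical completion of $k_1$, that is, a maximal immediate extension (existence by Zorn's lemma). Then $K$ is spherically closed, and as an immediate extension it inherits $|K^\times| = \RR_{>0}$ and $\tilde K = \tilde k$ from $k_1$; spherical closedness implies completeness. Algebraic closedness of $K$ follows from Kaplansky's theorem: a spherically closed valued field with divisible value group and algebraically closed residue field is algebraically closed, provided in positive residue characteristic the residue field is perfect --- automatic here since $\tilde k$ is algebraically closed. The ``in particular'' assertion is then immediate from the classification of points of $\Berk_K$: a type~III point requires a radius in $\RR_{>0} \smallsetminus |K^\times|$, now empty, and a type~IV point requires a nested descending sequence of closed disks with empty intersection, excluded by spherical closedness.

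The main technical obstacle is the positive residue characteristic case of Kaplansky's theorem, where arguments about wild ramification are delicate. As a fallback, Stage 2 can be replaced by a direct Zorn argument: take $K$ maximal among algebraically closed, complete extensions of $k_1$ with value group $\RR_{>0}$ and residue field $\tilde k$. If $K$ failed to be spherically closed, then a nested descending sequence $(D(a_i, r_i))$ in $K$ with empty intersection would define an immediate transcendental extension $K(\alpha)$ via the multiplicative seminorm $|f(\alpha)| = \lim_i \sup_{y \in D(a_i, r_i)} |f(y)|$; the completion of its algebraic closure would be a strictly larger extension with the same invariants, contradicting maximality of $K$.
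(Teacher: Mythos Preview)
Your argument is correct, but it proceeds quite differently from the paper's. The paper first invokes a black-box construction from Robert's \emph{A Course in $p$-adic Analysis} to produce a single ambient field $\hat K / k$ that is already algebraically closed, complete, spherically closed, and has value group $\RR_{>0}$, but with no control on the residue field. It then applies Zorn's lemma to the poset of \emph{intermediate} fields $k \subset L \subset \hat K$ with $\tilde L = \tilde k$, and checks that a maximal such $L$ inherits all four properties from $\hat K$ by showing that completion, Tate-algebra extension, finite algebraic extension, and spherical closure would each enlarge $L$ inside $\hat K$ without enlarging the residue field.

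Your two-stage approach (transfinite tower to fill out the value group, then maximal immediate extension) is more self-contained but longer; the paper trades that work for the citation to Robert. Two remarks on your write-up: first, your concern about Kaplansky's theorem in positive residue characteristic is unnecessary --- once $K$ is spherically closed with $|K^\times| = \RR_{>0}$ and $\tilde K$ algebraically closed, any finite extension of $K$ is automatically immediate (the value group cannot grow inside $\RR_{>0}$, and the residue extension is algebraic over an algebraically closed field), hence trivial. No wild-ramification subtlety arises. Second, your fallback Zorn argument is in fact very close in spirit to the paper's proof, the only difference being that the paper works inside a fixed ambient $\hat K$ rather than over all abstract extensions with prescribed invariants; this lets the paper avoid having to argue separately that the newly constructed extensions (Tate algebra, spherical closure) embed into something.
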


\begin{proof}
	The construction of a universal field $\Omega_p$ lying over the algebraic closure of $\QQ_p$ given in \cite[pp.137--140]{Robert_p-adic_Book_2000} applies \textit{mutatis mutandis} to our setting. It gives an extension $\hat K / k$ that is algebraically closed and complete, spherically closed, and has the desired value group. However, there is no control over the residue field of $\hat K$ in this construction.
	
	Let $S$ be the set of intermediate extensions of $\hat K / k$ with trivial residue extension. Then $S$ is nonempty, and the union of a linearly ordered collection of elements of $S$ is again an element of~$S$. Zorn's lemma guarantees the existence of a maximal element $K$, which we claim satisfies the conclusion of the proposition.  
		
	Evidently $K$ is complete, since otherwise its completion would be a strictly larger element of~$S$. Next we show that $|K^\times| = \RR_{> 0}$. For otherwise, there exists $r \in \RR_{>0} \smallsetminus |K^\times|$. Let $\AA_r$ be the generalized Tate algebra $K\{r^{-1}T\}$; it is the $K$-algebra of series $f = \sum_{i \geq 0} a_i T^i$ with $K$-coefficients such that $|a_i|r^i \to 0$ as $i \to \infty$. The norm on $\AA_r$ is $\|f\|_r = \sup_{i \geq 0} |a_i| r^i$. Then  $\AA_r$ is a domain, and its fraction field $K_r$ has residue field $\tilde k = \tilde K$ and value group generated by $r$ and $|K^\times|$. Thus $K_r$ contradicts the maximality of $K$. 
	
	Now let $K' / K$ be a finite extension. Since the corresponding extension of residue fields is finite, and since $\tilde K = \tilde k$ is algebraically closed, we see that $\tilde K' = \tilde K$. Hence $K' = K$ by maximality.  
	
	Finally, the spherical closure of $K$ is the maximal extension with the same residue field and value group as $K$. By maximality, we find $K$ is itself spherically closed.  		
\end{proof}

\begin{define}
	Fix a nonconstant rational function $\varphi \in k(z)$. We say that $\varphi$ has inseparable reduction at a type~I point if and only if $\varphi$ is an inseparable rational function. We have already defined above what it means for $\varphi$ to have inseparable reduction at a point of type~II. If $x \in \Berk_k$ is a point of type~III or type~IV, then the preceding proposition shows there exists an extension $K / k$ of algebraically closed and complete non-Archimedean fields such that $\iota_k^K(x)$ is a point of type~II.  We say that $\varphi$ has inseparable reduction at $x$ if $\varphi_K$ has inseparable reduction at $\iota_k^K(x)$. This definition is independent of the choice of field $K$ (Proposition~\ref{Prop; Stable}). 
\end{define}
	
	The notion of inseparable reduction at a type~I or type~II point is evidently intrinsic to the field $k$ by the above definitions. This is also true of type~III points:
	
\begin{prop}
\label{Prop: Type III}
	Suppose $k$ has residue characteristic $p > 0$. Let $\varphi \in k(z)$ be a nonconstant rational function, and let $x \in \Berk$ be a type~III point. Then $\varphi$ has inseparable reduction at $x$ if and only if $p \mid m_\varphi(x)$. 
\end{prop}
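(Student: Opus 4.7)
My plan is to reduce to the type~II case via extension of scalars, then exploit the rigidity forced by a type~III point having only two tangent directions.

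First I would invoke Proposition~\ref{Prop: Huge Extension} to find an algebraically closed, complete extension $K/k$ with trivial residue extension and $|K^\times| = \RR_{>0}$. Writing $x = \zeta_{k,a,r}$, the image $\iota(x) = \zeta_{K,a,r}$ is now of type~II (since $r \in |K^\times|$), and by definition $\varphi$ has inseparable reduction at $x$ iff $\varphi_K$ has inseparable reduction at $\iota(x)$. Choosing $\alpha \in K$ with $|\alpha| = r$, the map $\sigma_1(z) = \alpha z + a \in \PGL_2(K)$ sends $\zeta_{K,0,1}$ to $\iota(x)$, and an analogous $\sigma_2 \in \PGL_2(K)$ moves $\varphi_K(\iota(x)) = \iota(\varphi(x))$ to the Gauss point (note that $\varphi$ preserves type~III since $|k^\times|$ is divisible). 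Setting $\psi = \sigma_2 \circ \varphi_K \circ \sigma_1$, the reduction $\widetilde{\psi} \in \tilde K(z)$ has degree $m_{\varphi_K}(\iota(x)) = m_\varphi(x) =: m$ by the Algebraic Reduction Formula together with Corollary~\ref{Cor: Compatible Multiplicities}(2).

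The crucial step is to identify $\iota_*(T_x) \subset T_{\zeta_{K,0,1}} \cong \PP^1(\tilde K)$ after the coordinate change. Since $x$ has type~III, $T_x = \{\vec u_0, \vec u_\infty\}$, where $\vec u_0$ is the component of $\Berk_k \smallsetminus \{x\}$ containing $a$ and $\vec u_\infty$ the one containing $\infty$. Because $\sigma_1$ sends $a \mapsto 0$ and $\infty \mapsto \infty$, tracking classical points through $\iota$ (using Theorem~\ref{Thm: Base Change}(5)) yields $\iota_*(\vec u_0) = \vec 0$ and $\iota_*(\vec u_\infty) = \vec \infty$; the analogous identification holds at $\varphi(x)$ via $\sigma_2$. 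Then Corollary~\ref{Cor: Compatible Multiplicities}(3), Proposition~\ref{Prop: Expansion Properties}(2) (type~III), and the Algebraic Reduction Formula combine to give $m_{\widetilde{\psi}}(0) = m_\varphi(x, \vec u_0) = m$ and similarly $m_{\widetilde{\psi}}(\infty) = m$; the compatibility $\iota_* \circ \varphi_* = (\varphi_K)_* \circ \iota_*$ forces $\widetilde{\psi}(\{0,\infty\}) \subseteq \{0,\infty\}$.

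To finish, observe that $\widetilde{\psi}$ has degree $m$ while $0$ and $\infty$ each map into $\{0,\infty\}$ with multiplicity $m$. Counting preimages with multiplicity shows $\widetilde{\psi}^{-1}(\widetilde\psi(0)) = \{0\}$ and $\widetilde{\psi}^{-1}(\widetilde\psi(\infty)) = \{\infty\}$, so $\{\widetilde{\psi}(0), \widetilde{\psi}(\infty)\} = \{0,\infty\}$, and thus $\widetilde{\psi}(z) = cz^{\pm m}$ for some $c \in \tilde K^\times$. Such a monomial is inseparable over the characteristic-$p$ field $\tilde K$ if and only if $p \mid m$, establishing the equivalence. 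The main obstacle I anticipate is the middle step: carefully verifying that $\iota_*$ carries the two tangent directions of $x$ to precisely $\vec 0$ and $\vec \infty$ after the $K$-coordinate change. This hinges on the existence of the scaling factor $\alpha \in K$ with $|\alpha| = r \notin |k^\times|$, which is only available because we enlarged the field, and it is what converts the two-element set $T_x$ into a specific pair inside $\PP^1(\tilde K)$.
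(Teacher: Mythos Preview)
Your proof is correct and follows essentially the same route as the paper: extend scalars so that $x$ becomes a type~II point, change coordinates so that the two tangent directions coming from $T_x$ sit at $\vec 0$ and $\vec\infty$, use Corollary~\ref{Cor: Compatible Multiplicities} and the Algebraic Reduction Formula to see that the reduction $\widetilde\psi$ has degree $m$ with $0$ and $\infty$ each of multiplicity $m$, and conclude that $\widetilde\psi$ is a monomial of degree $m$. The only cosmetic difference is that the paper chooses $\sigma_2$ so that $(\sigma_2)_*\big((\varphi_K)_*(\vec w_i)\big)$ lands exactly on $\vec 0$ and $\vec\infty$, which yields $\widetilde\psi(z)=az^m$ outright, whereas you allow the weaker conclusion $\widetilde\psi(z)=cz^{\pm m}$; either form suffices.
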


\begin{proof}
	Write $m = m_\varphi(x) = m_\varphi(x, \vec{v}_1) = m_\varphi(x, \vec{v}_2)$, where $T_x = \{\vec{v} _1, \vec{v}_2\}$. Let $K$ be an algebraically closed and complete extension of $k$ such that $x_K = \iota_k^K(x)$ is a type~II point of $\Berk_K$. Write $\vec{w}_i = (\iota_k^K)_*(\vec{v}_i)$ for $i = 1,2$. Choose $\sigma_1 \in \PGL_2(K)$ so that \[
	\sigma_1(\zeta_{K,0,1}) = x, \quad (\sigma_1)_*(\vec{0})
		 = \vec{w}_1, \quad (\sigma_1)_*(\vec{\infty}) = \vec{w}_2. 
\] 
Next choose $\sigma_2 \in \PGL_2(K)$ so that 
\[
	\sigma_2(\varphi_K(x)) = \zeta_{K,0,1}, \quad (\sigma_2)_*\left((\varphi_K)_*(\vec{w}_1)\right) = \vec{0}, \quad
	(\sigma_2)_*\left((\varphi_K)_*(\vec{w}_2)\right) = \vec{\infty}. 
\]
Then the map $\psi = \sigma_2 \circ \varphi_K \circ \sigma_1$ satisfies $\psi(\zeta_{K, 0, 1}) = \zeta_{K, 0,1}$, $\psi_*(\vec{0}) = \vec{0}$, and $\psi_*(\vec{\infty}) = \vec{\infty}$. Then $m = m_\psi(\zeta_{K,0,1}) = m_\psi(\zeta_{K,0,1}, \vec{0}) = m_\psi(\zeta_{K,0,1}, \vec{\infty})$  (Corollary~\ref{Cor: Compatible Multiplicities}). The Algebraic Reduction Formula implies that $\widetilde{\psi}(z) = a z^m$ for some nonzero $a \in \tilde K$, and the proof is complete since $p \mid m$ if and only if $\psi$ has inseparable reduction at $\zeta_{K,0,1}$ if and only if $\varphi$ has inseparable reduction at $x$.
\end{proof}


\section{Connected Components}
\label{Sec: Connected Components}

	We open this section by giving a bound on the number of connected components that the ramification locus may have (Theorem~A). Then we study the part of the ramification locus lying off of the connected hull of the critical points. We also give sufficient conditions for when $\Ram_\varphi \subset \Hull(\Crit(\varphi))$. Finally, we show that --- subject to the bound given by Theorem~A --- any number of connected components is achievable. 
	
\begin{prop}
\label{Prop: 2 critical points}
	Let $\varphi \in k(z)$ be a nonconstant rational function. Let $x \in \Berk$ be a point with $m_\varphi(x) > 1$, and let $X$ be the connected component of $\Ram_\varphi$ containing $x$. Then $X$ contains at least $2m_\varphi(x) - 2 \geq 2$ critical points of $\varphi$ counted with weights. 
\end{prop}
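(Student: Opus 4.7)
The plan is to reduce to $x$ being of type~II (via extension of scalars, Section~\ref{Sec: Base Change}) and then count critical weight direction-by-direction via Hurwitz's formula for the reduction $\tilde\psi$ and a refinement of Proposition~\ref{Prop: Critical surplus}.

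For the reduction to type~II: if $x$ has type~III or~IV, Proposition~\ref{Prop: Huge Extension} produces an algebraically closed complete extension $K \supseteq k$ that is spherically closed with $|K^\times| = \RR_{>0}$, so $\iota = \iota_k^K$ sends $x$ to a type~II point of $\Berk_K$. Corollary~\ref{Cor: Compatible Multiplicities} preserves multiplicities and surplus multiplicities and gives $\iota^{-1}(\Ram_{\varphi_K}) = \Ram_\varphi$; moreover the Wronskian has $k$-coefficients, so $\Crit(\varphi_K) = \iota(\Crit(\varphi))$. Any critical point in the connected component $X_K$ of $\iota(x)$ in $\Ram_{\varphi_K}$ is joined to $\iota(x)$ by a path lying in $X_K$; by the order-and-meet compatibility of $\iota$ (Theorem~\ref{Thm: Base Change}) this path is the $\iota$-image of the unique path in $\Berk_k$ between the corresponding preimages, which therefore lies in $\Ram_\varphi$, placing the preimage of the critical point in $X$. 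Hence the weighted critical counts in $X$ and $X_K$ agree. The type~I case then reduces to the type~II case by applying the result to an adjacent type~II point $y \in X$ furnished by Corollary~\ref{Cor: Closed} (on which $m_\varphi(y) = m_\varphi(x)$ by Proposition~\ref{Prop: Expansion Properties}).

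Assuming $x$ is type~II: if $\varphi$ is inseparable, Proposition~\ref{Prop: Frobenius} gives $\Ram_\varphi = \Berk$ and the bound is trivial. Otherwise conjugate (Corollary~\ref{Cor: Coord Change}) so that $x = \varphi(x) = \zeta_{0,1}$, and first assume the reduction $\tilde\psi \in \tilde k(z)$ is separable of degree $m$. Hurwitz for $\tilde\psi$ (Proposition~\ref{Prop: Hurwitz}) gives $\sum_{\vec v \in T_x} w_{\tilde\psi}(\vec v) = 2m - 2$. A refinement of the proof of Proposition~\ref{Prop: Critical surplus}, retaining the identity $\tilde{\wronsk}_\varphi = \wronsk_{\tilde\psi}\cdot h^2$ and Lemma~\ref{Lem: Surplus Multiplicity} without the tameness hypothesis, yields
\[
\sum_{c \in \Crit(\varphi) \cap \BB_x(\vec v)^-} w_\varphi(c) \;=\; 2s_\varphi(x,\vec v) + w_{\tilde\psi}(\vec v)
\]
for each $\vec v \in T_x$. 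Summing over $\vec v$ delivers at least $2m - 2$ units of critical weight in $\Berk \smallsetminus \{x\}$.

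To conclude, one must show these critical points lie in $X$. For $\vec v \in T_x$ with $m_\varphi(x,\vec v) > 1$ and $c \in \Crit(\varphi) \cap \BB_x(\vec v)^-$, I would argue $[x,c] \subset \Ram_\varphi$. When $s_\varphi(x,\vec v) = 0$, Corollary~\ref{Cor: Nonincreasing} makes $m_\varphi$ nonincreasing on $[x,c]$, and since both endpoints carry multiplicity at least $2$ so does every intermediate point. When $s_\varphi(x,\vec v) > 0$, Proposition~\ref{Prop: Jumping surplus} exhibits the surplus as a sum over finitely many jumping points $y \in \BB_x(\vec v)^-$, each joined to $x$ through $\Ram_\varphi$ by a path of multiplicity at least $m_\varphi(x,\vec v)$, and each critical point of interest lies in a sub-disk $\BB_y(\vec u)^-$ of strictly smaller surplus; induction closes the argument. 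The inseparable reduction case is handled analogously by factoring $\tilde\psi = \tilde\psi_0 \circ F^s$ with $\tilde\psi_0$ separable and applying the separable analysis to $\tilde\psi_0$, using multiplicativity of the multiplicity (Proposition~\ref{Prop: Multiplicity Properties}) to propagate weights through Frobenius. The main obstacle throughout is this last connectivity step: outside the regime of Corollary~\ref{Cor: Nonincreasing} one must carefully track the multiplicity along paths to prevent the counted critical points from escaping into neighboring components of $\Ram_\varphi$.
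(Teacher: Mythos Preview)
Your approach is genuinely different from the paper's, and the connectivity step you flag as the ``main obstacle'' is in fact a real gap that the paper sidesteps entirely.

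The paper's proof does not count critical weight \emph{inside} $X$ at all. Instead it bounds the critical weight \emph{outside} $X$. Let $\{U_\alpha\}$ be the connected components of $\Berk \smallsetminus X$; each $U_\alpha$ is an open Berkovich disk with type~II boundary point $x_\alpha$, and because $X$ is an entire connected component of $\Ram_\varphi$, the directional multiplicity $m_\varphi(x_\alpha, \vec v_\alpha) = 1$. Since $p \nmid 1$ for any $p$, Proposition~\ref{Prop: Critical surplus} applies with no tameness hypothesis and gives $\sum_{c \in \Crit(\varphi) \cap U_\alpha} w_\varphi(c) = 2 s_\varphi(U_\alpha)$. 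Corollary~\ref{Rem: Lower Bound} bounds $s_\varphi(U_\alpha)$ by the number of preimages of $y = \varphi(x)$ in $U_\alpha$; summing over $\alpha$ and using that $\varphi$ has $\deg(\varphi)$ preimages of $y$ in total (at least $m_\varphi(x)$ of them at $x \in X$) gives $\sum_{c \notin X} w_\varphi(c) \leq 2(\deg(\varphi) - m_\varphi(x))$. Hurwitz then yields the bound directly.

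This avoids every difficulty you encounter. No reduction to type~II is needed, since the boundary points $x_\alpha$ are automatically type~II. No refinement of Proposition~\ref{Prop: Critical surplus} is needed, since it is only ever applied with $m = 1$. No connectivity argument is needed, since one never has to decide whether a given critical point lies in $X$. And no separate treatment of inseparable reduction at $x$ is needed.

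Your connectivity gap is genuine, not merely technical: it is simply false that every critical point in a ramified direction $\BB_x(\vec v)^-$ lies in $X$. A sub-disk $\BB_y(\vec w)^-$ with $y \in \BB_x(\vec v)^-$ and $m_\varphi(y, \vec w) = 1$ can peel off from $X$ while still containing critical points (contributing $2s_\varphi(y,\vec w)$ to your directional count). Your induction on surplus via Proposition~\ref{Prop: Jumping surplus} does not control these unramified branches. The critical points that escape in this way are exactly those lying in the complement disks $U_\alpha$ --- which is why the paper's complementary count succeeds where the direct count struggles.
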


\begin{proof}[Proof of Theorem~A] The proposition shows that each connected component of $\Ram_\varphi$ contains at least two critical points, while the Hurwitz formula bounds the number of critical points of a separable rational function by $2\deg(\varphi) - 2$. Hence Theorem~A follows in the separable case. Recall that if $\varphi$ is inseparable, then $\Ram_\varphi = \Berk$ and Theorem~A is trivial. 
\end{proof}


\begin{remark}
	If the characteristic of the field $k$ is positive, then it is possible to have a connected component of $\Ram_\varphi$ containing only one critical point (when counted \textit{without} weight). For example, this is the case for any polynomial function of the form $\varphi(z) = f(z^p) + az$, where $f \in k[z]$ is a nonconstant polynomial and $a \in k$ is nonzero.	
\end{remark}

\begin{proof}[Proof of Proposition~\ref{Prop: 2 critical points}]
	If $\varphi$ is inseparable, then $\Ram_\varphi = \Berk = X$, and $\varphi$ has infinitely many critical points. So the result is trivial in this case. 
	
	Suppose now that $\varphi$ is separable. Let $\{U_\alpha\}$ be the collection of connected components of $\Berk \smallsetminus X$. Note that each $U_\alpha$ is an open Berkovich disk with a type~II endpoint $x_\alpha$ (Proposition~\ref{Prop: Expansion Properties}). Let $\vec{v}_\alpha \in T_{x_\alpha}$ be the tangent direction such that $U_\alpha = \BB_{x_\alpha}(\vec{v}_\alpha)^-$. Then $m_\varphi(x_\alpha, \vec{v}_\alpha) = 1$, since otherwise $U_\alpha \cap X$ would be nonempty. Let $y  = \varphi(x)$. For each index $\alpha$, we apply Corollary~\ref{Rem: Lower Bound} and Proposition~\ref{Prop: Critical surplus} to find that 
	\[
		\#\{\zeta \in U_\alpha : \varphi(\zeta) = y\} \geq s_\varphi(U_\alpha) = \frac{1}{2} \sum_{c \in 
		\Crit(\varphi) \cap U_\alpha} w_\varphi(c).
	\]	
Hence we obtain the estimate
	\begin{align*} 
		\deg(\varphi) = \# \{\zeta \in \Berk : \varphi(\zeta) = y \} 
			&\geq m_\varphi(x) + \sum_\alpha \#\{\zeta \in U_\alpha : \varphi(\zeta) = y\} \\
			&\geq m_\varphi(x) + \frac{1}{2} \sum_\alpha \sum_{c \in \Crit(\varphi) \cap U_\alpha} w_\varphi(c) \\
			&= m_\varphi(x) + \frac{1}{2} \sum_{c \in \Crit(\varphi) \smallsetminus  X} w_\varphi(c).
	\end{align*}
Completing the sum over all critical points and applying the Hurwitz Formula gives 
	\[
		\sum_{c \in \Crit(\varphi) \cap X} w_\varphi(c) \geq \sum_{c \in \Crit(\varphi)} w_\varphi(c)
			+ 2\left[ m_\varphi(x) - \deg(\varphi)\right]		
			 = 2m_\varphi(x) - 2.  \qedhere 
	\]
\end{proof}

	Before describing the part of the ramification locus lying outside the connected hull of the critical points, we need a couple of technical lemmas.

\begin{lem}
\label{Lem: Inseparable reduction}
	Let $\varphi = f / g \in k(z)$ be a nonconstant rational function in normalized form with nonconstant reduction. The following are equivalent:
	\begin{enumerate}
		\item\label{Item: Derivative reduction} $\widetilde{(\varphi')} = 0$
		\item $\widetilde{\wronsk}_\varphi = 0$ (where $\wronsk_\varphi$ is the Wronskian of $\varphi = f / g$)
		\item\label{Item: Inseparable} $\varphi$ has inseparable reduction at the Gauss point
	\end{enumerate}
\end{lem}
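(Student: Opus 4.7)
The plan is to split the three-way equivalence into two independent implications: (1) $\Leftrightarrow$ (2) via the identity $\wronsk_\varphi = g^2 \varphi'$ and the multiplicativity of the Gauss-point seminorm, and (2) $\Leftrightarrow$ (3) via the relation $\widetilde{\wronsk}_\varphi = \tilde g^2 \cdot (\tilde\varphi)'$ combined with the classical characterization of inseparability by vanishing of the derivative. The global hypothesis I will invoke throughout is that $\varphi$ has nonconstant reduction, which by definition forces $g \notin k^{\circ\circ}[z]$, so $\tilde g \neq 0$ and equivalently $|g(\zeta_{0,1})| = 1$.

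For (1) $\Leftrightarrow$ (2), I would pass to absolute values at $\zeta_{0,1}$ in the identity $\wronsk_\varphi = g^2 \varphi'$, valid in $k(z)$. Since the Gauss point seminorm is multiplicative, this yields $|\wronsk_\varphi(\zeta_{0,1})| = |\varphi'(\zeta_{0,1})|$ once $|g(\zeta_{0,1})| = 1$ is substituted. The reduction of a rational function vanishes exactly when its absolute value at $\zeta_{0,1}$ is strictly less than one, so both reductions vanish simultaneously.

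For (2) $\Leftrightarrow$ (3), I would first note that coefficient reduction $k^\circ[z] \to \tilde k[z]$ is a ring homomorphism that commutes with formal differentiation, giving $\widetilde{\wronsk}_\varphi = \tilde f' \tilde g - \tilde f \tilde g'$; applying the quotient rule to $\tilde\varphi = \tilde f/\tilde g \in \tilde k(z)$ and using $\tilde g \neq 0$ then yields the equivalence $\widetilde{\wronsk}_\varphi = 0 \iff (\tilde\varphi)' = 0$. The remaining step is the classical fact that $(\tilde\varphi)' = 0$ is equivalent to $\tilde\varphi$ being inseparable, which I would prove inline since the supporting proposition on inseparable items is commented out in the excerpt. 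One direction is immediate (differentiate $\chi(z^p)$); for the converse, write $\tilde\varphi = F/G$ in lowest terms, deduce $G \mid G'$ from $F'G = FG'$ and the coprimality of $F$ and $G$, conclude $G' = 0$ by degree reasons, and similarly $F' = 0$, so that $F, G \in \tilde k[z^p]$. When the residue characteristic is zero, (3) is vacuously false by the paper's definition, and (2) also fails because $(\tilde\varphi)' = 0$ would force $\tilde\varphi$ to be constant, contradicting the nonconstant reduction hypothesis.

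The main subtlety, beyond careful bookkeeping, is the potential mismatch between the normalization of $\varphi = f/g$ (requiring $f, g$ to be coprime in $k[z]$) and the possibility that $\tilde f, \tilde g$ share a factor in $\tilde k[z]$, so that the expression $\tilde f/\tilde g$ need not be in lowest terms. My approach sidesteps this by treating $\tilde f/\tilde g$ as a single element of $\tilde k(z)$ and invoking the quotient rule formally, and by working with the Gauss norm, which is insensitive to such cancellation.
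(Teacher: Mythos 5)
Your proof is correct and follows essentially the same route as the paper: the key step in both is the identity $\widetilde{\wronsk}_\varphi = \tilde g^2\cdot(\tilde\varphi)'$ (the paper verifies it by explicitly cancelling $h=\gcd(\tilde f,\tilde g)$, you by applying the quotient rule formally in $\tilde k(z)$, which amounts to the same computation), followed by the standard fact that in positive characteristic the kernel of formal differentiation on $\tilde k(z)$ consists precisely of the constants and the inseparable functions. You are somewhat more careful than the published proof in two respects --- you reprove the derivative/inseparability characterization inline (the paper asserts it, having removed the supporting proposition), and you explicitly dispose of the residue-characteristic-zero case --- but these are refinements of the same argument rather than a different approach.
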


\begin{proof}
	The equivalence of the first two statements is immediate since $\wronsk_\varphi$ is the numerator of $\varphi'$. Write $h = \gcd(\tilde{f}, \tilde{g})$, $f_1 = \tilde{f} / h$, and $g_1 = \tilde{g} / h$. Then
		\[
			\widetilde{(\varphi')} = \frac{\tilde{f}' \tilde{g} - \tilde{f} \tilde{g}'}{\tilde{g}^2}
				= \frac{(f_1h' + f_1' h) g_1 h - f_1 h (g_1 h' + g_1' h)}{g_1^2 h^2}
				= \frac{f_1' g_1 - f_1 g_1'}{g_1^2} = (\tilde{\varphi})'.
		\]
Hence we may write $\widetilde{\varphi}'$ without ambiguity. Inseparable rational functions are precisely the kernel of the formal derivative operator; equivalence of \eqref{Item: Derivative reduction} and \eqref{Item: Inseparable} follows.
\end{proof}

\begin{lem}
\label{Lem: First Mapping Lemma}
	Let $\varphi \in k(z)$ be a nonconstant rational function satisfying the following hypotheses:
	\begin{itemize}
		\item $\varphi$ is not injective on the classical disk $D(0,1)^-$, and
		\item $\varphi$ has no critical point in the classical disk $D(0,1)^-$.
	\end{itemize}
Then $0 < p \leq \deg(\varphi)$ and $\varphi$ has inseparable reduction at the Gauss point. 
\end{lem}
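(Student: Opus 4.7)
The plan is to apply the Critical Surplus formula (Proposition~\ref{Prop: Critical surplus}) at the Gauss point in the direction $\vec{0}$, combined with a Wronskian analysis. Set $\BB = \DD(0,1)^- = \BB_{\zeta_{0,1}}(\vec{0})^-$, $m = m_\varphi(\zeta_{0,1}, \vec{0})$, and $s = s_\varphi(\zeta_{0,1}, \vec{0})$. The non-injectivity hypothesis provides distinct classical points $a, b \in D(0,1)^-$ with common image $y = \varphi(a) = \varphi(b)$; since neither is critical, each contributes multiplicity one to $\#(\varphi^{-1}(y) \cap \BB)$, so this count is at least $2$. By Proposition~\ref{Prop: Rivera-Letelier Mapping} this count equals $m$, $s+m$, or $s$ in the three cases ($\varphi(\BB) = \BB'$, or $\varphi(\BB) = \Berk$ with $y \in \BB'$ or $y \notin \BB'$), and in every case we obtain $s + m \geq 2$. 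The no-critical-point hypothesis makes the left-hand side of Proposition~\ref{Prop: Critical surplus} vanish; if $p \nmid m$, the formula yields $2s + m - 1 = 0$, forcing $s = 0$ and $m = 1$ and contradicting $s + m \geq 2$. Hence $p \mid m$, yielding $0 < p \leq m \leq \deg(\varphi)$.

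For the inseparable-reduction conclusion, first post-compose $\varphi$ with a suitable $\sigma_2 \in \PGL_2(k)$ so that $\varphi$ fixes $\zeta_{0,1}$; this preserves both hypotheses (since $\sigma_2$ acts only on the target and is unramified) as well as the property of having inseparable reduction at $\zeta_{0,1}$. In particular, $\varphi$ now has nonconstant reduction. Write $\varphi = f/g$ in normalized form and set $h = \gcd(\tilde f, \tilde g) \in \tilde k[z]$; the proof of Proposition~\ref{Prop: Critical surplus} establishes the identity $\widetilde{\wronsk}_\varphi = h^2 \wronsk_{\tilde\varphi}$, so by Lemma~\ref{Lem: Inseparable reduction} it suffices to show $\widetilde{\wronsk}_\varphi = 0$, i.e., $\|\wronsk_\varphi\| < 1$. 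Suppose toward contradiction that $\|\wronsk_\varphi\| = 1$. Factoring $\wronsk_\varphi(z) = \alpha \prod_i (z - c_i)$ over $k$, the no-critical-point hypothesis forces $|c_i| \geq 1$ for every $i$; an elementary Gauss-norm computation then gives $|\wronsk_\varphi(0)| = |\alpha|\prod_i |c_i| = \|\wronsk_\varphi\| = 1$, so $\widetilde{\wronsk}_\varphi(0) \neq 0$, whence $\wronsk_{\tilde\varphi}(0) \neq 0$ and $0$ is not an affine critical point of $\tilde\varphi$. On the other hand, the Algebraic Reduction Formula identifies $m_{\tilde\varphi}(0) = m \geq p \geq 2$, so $0$ is a wildly ramified point of $\tilde\varphi$; for $\tilde\varphi$ separable this would force $0$ to be critical, the desired contradiction.

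The main subtlety is the Gauss-norm identity $|\wronsk_\varphi(0)| = \|\wronsk_\varphi\|$ in the second step, which is a small but crucial computation hinging on the fact that every root of the Wronskian has absolute value at least $1$. Everything else is bookkeeping: organizing the three cases of Proposition~\ref{Prop: Rivera-Letelier Mapping} in the first step (they all collapse to $s+m \geq 2$), and translating between the ramification of $\tilde\varphi$ at $0$ and the Wronskian factorization.
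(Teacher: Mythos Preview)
Your argument is correct, and it takes a genuinely different route from the paper's own proof. The paper proceeds by a direct Newton polygon analysis of the Wronskian $\wronsk_\varphi$: after the same coordinate change, it computes explicitly that the constant coefficient $c_0$ satisfies $|c_0|<1$ (from $s+m>1$) and that $|c_{2s+m-1}|=|m|$; if $p\nmid m$ this forces a segment of negative slope, hence a root of $\wronsk_\varphi$ in $D(0,1)^-$, contradicting the hypothesis. The inseparable-reduction conclusion is then obtained by the same Newton polygon reasoning: any coefficient of absolute value~$1$ would produce such a root, so $\widetilde{\wronsk}_\varphi=0$ and Lemma~\ref{Lem: Inseparable reduction} applies.

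By contrast, you package the first step entirely through Proposition~\ref{Prop: Critical surplus}, reducing the $p\mid m$ conclusion to the incompatibility of $2s+m-1=0$ with $s+m\geq 2$; this avoids any explicit coefficient computation. Your second step replaces the Newton polygon with the Gauss-norm identity $|\wronsk_\varphi(0)|=\|\wronsk_\varphi\|$ (valid because every root lies outside $D(0,1)^-$) together with the factorization $\widetilde{\wronsk}_\varphi=h^2\wronsk_{\tilde\varphi}$ and the Algebraic Reduction Formula, turning the contradiction into ``$0$ is simultaneously critical and non-critical for $\tilde\varphi$.'' The paper's approach is more self-contained (it does not invoke Proposition~\ref{Prop: Critical surplus}, whose proof already contains a Wronskian computation of similar flavor), while yours is more conceptual and makes the role of the reduced map $\tilde\varphi$ transparent. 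One small remark: your phrase ``for $\tilde\varphi$ separable'' is redundant, since the standing assumption $\|\wronsk_\varphi\|=1$ already forces $\wronsk_{\tilde\varphi}\not\equiv 0$ via the factorization, hence separability of $\tilde\varphi$.
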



\begin{proof}
	We may make a change of coordinates on the target so that $\varphi(\zeta_{0,1}) = \zeta_{0,1}$ and $\varphi_*(\vec{0}) = \vec{0}$. 
	Write $\varphi = f / g$ in normalized form with 
		\begin{eqnarray*}	
			f(z) &=& a_dz^d + a_{d-1}z^{d-1} + \cdots + a_0, \\
			g(z) &=& b_dz^d + b_{d-1}z^{d-1} + \cdots + b_0 ,		
		\end{eqnarray*}
where $a_i, b_j \in k^\circ$. Assume $a_d$ or $b_d$ is nonzero. Write $m = m_\varphi(\zeta_{0,1}, \vec 0)$ and $s = s_\varphi(\zeta_{0,1}, \vec 0)$. The proof of Lemma~\ref{Lem: Surplus Multiplicity} shows $z^{m+s} \mid \mid \tilde{f}$ and $z^s \mid \mid \tilde{g}$. Equivalently, we have
	\begin{align*}
		|a_i| < 1 &\text{ for $0 \leq i \leq m+ s - 1$ and } |a_{m+s}| = 1; \\
		|b_j| < 1 &\text{ for $0 \leq j \leq s - 1$ and } |b_{s}| = 1.
	\end{align*}

	We will now show that the first segment of the Newton polygon of the Wronskian $\wronsk_\varphi$ has negative slope if $p \nmid m$, which is equivalent to saying that $D(0, 1)^-$ contains a root of the Wronskian --- i.e., a critical point of $\varphi$. Evidently this is a contradiction. 
	
	Write $\wronsk_\varphi(z) = \sum c_j z^j \in k^\circ[z]$. From \eqref{Eq: Explicit Wronskian} we see that the constant coefficient of $\wronsk_\varphi$ is $c_0 = a_1b_0 - a_0 b_1$. Since $\varphi$ is not injective on $\DD(0,1)^-$, we find $s+m > 1$, so that both $a_0, a_1 \in k^{\circ \circ}$, which implies $|c_0| < 1$. We also see that the coefficient on the monomial $z^{2s + m - 1}$ is 
	\benn
	\label{Eq: special coefficient}
		c_{2s+m-1} = \sum_{n \neq m + s} ( 2n - 2s - m) a_n b_{2s + m - n} + m a_{s +m} b_s.
	\eenn
We know that $|a_n| < 1$ for $n < s + m$ and that $|b_{2s + m - n}| < 1$ for $2s + m - n < s$, or equivalently when $n > s + m$. So each of the terms in the above sum has absolute value strictly less than~1, while the final term has absolute value $|m|$. If $p \nmid m$, the final term has absolute value~1 and hence dominates the sum. This means  the point $(2s + m - 1, 0)$ lies on the Newton polygon of $\wronsk_\varphi$ (although it may not be a vertex). Hence the first segment of the Newton polygon of $\varphi$ has negative slope. 

	Thus we conclude that $p \mid m \leq \deg(\varphi)$, which gives the desired bounds on the residue characteristic in the lemma. Finally, observe that if any coefficient $c_\ell$ has absolute value~1, then as above we deduce the existence of a critical point of $\varphi$ in the disk $D(0,1)^-$. Thus $|c_\ell| < 1$ for all $\ell \geq 0$. It follows that $\widetilde{\varphi'} = 0$, and an application of Lemma~\ref{Lem: Inseparable reduction} completes the proof. 
\end{proof}

\begin{prop}
\label{Prop: Inseparable Boundary}
	Let $\varphi \in k(z)$ be a nonconstant rational function. Let $U$ be an open Berkovich disk disjoint from $\Hull(\Crit(\varphi))$ with type~II boundary point~$x$. Suppose $U \cap \Ram_\varphi$ is nonempty. Then the following assertions are true: 
	\begin{enumerate}
		\item $0 < p \leq \deg(\varphi)$;
		\item $\varphi$ has inseparable reduction at $x$; and
		\item $U \cap \Ram_\varphi$ is connected (for both the weak and strong topologies).
	\end{enumerate}
\end{prop}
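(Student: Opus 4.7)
The plan is to reduce the statement to the standard setting $x = \zeta_{0,1}$, $U = \DD(0,1)^-$, apply Lemma~\ref{Lem: First Mapping Lemma} to obtain parts~(1) and~(2), and then establish~(3) by a tree-convexity argument.

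First, using transitivity of $\PGL_2(k)$ on type~II points together with an appropriate rotation fixing the Gauss point, I would conjugate $\varphi$ by suitable elements of $\PGL_2(k)$ to arrange $x = \zeta_{0,1}$ and $U = \DD(0,1)^-$; the hypotheses transform equivariantly by Corollary~\ref{Cor: Coord Change}. Since $\Crit(\varphi) \subseteq \Hull(\Crit(\varphi))$ is disjoint from $U$, no critical point lies in the classical disk $D(0,1)^-$. To verify the non-injectivity hypothesis of Lemma~\ref{Lem: First Mapping Lemma}, I would pick $y \in U \cap \Ram_\varphi$ and a $\varphi$-saturated weak neighborhood $V \subseteq U$ of $y$; classical points are dense, so some $b \in \varphi(V) \cap \PP^1(k)$ exists, and Proposition~\ref{Prop: Top Characterization} then produces $m_\varphi(y) \geq 2$ preimages of $b$ in $V \cap \PP^1(k) \subseteq D(0,1)^-$. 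Lemma~\ref{Lem: First Mapping Lemma} now delivers both $0 < p \leq \deg(\varphi)$ and inseparable reduction at the Gauss point $x$, proving~(1) and~(2).

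For~(3), the crucial tree-theoretic fact is that every connected subset $A \subseteq \Berk$ is arc-convex: the unique arc $[a,b]$ between any two points $a,b \in A$ lies in $A$, since otherwise a point $c \in [a,b] \smallsetminus A$ would separate $a$ and $b$ into distinct connected components of $\Berk \smallsetminus \{c\}$, contradicting connectedness of $A$. The disk $U$ is itself arc-convex, being a connected component of $\Berk \smallsetminus \{x\}$. Now let $W$ be any connected component of $\Ram_\varphi$ meeting $U$. Proposition~\ref{Prop: 2 critical points} applied at any point of $W \cap U$ guarantees critical points in $W$, which must lie outside $U$; any arc from a point of $W \cap U$ to a point of $W \smallsetminus U$ remains in $W$ by arc-convexity and must pass through the unique boundary point $\partial U = \{x\}$, so $x \in W$. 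Since distinct components of $\Ram_\varphi$ are disjoint, at most one component meets $U$; by hypothesis, exactly one does, call it $W$. Then $U \cap \Ram_\varphi = U \cap W$ is an intersection of two arc-convex sets, hence arc-convex, hence connected (and Proposition~\ref{Prop: Strong vs. Weak} renders this unambiguous between the weak and strong topologies).

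The main obstacle is the application of Lemma~\ref{Lem: First Mapping Lemma}, which upgrades local non-injectivity on a single classical disk to the global conclusions about residue characteristic and inseparable reduction; once those are in hand, the convexity-based argument for~(3) is essentially the observation that, in the absence of critical points inside $U$, every ramification component meeting $U$ is forced to pass through the single boundary point $x$.
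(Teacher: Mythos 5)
Your proof is correct and follows essentially the same route as the paper's: normalize to $x = \zeta_{0,1}$ and $U = \DD(0,1)^-$, invoke Lemma~\ref{Lem: First Mapping Lemma} for parts (1) and (2), and for (3) use Proposition~\ref{Prop: 2 critical points} together with the fact that $U$ has a single boundary point $x$, so that any component of $\Ram_\varphi$ meeting $U$ must contain $x$. One small point worth making explicit: when you verify the non-injectivity hypothesis of Lemma~\ref{Lem: First Mapping Lemma}, the $m_\varphi(y)\geq 2$ preimages of $b$ produced by Proposition~\ref{Prop: Top Characterization} are counted \emph{with multiplicity}, and they land at distinct classical points of $D(0,1)^-$ precisely because $U$ contains no critical point --- this step is what converts the multiplicity count into actual non-injectivity on the classical disk.
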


\begin{proof}
	Change coordinates on the source and target so that $x = \varphi(x) = \zeta_{0,1}$ and $U = \DD(0,1)^-$. The assumption $U \cap \Ram_\varphi \neq \emptyset$ implies that $\varphi$ is not injective on $U$. Also, $U$ contains no critical point by hypothesis. Thus $\varphi$ has inseparable reduction at $x$ and $0 < p \leq \deg(\varphi)$ (Lemma~\ref{Lem: First Mapping Lemma}). 	
	
	If $U \cap \Ram_\varphi$ were disconnected, then $U$ would contain an entire connected component of $\Ram_\varphi$. As $U$ contains no critical point, this contradicts Proposition~\ref{Prop: 2 critical points}. 
\end{proof}

Recall from the introduction that a rational function $\varphi$ is \textbf{tame} if its ramification locus has finitely many branch points. 

\begin{cor}
\label{Cor: Large Res Char is Tame}
	Let $\varphi \in k(z)$ be a nonconstant rational function. Suppose that the residue characteristic of $k$ 
	satisfies $p = 0$ or $p > \deg(\varphi)$. Then $\varphi$ is tame. 
\end{cor}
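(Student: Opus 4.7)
The plan is to show, under the hypothesis $p = 0$ or $p > \deg(\varphi)$, that the ramification locus is contained in the connected hull of the (finitely many) critical points, and then conclude tameness by a purely tree-theoretic observation.

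First I would verify that $\varphi$ is separable: if $\varphi$ were inseparable, then $\varphi(z) = \psi(z^p)$ would force $p \leq \deg(\varphi)$ with $p > 0$, contrary to our hypothesis. By the Hurwitz Formula (Proposition~\ref{Prop: Hurwitz}), $\Crit(\varphi)$ is therefore finite, of cardinality at most $2\deg(\varphi) - 2$.

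Next I would show $\Ram_\varphi \subset \Hull(\Crit(\varphi))$. Suppose for contradiction that some $x \in \Ram_\varphi$ lies outside the hull. Since $\Hull(\Crit(\varphi))$ is closed and connected, the segment from $x$ to the hull meets the hull at a unique first point $y$, and we may pick a type~II point $z$ strictly in the open segment $(y, x)$ (type~II points are dense). Let $\vec{v} \in T_z$ be the tangent direction pointing toward $x$ and set $U = \BB_z(\vec{v})^-$. Then $x \in U \cap \Ram_\varphi$, and every path from a point of $U$ to $\Hull(\Crit(\varphi))$ must cross $z \notin \Hull(\Crit(\varphi))$, so $U$ is disjoint from the hull. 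Proposition~\ref{Prop: Inseparable Boundary} now forces $0 < p \leq \deg(\varphi)$, contradicting our hypothesis on the residue characteristic.

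Finally I would invoke the tree structure of $\Berk$. The connected hull of a finite set of points in a uniquely path-connected space is a finite topological tree, with only finitely many branch points (i.e. points possessing $\geq 3$ tangent directions into the hull). If a point $q \in \Ram_\varphi$ has three pairwise disjoint tangent directions intersecting $\Ram_\varphi$, those three directions also meet the ambient set $\Hull(\Crit(\varphi))$, so $q$ is a branch point of the hull. Hence the branch points of $\Ram_\varphi$ form a subset of the branch points of $\Hull(\Crit(\varphi))$, and in particular constitute a finite set, proving that $\varphi$ is tame. The only step that requires any real work is the containment $\Ram_\varphi \subset \Hull(\Crit(\varphi))$, and the technical content there is packaged into Proposition~\ref{Prop: Inseparable Boundary}; the choice of an auxiliary type~II boundary point $z$ is the one subtle manoeuvre, forced by the fact that the proposition is stated only for disks with type~II boundary.
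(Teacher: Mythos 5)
Your proof is correct and takes essentially the same route as the paper: both reduce to showing $\Ram_\varphi \subset \Hull(\Crit(\varphi))$ via Proposition~\ref{Prop: Inseparable Boundary}, then conclude tameness from the fact that the hull of finitely many critical points is a finite tree. The only variation is that the paper applies that proposition directly to a connected component $\BB$ of $\Berk \smallsetminus \Hull(\Crit(\varphi))$ that meets $\Ram_\varphi$ (asserting, without spelling it out, that its boundary point is of type~II), whereas you interpolate an explicit type~II point $z$ on the ray from the hull to $x$ and use the disk $\BB_z(\vec{v})^-$, which neatly sidesteps having to justify that assertion.
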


\begin{proof}
	Note $\varphi$ has at least~2 distinct critical points, so that $\Hull(\Crit(\varphi))$ is not reduced to a point. Suppose the result is false, and let $\BB$ be a connected component of $\Berk \smallsetminus \Hull(\Crit(\varphi))$ that meets $\Ram_\varphi$. Then its boundary is of type~II. Proposition~\ref{Prop: Inseparable Boundary} implies that $0 < p \leq \deg(\varphi)$, a contradiction. Hence $\Ram_\varphi \subset \Hull(\Crit(\varphi))$, and $\varphi$ is tame. 
\end{proof}

	We close this section by showing that Theorem~A is optimal. 

\begin{prop}
\label{Prop: n components}
	Let $k$ be an algebraically closed field that is complete with respect to a nontrivial non-Archimedean absolute value. Fix integers $1 \leq n < d$. Then there exists a rational function $\varphi \in k(z)$ of degree~$d$ whose ramification locus $\Ram_\varphi$ has precisely $n$ connected components.
\end{prop}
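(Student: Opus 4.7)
The plan is to exhibit an explicit degree-$d$ rational function whose ramification locus has exactly $n$ components. Choose $a_1,\ldots,a_{n-1}\in k^\circ$ with pairwise distinct nonzero residues (possible since $\tilde k$ is infinite) and $\alpha_1,\ldots,\alpha_{n-1}\in k^{\circ\circ}\smallsetminus\{0\}$ of sufficiently small absolute value, and set
\[
\varphi(z)=z^{d-n+1}\prod_{i=1}^{n-1}\frac{z-a_i+\alpha_i}{z-a_i}.
\]
Since numerator and denominator share no root, $\deg\varphi=d$.

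First I would analyze the Gauss point. In normalized form each Möbius factor reduces to $1$, so $\widetilde\varphi(z)=z^{d-n+1}$. The Algebraic Reduction Formula gives $m_\varphi(\zeta_{0,1})=d-n+1>1$, and together with Lemma~\ref{Lem: Surplus Multiplicity} (applied to $H=\gcd$ of the homogenized reductions) yields $m_\varphi(\zeta_{0,1},\vec 0)=m_\varphi(\zeta_{0,1},\vec\infty)=d-n+1$, $m_\varphi(\zeta_{0,1},\vec c)=1$ for $c\in\tilde k^\times$, and $s_\varphi(\zeta_{0,1},\vec{\tilde a_i})=1$ for each $i$. Conjugating the source by $z\mapsto rz$ and rescaling the target by $r^{-(d-n+1)}$, the analogous reduction produces the same profile at every $\zeta_{0,r}$ with $r\in|k^\times|\cap(0,\infty)$: multiplicity $d-n+1$ at the point, $d-n+1$ in the on-segment directions, and $1$ laterally. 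Upper semicontinuity of $m_\varphi$ pushes this through the intervening type~III points, so $[0,\infty]\subset\Ram_\varphi$ and no lateral branch emanates from it.

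Second, for each $i$, I would show $\BB_{\zeta_{0,1}}(\vec{\tilde a_i})^-$ contains exactly one additional component. Because $m_\varphi(\zeta_{0,1},\vec{\tilde a_i})=1$, Proposition~\ref{Prop: Directional Definition} produces an initial subsegment of the disk on which $m_\varphi\equiv 1$, so any ramification inside the disk is disconnected from $\zeta_{0,1}$. A Wronskian calculation starting from $\varphi'/\varphi=(d-n+1)/z-\sum_i\alpha_i/[(z-a_i)(z-a_i+\alpha_i)]$ locates two of the $2d-2$ critical points inside this disk, clustered near $a_i$ at distance $\sim|\alpha_i|^{1/2}$ and each of weight~$1$; the remaining critical weight $2(d-n)$ sits at $0$ and $\infty$. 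Proposition~\ref{Prop: 2 critical points} forces every component of $\Ram_\varphi$ to carry at least two critical weights, so the ramification in this disk forms exactly one component. Combining the main $[0,\infty]$-component with the $n-1$ disk components yields precisely $n$ components.

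The main obstacle will be residue-characteristic divisibility of $d-n+1$. When $p\mid d-n+1$, the reduction $z^{d-n+1}$ is inseparable, so Theorem~B places the Gauss point in the strong interior of $\Ram_\varphi$, and a strong-topology neighborhood connects the disk components back to $[0,\infty]$, destroying the count. To circumvent this, one substitutes for $z^{d-n+1}$ a degree-$(d-n+1)$ polynomial with separable reduction that remains totally ramified at $0$ and $\infty$ (for example, perturbing coefficients within $k^{\circ\circ}\smallsetminus\{0\}$ so the reduction is no longer a pure $p$-th power while retaining the branching structure at the endpoints), and then rerun the argument with weights slightly rearranged. A similar small adjustment handles the case $p=2$, where the two critical points near $a_i$ risk colliding into a single point of weight $2$.
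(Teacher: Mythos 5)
Your construction is genuinely different from the paper's. You take $\varphi(z)=z^{d-n+1}\prod_{i}\frac{z-a_i+\alpha_i}{z-a_i}$, making the Gauss point \emph{ramified} with multiplicity $d-n+1$ and letting the segment $[0,\infty]$ serve as the main component, with $n-1$ more components sprouting in the disks $\BB_{\zeta_{0,1}}(\vec{\tilde a_i})^-$. The paper instead arranges for $m_\varphi(\zeta_{0,1})=1$, so the Gauss point is unramified and all $n$ components are isolated inside subdisks; the ``core'' factor is $\psi(z/t)$ for a rational $\psi$ chosen to have \emph{generic separable reduction}, which sidesteps residue-characteristic issues at the start. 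Your version, when it applies, is cleaner in that the Wronskian computation you sketch is superseded by Proposition~\ref{Prop: Critical surplus}: since $m_\varphi(\zeta_{0,1},\vec{\tilde a_i})=1$ and $s_\varphi(\zeta_{0,1},\vec{\tilde a_i})=1$, each disk holds exactly $2$ critical weights, and Proposition~\ref{Prop: 2 critical points} then forces exactly one component per disk, so the detailed $|\alpha_i|^{1/2}$ estimate and the separate worry about $p=2$ collisions are unnecessary.

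The genuine gap is the case $p\mid d-n+1$, which you correctly flag but do not repair. Your proposed remedy — ``a degree-$(d-n+1)$ polynomial with separable reduction that remains totally ramified at $0$ and $\infty$'' — does not exist: a degree-$m$ map totally ramified at both $0$ and $\infty$ is forced to be $cz^m$, so no coefficient perturbation can keep the branching structure at both endpoints while changing the reduction. And the failure is not cosmetic: when $p\mid d-n+1$, the weight at $0$ (and at $\infty$) jumps to at least $d-n+1$ rather than $d-n$, so the Hurwitz count $2d-2$ no longer leaves $2$ critical weights for each $a_i$-disk — for $n=2$, for instance, the disk near $a_1$ carries no critical points at all. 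Meanwhile every point of $(0,\infty)$ has inseparable reduction, so $\Ram_\varphi$ has positive strong width along that segment (Proposition~\ref{Prop: Interior Point}); the disk near $a_1$ is then ramified but critical-point-free, which by Proposition~\ref{Prop: 2 critical points} forces it to merge with the $[0,\infty]$ component, and the count of components collapses. A working fix essentially requires replacing $z^{d-n+1}$ by something with separable reduction and controlled directional multiplicities, which is precisely the role played by the paper's $\psi(z/t)$ with $\tilde\psi$ generic and separable.
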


\begin{proof}
	For the case $n =1$, let $\varphi$ be a polynomial of degree~$d$. Then $m_\varphi(\infty) = d$, and so the connected component $X$ of $\Ram_\varphi$ containing~$\infty$ must contain all of the critical points of $\varphi$ (Proposition~\ref{Prop: 2 critical points}). Any other connected component of $\Ram_\varphi$ would need to contain a critical point, so that $X = \Ram_\varphi$. 
	
	We assume for the remainder of the proof that $n \geq 2$. It will be convenient to set $\ell = n - 1$ and construct  a rational function whose ramification locus has $\ell + 1$ connected components. 
		
	Begin by selecting a rational function $\psi = f / g \in k(z)$ with the following properties:
	\begin{itemize}
		\item $\psi$ has degree~$d - \ell$;
		\item $\widetilde{\psi} \in \tilde k(z)$ is a separable rational function of degree~$d - \ell$; 
		\item $\infty$ is not a critical point for $\psi$;
		\item $\psi = f / g$ is normalized (see \S\ref{Sec: Basics non-Archimedean}); and
		\item $f$ and $g$ are monic of degree~$d - \ell$. 
	\end{itemize}
The set of separable rational functions in $\tilde k(z)$ of degree~$d - \ell$ with simple critical points and non-vanishing leading coefficient in numerator and denominator is a Zariski open subset of the space of all rational functions of degree~$d - \ell$. Choose such a rational function and lift its coefficients to $k^\circ$; if necessary, change coordinate on the source so that $\infty$ is not a critical point. Scaling $f$ and $g$ and perhaps making a scalar change of coordinate on the target allows one to assume $f, g$ are monic. 
	
	Now select elements $a_1, a_2, \ldots, a_\ell \in k^\circ$ with distinct nonzero images in the residue field $\tilde k$. For each $i = 2, \ldots, \ell$, choose $b_i \in k^\circ$ such that $0 < |a_i - b_i| < 1$. Choose $t \in k^{\circ \circ} \smallsetminus \{0\}$. Now we may define a rational function $\varphi \in k(z)$ by
	\[
		\varphi(z) =  \frac{(z-a_1)(z-a_2) \cdots (z-a_\ell)}{(z-b_2) \cdots (z-b_\ell)}\psi(z/t) .
	\]
	
	Evidently the numerator and denominator of $\varphi$ have degree~$d$ and~$d-1$, respectively. To show that $\varphi$ has degree~$d$, we must show that no root of the numerator of $\varphi$ coincides with a root of the denominator. Write 
	\[
		\psi(z) = \frac{z^{d-\ell} + \alpha_{d-\ell-1}z^{d-\ell - 1} + \cdots + \alpha_0}
			{z^{d-\ell} + \beta_{d-\ell-1}z^{d-\ell-1}+ \cdots + \beta_0}.
	\]
Then
	\[
		\psi(z / t) =  \frac{z^{d-\ell} + t\alpha_{d-\ell-1}z^{d-\ell - 1} + \cdots + t^{d-\ell}\alpha_0}
			{z^{d-\ell} + t\beta_{d-\ell-1}z^{d-\ell-1}+ \cdots + t^{d- \ell}\beta_0}.
	\]
A Newton polygon argument shows that the zeros and poles of $\psi(z/t)$ all lie in $D(0,1)^-$. The $a_i$'s and $b_j$'s all have absolute value~1, and $a_i \neq b_j$ for any $i, j$ by construction. Hence $\varphi$ has degree~$d$. 

	The reduction of $\varphi$ is $\widetilde{\varphi}(z) = z - \tilde a_1$. The Algebraic Reduction Formula shows $m_\varphi(\zeta_{0,1}) = 1$, which means that each connected component of $\Ram_\varphi$ lies inside a connected component of $\Berk \smallsetminus \{\zeta_{0,1}\}$. For each $i = 2, \ldots, \ell$, let $U_i$ be the connected component of $\Berk \smallsetminus \{\zeta_{0,1}\}$ containing $a_i$ (and $b_i$). First observe that the surplus multiplicity is $s_\varphi(U_i) = 1$ (Proposition~\ref{Lem: Surplus Multiplicity}). So $U_i$ contains exactly 2 critical points (counted with weights) for $i = 2, \ldots, \ell$ (Proposition~\ref{Prop: Critical surplus}), and hence $U_i$ contains a single connected component of $\Ram_\varphi$ (Proposition~\ref{Prop: 2 critical points}). Set $U_1 = \DD(0,1)^-$. Then  $s_\varphi(U_1) = d - \ell$, so that $U_1$ contains $2(d - \ell)$ critical points (counted with weights). It remains for us to show that $U_1$ contains exactly two connected components of $\Ram_\varphi$.
 	
	Define
		\[
			\eta(z) = \varphi(tz) = \frac{(tz-a_1)(tz-a_2) \cdots (tz-a_\ell)}{(tz-b_2) \cdots (tz-b_\ell)}\psi(z).
		\]
Then $\widetilde{\eta}(z) = (- \tilde a_1) \widetilde{\psi}(z)$, which has degree~$d - \ell$, and so $s_\eta(\zeta_{0,1}, \vec{\infty}) = \ell$ (Lemma~\ref{Lem: Surplus Multiplicity}). The open Berkovich disk $\BB_{\zeta_{0,|t|}}(\vec{v})^-$ contains $2\ell$ critical points, where $\vec{v}$ is the tangent vector corresponding to the connected component of $\Berk \smallsetminus \{\zeta_{0, |t|}\}$ containing $\infty$. We have already accounted for $2(\ell - 1)$ of those critical points above, and so there must be two more critical points --- and hence exactly one more component of $\Ram_\varphi$ ---  in the open annulus $\{x \in \Aff^1 : |t| < |T(x)| < 1 \}$. 

	The reduction of $\eta$ shows $m_\varphi(\zeta_{0, |t|}) = d - \ell$ (Algebraic Reduction Formula). Proposition~\ref{Prop: 2 critical points} shows the connected component of $\Ram_\varphi$ containing $\zeta_{0, |t|}$ also contains at least $2(d-\ell) - 2$ critical points. We have accounted for $2(\ell - 1) + 2 = 2\ell$ critical points in the preceding paragraphs, and we have just located $2(d- \ell) - 2$ more. The Hurwitz formula shows we have now found all of the critical points, and hence all of the connected components of $\Ram_\varphi$. That is, $\Ram_\varphi$ has $\ell+ 1$ connected components. 
\end{proof}


\section{Endpoints and Interior Points}
\label{Sec: Ends and Interior}

	Here we determine the interior and endpoints of $\Ram_\varphi$ for both the weak and strong topologies. We already saw in Proposition~\ref{Prop: Frobenius} that $\Ram_\varphi = \Berk$ if $\varphi$ is itself an inseparable rational function; here we show this is the only case in which the weak interior of $\Ram_\varphi$ is nonempty. Then we characterize the endpoints of the ramification locus and show that the strong interior of $\Ram_\varphi$ coincides with the locus of inseparable reduction. (The definitions were chosen so that this statement holds even when $\varphi$ is inseparable.) We finish the section with a discussion of tame and locally tame rational functions.

\begin{prop}
\label{Prop: Weak Interior}
	The weak interior of the ramification locus of a separable nonconstant rational function is empty.  
\end{prop}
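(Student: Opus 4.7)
The plan is to argue by contradiction: suppose the weak interior of $\Ram_\varphi$ contains a nonempty weak open set $U$. The key point is that a nonempty weak open subset of $\Berk$ is forced to contain infinitely many classical (type~I) points, and any classical point in $\Ram_\varphi$ is by definition a critical point of $\varphi$. Since the Hurwitz Formula (Proposition~\ref{Prop: Hurwitz}) bounds the number of distinct critical points of a separable rational function by $2\deg(\varphi)-2$, this will produce the desired contradiction.

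Concretely, first I would invoke the density of $\PP^1(k)$ in $\Berk$ for the weak topology (recalled in Section~\ref{Sec: Conventions}). This gives $U \cap \PP^1(k) \neq \emptyset$; moreover, I need to upgrade this to $U \cap \PP^1(k)$ being infinite. For this, I would suppose the contrary, i.e.\ that $U \cap \PP^1(k) = \{a_1,\dots,a_n\}$ is finite. Since $\Berk$ is Hausdorff, this finite set is weakly closed, so $U \smallsetminus \{a_1,\ldots,a_n\}$ is weakly open and disjoint from $\PP^1(k)$. By density this forces $U \subset \{a_1,\ldots,a_n\}$, so $U$ is a finite open set. But each classical point is a limit of type~II points (which are themselves dense in $\Berk$), so no classical point is isolated in the weak topology; hence no finite set of classical points can be weakly open. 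This contradiction shows $U \cap \PP^1(k)$ is infinite.

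Finally, by the topological characterization of multiplicity (or directly from the definition for type~I points), the inclusion $U \subset \Ram_\varphi$ implies that every classical point of $U$ has $m_\varphi > 1$ and hence is a critical point of $\varphi$. Thus $\Crit(\varphi)$ contains the infinite set $U \cap \PP^1(k)$, contradicting Proposition~\ref{Prop: Hurwitz}. I expect the main subtlety to be the density/no-isolated-points step of the argument; once that is in place the rest is a one-line application of Hurwitz.
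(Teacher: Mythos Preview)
Your proposal is correct and follows essentially the same argument as the paper: a nonempty weak open set contains infinitely many type~I points, these must all be critical, and Hurwitz forbids this for a separable map. The only difference is that you spell out in detail why a weak open set has infinitely many classical points, whereas the paper simply asserts this fact.
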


\begin{proof}
	Suppose there exists a rational function $\varphi \in k(z)$ such that the weak interior of its ramification locus is nonempty. Any weak open subset of $\Berk$ contains infinitely many points of type~I, and the type~I points of the ramification locus are precisely the critical points. Thus $\varphi$ has infinitely many critical points, and hence it must be inseparable by the Hurwitz formula. 
\end{proof}

\begin{lem}
\label{Lem: Inseparable Nearby}
	Suppose $k$ has positive residue characteristic $p$, and suppose $\varphi \in k(z)$ is a nonconstant rational function with nonconstant reduction. Let $\vec{v}$ be a tangent direction at the Gauss point of $\Berk$, and write $m = m_\varphi(\zeta_{0,1}, \vec{v})$. Then $p \mid m$ if and only if there exists a point $x \in \BB_{\zeta_{0,1}}(\vec{v})^-$ such that $\varphi$ has inseparable reduction at each point of the segment $(\zeta_{0,1}, x)$.  
\end{lem}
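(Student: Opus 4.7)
The plan is to reduce to a standard position and then attack the problem at type~II and type~III points separately, the crucial step being a Hurwitz-counting contradiction at the type~II case. First I would apply elements $\sigma_1, \sigma_2 \in \PGL_2(k^\circ)$ stabilizing the Gauss point, chosen so that $(\sigma_1)_*(\vec 0) = \vec v$ and $(\sigma_2)_*(\varphi_*(\vec v)) = \vec 0$, and replace $\varphi$ by $\sigma_2 \circ \varphi \circ \sigma_1$. Such changes preserve multiplicities (Corollary~\ref{Cor: Coord Change}), nonconstant reduction at the Gauss point, and the property of inseparable reduction at any given point, so we may assume $\vec v = \vec 0$, $\varphi(\zeta_{0,1}) = \zeta_{0,1}$, and $\varphi_*(\vec 0) = \vec 0$. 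Applying Proposition~\ref{Prop: Directional Definition} at $\zeta_{0,1}$ in direction $\vec 0$, and then at each $\zeta_{0, r}$ below it in both tangent directions along the main segment, I would obtain $r_0 \in (0,1)$ such that, for every $r \in (r_0, 1)$,
\[
m_\varphi(\zeta_{0,r}) \;=\; m_\varphi(\zeta_{0,r}, \vec v_0) \;=\; m_\varphi(\zeta_{0,r}, \vec v_\zeta) \;=\; m \qquad \text{and} \qquad \varphi(\zeta_{0,r}) \;=\; \zeta_{0,\,r^m},
\]
where $\vec v_0, \vec v_\zeta \in T_{\zeta_{0,r}}$ denote the tangent directions toward $0$ and toward the Gauss point, respectively; the stretch factor $r^m$ comes from the path-distance identity in Proposition~\ref{Prop: Directional Definition}(2).

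For the forward direction ($\Leftarrow$), suppose $\varphi$ has inseparable reduction at each point of some segment $(\zeta_{0,1}, x)$. Shrinking $x$ so this segment lies inside $\{\zeta_{0,r} : r \in (r_0, 1)\}$, I would pick any type~II point $\zeta_{0,r}$ on it; such a point exists because $|k^\times|$ is dense in $\RR_{>0}$. With $\alpha \in k$ of absolute value $r$, the definition of inseparable reduction at $\zeta_{0,r}$ says precisely that the reduction $\widetilde\psi$ of $\psi(z) = \alpha^{-m}\varphi(\alpha z)$ is inseparable. But $\deg \widetilde\psi = m_\varphi(\zeta_{0,r}) = m$ by the Algebraic Reduction Formula, so $p \mid m$.

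For the backward direction ($\Rightarrow$), assume $p \mid m$ and fix any $r \in (r_0, 1)$. If $\zeta_{0,r}$ is of type~III, Proposition~\ref{Prop: Type III} delivers inseparable reduction at $\zeta_{0,r}$ immediately from $p \mid m = m_\varphi(\zeta_{0,r})$. The type~II case is the heart of the argument. I would set $\sigma_1(z) = \alpha z$ and $\sigma_2(z) = \alpha^{-m} z$ with $|\alpha| = r$, and let $\psi = \sigma_2 \circ \varphi \circ \sigma_1$. The main bookkeeping step, which I expect to be the principal obstacle, is to verify -- by tracking how $\sigma_1, \sigma_2$ permute tangent vectors at the Gauss point and invoking the identities above -- that
\[
\psi(\zeta_{0,1}) = \zeta_{0,1}, \quad \psi_*(\vec 0) = \vec 0, \quad \psi_*(\vec\infty) = \vec\infty, \quad m_\psi(\zeta_{0,1}) = m_\psi(\zeta_{0,1}, \vec 0) = m_\psi(\zeta_{0,1}, \vec\infty) = m.
\]
Once this is in hand, the Algebraic Reduction Formula realizes $\widetilde\psi$ as a rational function of degree $m$ that is totally ramified at both $\tilde 0$ and $\tilde \infty$, and the proof concludes in one line: if $\widetilde\psi$ were separable, then since $p \mid m$ the wild-ramification inequality $w_{\widetilde\psi}(c) > m_{\widetilde\psi}(c) - 1$ would force $w_{\widetilde\psi}(0), w_{\widetilde\psi}(\infty) \geq m$, summing to at least $2m$, in contradiction to the Hurwitz identity $\sum_c w_{\widetilde\psi}(c) = 2m - 2$. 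Hence $\widetilde\psi$ is inseparable, i.e.\ $\varphi$ has inseparable reduction at $\zeta_{0,r}$.
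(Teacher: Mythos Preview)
Your argument is correct, but it follows a genuinely different route from the paper's. The paper begins by extending scalars so that $\Berk$ has only type~I and type~II points, then carries out an explicit coefficient analysis: writing $\varphi = f/g$ in normalized form and setting $\varphi_t(z) = t^{-m}\varphi(tz)$, it tracks the absolute values of the coefficients (using that $|a_{m+s}| = |b_s| = 1$ while lower-indexed coefficients are strictly smaller) to show directly that $\widetilde{\varphi}_t(z) = c z^m$ for all $|t|$ sufficiently close to~$1$. Both directions of the lemma then fall out immediately, since $cz^m$ is inseparable precisely when $p \mid m$.

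Your approach instead leans on the multiplicity machinery already developed: the Algebraic Reduction Formula identifies $\widetilde{\psi}$ as a degree-$m$ map totally ramified at $0$ and $\infty$, and you finish with a Hurwitz count. This is more structural and avoids any coefficient chasing, at the cost of handling type~II and type~III points separately (the latter via Proposition~\ref{Prop: Type III}). One small observation: your bookkeeping already pins down $\widetilde{\psi}$ completely --- a degree-$m$ rational function over $\tilde k$ with $\widetilde{\psi}(0) = 0$, $\widetilde{\psi}(\infty) = \infty$, and multiplicity $m$ at each is forced to be $cz^m$ --- so the Hurwitz contradiction, while correct, is not actually needed. Either way, the paper's computation buys an explicit $r_0$ (in terms of the coefficients of $\varphi$), whereas your argument is cleaner but less quantitative.
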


\begin{proof}
	Without loss of generality, we may replace $k$ with an algebraically closed and complete extension in order to assume that $\Berk_k$ has no point of type~III or~IV. (See \S\ref{Sec: Base Change} and \S\ref{Sec: Inseparable Reduction}.)  Moreover, we may change coordinates on the source and target in order to assume that $\vec{v} = \varphi_*(\vec{v}) = \vec{0}$. Write $m = m_\varphi(\zeta_{0,1}, \vec{0})$, and for $t \in k^{\circ \circ} \smallsetminus \{0\}$, define 
	\[
		\varphi_t(z) = t^{-m}\varphi(tz).
	\]
To prove the lemma, it suffices to show that once $\varphi_t$ is properly normalized, it has reduction $\widetilde{\varphi}_t(z) = c z^m$ for some nonzero $c \in \tilde k$ whenever $t \in k^{\circ \circ}$ has absolute value sufficiently close to $1$. Indeed, if $p \mid m$, then this shows $\varphi$ has inseparable reduction at $\zeta_{0, |t|}$. 
	
		We begin by writing $\varphi$ in normalized form as 
		\[
			\varphi(z) = \frac{a_dz^d + \cdots + a_0}{b_dz^d + \cdots + b_0},
		\]
with $a_i, b_j \in k^\circ$ and some coefficient in the numerator and denominator having absolute value~1. Let $s = s_\varphi(\zeta_{0,1}, \vec{0})$ be the associated surplus multiplicity. The Algebraic Reduction Formula and Lemma~\ref{Lem: Surplus Multiplicity} shows that $|a_{m+s}| = |b_s| = 1$, and that $|a_i| < 1$ for $i < m+s$ and that $|b_j| < 1$ for $j < s$. Now observe that
	\be
	\label{Eq: Renormalized}
		\ba
			\varphi_t(z) &= \frac{t^{-m-s}}{t^{-s}} \cdot \varphi(tz) \\
				&= \frac{a_dt^{d-m-s}z^d + \cdots + a_{m+s}z^{m+s} + \cdots + t^{-m-s}a_0}
					{b_dt^{d-s}z^d + \cdots + b_sz^s + \cdots + t^{-s}b_0}.
		\ea
	\ee

	Define $r_0$ to be the maximum element of the set
	\[
		\{|a_i|^{1 / (m+s-i)} : i = 0, \ldots, m+s-1 \} \cup \{|b_j|^{1/(s-j)} : j=0, \ldots, s-1\}.
	\]
If we assume that $r_0 < |t| < 1$, then 
	\be
	\label{Eq: Explicit Coefficients}
		\left|a_it^{i -m-s}\right| 
			\begin{cases} < 1 & \text{if $i \neq m+s$} \\ = 1 & \text{if $i = m+s$} \end{cases}, \qquad
		\left|b_jt^{j-s}\right| 
			\begin{cases} < 1 & \text{if $j \neq s$} \\ = 1 & \text{if $j = s$} \end{cases}.
	\ee
Thus the presentation of $\varphi_t$ given in \eqref{Eq: Renormalized} is normalized, and its reduction is given by $\widetilde{\varphi}_t(z) = (\tilde a_{m+s} / \tilde b_s) z^m$, as desired.
\end{proof}

\begin{prop}[Endpoints of $\Ram_\varphi$]
\label{Prop: Endpoints}
	Let $\varphi \in k(z)$ be a nonconstant rational function, and suppose $x \in \Ram_\varphi$ is an endpoint of the ramification locus. Then $x$ is of type~I,~II, or~IV.  
	\begin{enumerate}
		\item If $x$ is of type~I, then it is a critical point of $\varphi$.
		\item If $x$ is of type~II or~IV, then  $\varphi$ has inseparable reduction at every point 
			of some nonempty segment $(x,y) \subset \Ram_\varphi$. In particular, $0 < p \leq \deg(\varphi)$. 
	\end{enumerate}
\end{prop}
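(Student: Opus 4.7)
The proof proceeds by type. A type~I point of $\Berk$ lies in $\Ram_\varphi$ precisely when it has positive weight, i.e., when it is a critical point, giving Part~(1). At a type~III $x$, Proposition~\ref{Prop: Expansion Properties}\eqref{Item: Tangent Properties} forces $m_\varphi(x,\vec v)=m_\varphi(x)>1$ for both $\vec v\in T_x$, and Proposition~\ref{Prop: Directional Definition}\eqref{Item: Locally constant} then places ramified points arbitrarily close to $x$ in \emph{both} directions---contradicting the endpoint hypothesis. Thus type~III points are excluded.

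For a type~II endpoint, choose coordinates with $x=\varphi(x)=\zeta_{0,1}$ and unique ramified tangent vector $\vec v_0=\vec 0$. The endpoint hypothesis combined with Proposition~\ref{Prop: Directional Definition}\eqref{Item: Locally constant} gives $m_\varphi(x,\vec v)=1$ for all $\vec v\neq\vec v_0$. The Algebraic Reduction Formula then yields a reduction $\tilde\psi$ of degree $m=m_\varphi(x)$ with $m_{\tilde\psi}(0)=:m_0>1$ and $m_{\tilde\psi}(a)=1$ elsewhere. Inseparable $\tilde\psi$ is impossible (it would force $m_{\tilde\psi}(a)\geq p>1$ everywhere), so $\tilde\psi$ is separable and Hurwitz concentrates all weight at $0$: $w_{\tilde\psi}(0)=2m-2$. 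The tame alternative $w_{\tilde\psi}(0)=m_0-1\leq m-1<2m-2$ (using $m>1$) is ruled out, forcing $p\mid m_0$. Lemma~\ref{Lem: Inseparable Nearby} then supplies a segment $(x,y)\subset\BB_x(\vec v_0)^-$ on which $\varphi$ has inseparable reduction; $(x,y)\subset\Ram_\varphi$ because $m_\varphi\equiv m_0>1$ there.

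The type~IV case is the main obstacle: $T_x$ is a singleton, and the Algebraic Reduction Formula is not directly available at $x$. Proposition~\ref{Prop: Expansion Properties}\eqref{Item: Tangent Properties} gives $m_\varphi(x,\vec v_x)=m_\varphi(x)=:m$, and Proposition~\ref{Prop: Directional Definition}\eqref{Item: Locally constant} produces $(x,x')\subset\Ram_\varphi$ with $m_\varphi\equiv m$. To prove $p\mid m$, extend scalars via Proposition~\ref{Prop: Huge Extension} to $K/k$ so that $x_K:=\iota_k^K(x)$ is of type~II in $\Berk_K$, with $m_{\varphi_K}(x_K)=m$. Since $x$ has a single tangent vector, $\iota(\Berk_k\setminus\{x\})\subset\BB_{x_K}(\vec w_{\mathrm{out}})^-$ for $\vec w_{\mathrm{out}}:=\iota_*(\vec v_x)$; as the critical points of $\varphi_K$ coincide with those of $\varphi$ (both defined over the algebraically closed $k$) and are classical, $\Crit(\varphi_K)\subset\BB_{x_K}(\vec w_{\mathrm{out}})^-$, whence $\Hull(\Crit(\varphi_K))\cap\BB_{x_K}(\vec w)^-=\emptyset$ for every $\vec w\neq\vec w_{\mathrm{out}}$. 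Two subcases arise. If some $\vec w\neq\vec w_{\mathrm{out}}$ has $m_{\varphi_K}(x_K,\vec w)>1$, then $U=\BB_{x_K}(\vec w)^-$ is disjoint from $\Hull(\Crit(\varphi_K))$ and meets $\Ram_{\varphi_K}$, so Proposition~\ref{Prop: Inseparable Boundary} forces $\varphi_K$ to have inseparable reduction at $x_K$, giving $p\mid\deg\tilde\psi_K=m$. Otherwise $x_K$ is itself an endpoint of $\Ram_{\varphi_K}$, and running the type~II argument inside $\Berk_K$ yields $p\mid m$.

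Once $p\mid m$ is established, the inseparable-reduction conclusion on a segment $(x,y)$ of constant multiplicity $m$ follows uniformly. Interior type~III points enjoy inseparable reduction by Proposition~\ref{Prop: Type III}. At any interior type~II point $z$, both tangent directions along the segment satisfy $m_\varphi(z,\vec v)=m$, so the Algebraic Reduction Formula furnishes a degree-$m$ reduction $\tilde\psi_z$ totally ramified at two distinct points; separability would force both to be tame by Hurwitz, contradicting $p\mid m$, so $\tilde\psi_z$ must be inseparable. The inequality $0<p\leq\deg(\varphi)$ follows from $p\mid m\leq\deg(\varphi)$.
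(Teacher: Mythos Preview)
Your proof is correct. The treatment of types~I,~II, and~III matches the paper's essentially line for line. The type~IV case, however, is handled by a genuinely different route.

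The paper takes $y$ to be the nearest point to $x$ in $\Hull(\Crit(\varphi))$, extends scalars so that every point becomes type~I or~II, and then applies Proposition~\ref{Prop: Inseparable Boundary} at \emph{each} type~II point $z$ of the segment $(x_K,y_K)$: the disk at $z$ pointing toward $x_K$ is disjoint from the hull of the critical points yet meets $\Ram_{\varphi_K}$ (it contains $x_K$), so $\varphi_K$ has inseparable reduction at $z$. This yields the segment conclusion in one stroke. Your argument instead works at the single point $x_K$, splitting into the two cases ``$x_K$ is still an endpoint'' versus ``$x_K$ has a second ramified direction,'' to extract only the divisibility $p\mid m$; you then supply a separate, self-contained argument that any segment of constant multiplicity $m$ with $p\mid m$ consists of points of inseparable reduction (Proposition~\ref{Prop: Type III} for type~III points, and a two-totally-ramified-points Hurwitz contradiction for type~II points). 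The paper's route is shorter and more uniform; yours has the virtue that the final-paragraph lemma---``constant multiplicity $m$ with $p\mid m$ on a segment forces inseparable reduction''---is a clean standalone statement that in fact subsumes one direction of Lemma~\ref{Lem: Inseparable Nearby}, so your appeal to that lemma in the type~II case is actually redundant.
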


\begin{proof}
	Suppose first that $x \in \Ram_\varphi$ is of type~III, so that it has exactly two tangent directions $\vec{v}_1$ and $\vec{v}_2$. The local degree satisfies $m_\varphi(x, \vec{v}_1) = m_\varphi(x, \vec{v}_2) > 1$ (Proposition~\ref{Prop: Expansion Properties}\eqref{Item: Tangent Properties}). Hence $x$ cannot be an endpoint of $\Ram_\varphi$ (Proposition~\ref{Prop: Directional Definition}\eqref{Item: Locally constant}).
	
	Now let $x \in \Ram_\varphi$ be of type~I. Then $m_\varphi(x) > 1$ is the usual algebraic multiplicity, and hence $x$ must be a critical point of $\varphi$.

	Next suppose that $x$ is a type~II endpoint of $\Ram_\varphi$. After a change of coordinate on the source and target, we may suppose that $x = \zeta_{0,1} = \varphi(\zeta_{0,1})$. Then $\varphi$ has nonconstant reduction at $x$. Since $x$ is an endpoint, we see that $m_\varphi(x, \vec{v}) > 1$ for precisely one tangent direction $\vec{v}$. If $p \nmid m_\varphi(x, \vec{v})$, then the weight of the reduction $\widetilde \varphi$ at $\vec{v}$ satisfies 
	\[
		w_{\tilde \varphi}(\vec{v}) = m_\varphi(x, \vec{v}) - 1 \leq \deg(\varphi) - 1 < 2 \deg(\varphi) - 2,
	\]
in contradiction to the Hurwitz Formula. So $p \mid m_\varphi(x, \vec{v})$, and the result follows upon applying the preceding lemma.

	Now suppose that $x \in \Ram_\varphi$ is of type~IV. Let $y$ be the closest point to $x$ in $\Hull(\Crit(\varphi))$; more precisely, if $U$ is the connected component of $\Berk_k \smallsetminus \Hull(\Crit(\varphi))$ containing $x$, then $y$ is the unique boundary point of $U$. Let $K / k$ be an extension of algebraically closed and complete non-Archimedean fields so that $\Berk_K$ has no point of type~III or~IV. Write $x_K = \iota_k^K(x)$ and $y_K = \iota_k^K(y)$. Proposition~\ref{Prop: Inseparable Boundary} implies that $\varphi_K$ has inseparable reduction at every (type~II) point of the segment $(x_K,y_K)$. Hence $\varphi$ has inseparable reduction at every point of the segment $(x,y)$. 
\end{proof}

\begin{remark}
	When $x$ is an endpoint of $\Ram_\varphi$ of type~II, the induced rational function $\varphi_*: T_x \to T_{\varphi(x)}$ on tangent spaces has a very special property: it is ramified in only one direction. Such rational functions are called unicritical, and were studied in \cite{Faber_One_Critical_Point_2012}. One interesting fact is that the multiplicity at $x$ must satisfy $m_\varphi(x) \equiv 0 \text{ or } 1 \pmod p$. 
\end{remark}

	Rivera-Letelier has characterized when a type~II point lies in the strong interior of the ramification locus:

\begin{prop}[{\cite[Prop.~10.2]{Rivera-Letelier_Periodic_Points_2005}}]
\label{Prop: R-L interior}
	Let $\varphi \in k(z)$ be a nonconstant rational function and let $x \in \Berk$ be a type~II point. Then $\varphi$ has inseparable reduction at $x$ if and only if there exists a strong neighborhood $V$ of $x$ such that $m_\varphi(y) \geq p$ for each $y \in V$. 
\end{prop}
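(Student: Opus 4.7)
The plan is to prove each direction by analyzing the reduction of $\varphi$ after appropriate coordinate changes. For the easy direction $(\Leftarrow)$, I argue the contrapositive. After normalizing so that $x = \zeta_{0,1}$ and $\varphi(x) = \zeta_{0,1}$, suppose $\tilde\varphi \in \tilde k(z)$ is separable; by the Hurwitz Formula, $\tilde\varphi$ has only finitely many critical points, so there exists $\alpha \in \PP^1(\tilde k)$ with $m_{\tilde\varphi}(\alpha) = 1$. The Algebraic Reduction Formula gives $m_\varphi(\zeta_{0,1}, \vec\alpha) = 1$, and Proposition~\ref{Prop: Directional Definition} produces a nonempty open segment from $\zeta_{0,1}$ in direction $\vec\alpha$ on which $m_\varphi \equiv 1$. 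Since $\rho$-convergence within a single descending tangent direction is equivalent to strong convergence at a type~II point, this segment contains points with $m_\varphi = 1 < p$ at arbitrarily small strong distance from $x$, so no strong neighborhood of $x$ satisfies $m_\varphi \geq p$.

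For the forward direction $(\Rightarrow)$, normalize so that $\tilde\varphi(z) = \tilde\psi(z^p)$ for some $\tilde\psi \in \tilde k(z)$. I claim there is a uniform radius $r^\ast < 1$ such that for every $a \in k^\circ$ whose reduction is not a pole of $\tilde\varphi$, and every $r \in |k^\times| \cap (r^\ast, 1)$, one has $m_\varphi(\zeta_{a,r}) \geq p$. Consider the Taylor expansion $\varphi(a+u) - \varphi(a) = \sum_{k \geq 1} c_k(a)\,u^k$ with coefficients in $k^\circ$; the key observation is that $|c_k(a)| < 1$ whenever $p \nmid k$, because $\widetilde{\varphi(z+a)}(\tilde z) = \tilde\psi(\tilde z^p + \tilde a^p)$ is a function of $\tilde z^p$. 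The binomial expansion $(a+u)^p = a^p + u^p + (\text{terms of absolute value} \leq |p|)$ combined with the $k^\circ$-integrality of the normalized form of $\varphi$ yields a bound $|c_k(a)| \leq \mu$ for some $\mu < 1$ depending only on $\varphi$, uniformly over admissible $a$ and over $k$ with $p \nmid k$. After rescaling $u \mapsto r u$ and normalizing to obtain a rational function $\eta_{a,r}(u)$ fixing the Gauss point, for $r$ sufficiently close to $1$ the dominant terms in the expansion are exactly those with $p \mid k$, so that the reduction $\tilde\eta_{a,r}$ is a nontrivial function of $\tilde u^p$ --- in particular inseparable. The Algebraic Reduction Formula then gives $m_\varphi(\zeta_{a,r}) = \deg(\tilde\eta_{a,r}) \geq p$.

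This handles the type~II points with $|a| \leq 1$ and $r \in (r^\ast, 1)$, modulo a separate treatment of the (finitely many) directions at which $\tilde a$ is a pole of $\tilde\varphi$; these are covered by the same argument after a preliminary change $w \mapsto 1/w$ on the target, which has good reduction. The ascending portion $\zeta_{0,r}$ with $r \in [1, 1+\epsilon)$ of the strong $\epsilon$-ball is handled symmetrically via the coordinate change $z \mapsto 1/z$. For points of type~III or~IV in the ball, one may extend scalars to a spherically closed extension $K/k$ using Proposition~\ref{Prop: Huge Extension}, apply the result in $\Berk_K$ (where every point is of type~I or~II), and descend via the strong isometry $\iota_k^K$ of Theorem~\ref{Thm: Base Change} together with Corollary~\ref{Cor: Compatible Multiplicities}. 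The main obstacle throughout is the uniform coefficient bound $|c_k(a)| \leq \mu$: a direct pointwise application of Lemma~\ref{Lem: Inseparable Nearby} in each tangent direction at $\zeta_{0,1}$ produces segments of inseparable reduction, but with direction-dependent depths, and extracting uniformity across all of $\PP^1(\tilde k)$ requires tracking explicitly how inseparability of the reduction is preserved under translation of the source by an arbitrary element of $k^\circ$.
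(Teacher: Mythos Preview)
The paper does not actually prove this proposition: it is quoted as \cite[Prop.~10.2]{Rivera-Letelier_Periodic_Points_2005} and only accompanied by a remark that Rivera-Letelier's argument over $\CC_p$ goes through for arbitrary $k$. So there is no in-paper proof to compare against; what you have written is an attempted reconstruction of Rivera-Letelier's result.

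Your contrapositive argument for the $(\Leftarrow)$ direction is fine. For $(\Rightarrow)$, the overall strategy --- Taylor-expand $\varphi$ about a variable center $a$, show the coefficients $c_k(a)$ with $p\nmid k$ are uniformly small, and conclude that the reduction at $\zeta_{a,r}$ is a nontrivial function of $u^p$ --- is the right one, and your reduction of type~III/IV points to type~II via Theorem~\ref{Thm: Base Change} is sound. Two points deserve more care. First, the uniform bound $|c_k(a)|\le\mu<1$ is asserted rather than proved; it does hold, but the clean reason is that $c_k(a)\,g(a)^{k+1}\in k^\circ[a]$ reduces to a polynomial computing the $k$th Hasse derivative of $\tilde\varphi$, which is identically zero when $p\nmid k$, so all coefficients of $c_k(a)\,g(a)^{k+1}$ lie in $k^{\circ\circ}$. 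Your appeal to the binomial identity alone does not give this for a general rational $\varphi$.

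Second, and more substantively, your handling of the finitely many ``bad'' residue classes is incomplete. The change $w\mapsto 1/w$ on the target converts a pole of $\tilde\varphi$ into a finite value, but it does \emph{not} rescue the residue classes where $\gcd(\tilde f,\tilde g)$ vanishes: there both $\tilde f(\tilde a_0)=\tilde g(\tilde a_0)=0$, and any element of $\PGL_2(k^\circ)$ applied on the target still leaves the new denominator vanishing at $\tilde a_0$, so your Taylor-expansion argument (which needs $|g(a)|=1$) does not apply. The strong $\epsilon$-ball meets each such direction in a genuine ``cone'' $\{\zeta_{a,r}:\tilde a=\tilde a_0,\ r>q_k^{-\epsilon}\}$, not merely a segment, so Lemma~\ref{Lem: Inseparable Nearby} alone is insufficient. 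One fix is to use that lemma to move to a nearby type~II point $\zeta_{a_0,s}$ where the reduction is $cz^m$ (hence has no surplus directions), and run your uniform argument there with explicit control of the constants coming from~\eqref{Eq: Explicit Coefficients}; but this step needs to be carried out, not waved at.
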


\begin{remark}
	While the result in \cite{Rivera-Letelier_Periodic_Points_2005} is stated over $\CC_p$, the proof is valid for an arbitrary non-Archimedean field (with residue characteristic $p > 0$). Note that the statement is vacuous if $\mathrm{char}(\tilde k) = 0$ (Corollary~\ref{Cor: Large Res Char is Tame}).
\end{remark}

\begin{cor}
\label{Cor: Endpoints and Interior Points}
	Let $\varphi \in k(z)$ be a nonconstant rational function. 
	If $Y$ is a connected component of $\Ram_\varphi \smallsetminus \Hull(\Crit(\varphi))$, 
	then each point of $\overline{Y}$ is either a strong interior point of $\Ram_\varphi$ 
	or an endpoint of $\Ram_\varphi$.  
\end{cor}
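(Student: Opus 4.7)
The plan is to identify $Y$ with $U \cap \Ram_\varphi$ for an appropriate open Berkovich disk $U$, then analyze each point of $\overline{Y}$ by type. Since $Y$ is connected and disjoint from $\Hull(\Crit(\varphi))$, it lies inside a single connected component $U$ of $\Berk \smallsetminus \Hull(\Crit(\varphi))$. I would first observe that every such component is an open Berkovich disk whose boundary point $x_0$ is of type~II: type~I and type~IV points of the hull have only one tangent direction (necessarily pointing into the remainder of the hull), and at a type~III point interior to a hull segment both tangent directions are already consumed by the segment. Proposition~\ref{Prop: Inseparable Boundary} then applies to give that $\varphi$ has inseparable reduction at $x_0$ (so in particular $x_0 \in \Ram_\varphi$) and that $U \cap \Ram_\varphi$ is connected. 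Maximality of $Y$ forces $Y = U \cap \Ram_\varphi$, and consequently $\overline{Y} \subset Y \cup \{x_0\}$.

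For $x = x_0$ I invoke Proposition~\ref{Prop: R-L interior}: since $\varphi$ has inseparable reduction at the type~II point $x_0$, some strong neighborhood of $x_0$ satisfies $m_\varphi \geq p \geq 2$ throughout, placing $x_0$ in the strong interior of $\Ram_\varphi$. For $x \in Y$ I split by type. Type~I is impossible because the type~I points of $\Ram_\varphi$ are precisely the critical points and $U \cap \Crit(\varphi) = \emptyset$. For type~IV the tangent space $T_x$ is a singleton, so $x$ is automatically an endpoint of $\Ram_\varphi$. For type~III, Proposition~\ref{Prop: Expansion Properties} gives $m_\varphi(x, \vec v_1) = m_\varphi(x, \vec v_2) = m_\varphi(x) > 1$ in the two tangent directions; applying Proposition~\ref{Prop: Directional Definition} in each direction produces subsegments of $\Ram_\varphi$ emanating from $x$ on both sides, and their union is an open segment through $x$ containing a basic $\rho$-ball at $x$, so $x$ lies in the strong interior.

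The main obstacle is the type~II case, which I expect to settle by combining Lemma~\ref{Lem: First Mapping Lemma} with Proposition~\ref{Prop: R-L interior}. Assume $x \in Y$ is of type~II and not an endpoint of $\Ram_\varphi$; then at least two tangent directions $\vec v \in T_x$ satisfy $m_\varphi(x, \vec v) > 1$. At most one of these can point towards $x_0$, so we may choose a direction $\vec v$ with $m_\varphi(x, \vec v) > 1$ and $\BB_x(\vec v)^- \subset U$. This disk contains no critical point; combining Proposition~\ref{Prop: Rivera-Letelier Mapping} with the absence of critical points in $\BB_x(\vec v)^-$, every classical target in $\BB_{\varphi(x)}(\varphi_*(\vec v))^-$ acquires at least two distinct classical preimages in $\BB_x(\vec v)^-$, so $\varphi$ fails to be injective on $\BB_x(\vec v)^- \cap \PP^1(k)$. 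Choosing $\sigma_1 \in \PGL_2(k)$ with $\sigma_1(\zeta_{0,1}) = x$ and $(\sigma_1)_*(\vec 0) = \vec v$ and applying Lemma~\ref{Lem: First Mapping Lemma} to $\varphi \circ \sigma_1$ (after a further coordinate change on the target), we obtain that $\varphi$ has inseparable reduction at $x$. A final appeal to Proposition~\ref{Prop: R-L interior} then places $x$ in the strong interior of $\Ram_\varphi$, completing the argument.
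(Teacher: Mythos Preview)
Your overall architecture (identify $Y=U\cap\Ram_\varphi$, treat the boundary point $x_0$ separately, then case on the type of $x\in Y$) is sound, and your type~II argument is essentially the paper's own --- you have simply inlined the proof of Proposition~\ref{Prop: Inseparable Boundary} via Lemma~\ref{Lem: First Mapping Lemma} rather than citing the proposition directly.

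The real problem is your type~III case. You assert that the union of the two segments produced by Proposition~\ref{Prop: Directional Definition} ``contains a basic $\rho$-ball at $x$,'' but this is false. At a type~III point $x=\zeta_{a,r}$, the value group $|k^\times|$ is dense in $\RR_{>0}$ (since $k$ is algebraically closed), so there are type~II points $\zeta_{a,r_0}$ with $r_0\in|k^\times|$ arbitrarily close to $x$, and the tree branches at each of them. Concretely, choose $r_0\in|k^\times|$ with $rq_k^{-\varepsilon}<r_0<r$, pick $b\in k$ with $|a-b|=r_0$, and take any $s$ with $rq_k^{-\varepsilon}<s<r_0$. Then $\zeta_{b,s}\prec x$ and $\rho(x,\zeta_{b,s})=\log_{q_k}(r/s)<\varepsilon$, yet $\zeta_{b,s}\neq\zeta_{a,s}$ since $|a-b|=r_0>s$, so this point lies off your segment. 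Hence a one-dimensional segment never contains a strong neighborhood of a type~III point, and you have no way to conclude that such nearby off-segment points lie in $\Ram_\varphi$. The paper avoids this by base-changing to a field $K$ with $r\in|K^\times|$, so that $\iota_k^K(x)$ becomes type~II; then the type~II argument (inseparable reduction via Proposition~\ref{Prop: Inseparable Boundary}, strong interior via Proposition~\ref{Prop: R-L interior}) applies over $K$, and the strong neighborhood pulls back under the isometric embedding $\iota_k^K$.

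A minor secondary gap: your claim that the boundary point $x_0$ of $U$ is always of type~II fails when $k$ has positive characteristic and $\varphi$ has a single critical point (e.g.\ $\varphi(z)=z^p+z$), in which case $\Hull(\Crit(\varphi))$ is a single type~I point. The conclusion still holds there trivially ($x_0$ is then an endpoint), but your argument does not cover it, and Proposition~\ref{Prop: Inseparable Boundary} cannot be invoked since its hypothesis requires a type~II boundary.
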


\begin{remark}
	As a subspace of $\Ram_\varphi$, the unique relative boundary point of $Y$ will be of type~II in general. However, if $k$ has positive characteristic~$p$, then it is possible for $\varphi$ to have a single critical point (counted without weight), in which case $\Hull(\Crit(\varphi)) = \partial Y$ consists of a single point of type~I. The statement of the corollary applies in either case. 
\end{remark}

\begin{proof}
	Suppose that $y \in \overline{Y}$. If $y$ is of type~I or~IV, it is an endpoint of $\Berk$, and hence also of $\Ram_\varphi$.  If $y$ is of type~II, define $S \subset T_y$ to be the set of tangent directions $\vec{v}$ such that $m_\varphi(y, \vec{v}) > 1$. Then $S$ is nonempty since $\Ram_\varphi$ has no isolated point. If $\#S = 1$, then $y$ is an endpoint. Otherwise, $\#S \geq 2$, and there exists an open Berkovich disk $U$ disjoint from $\Hull(\Crit(\varphi))$ with boundary point $y$ such that $U \cap \Ram_\varphi \neq \emptyset$. Thus $\varphi$ has inseparable reduction at $y$ (Proposition~\ref{Prop: Inseparable Boundary}), and so $y$ is a strong interior point of $\Ram_\varphi$ by the above proposition.
	
	If $y = \zeta_{a,r}$ is of type~III, then we will show it is an interior point of $\Ram_\varphi$. Let $K / k$ be an extension of algebraically closed and complete non-Archimedean fields such that $r \in |K^\times|$, and write $y_K = \iota_k^K(y)$. Then $y_K$ is a type~II point of $\Berk_K$ that lies off of the connected hull of the critical points of $\varphi_K$.  A type~III point can never be an endpoint of the ramification locus; it follows that $y_K$ is not an endpoint of $\Ram_{\varphi_K}$ (Proposition~\ref{Cor: Compatible Multiplicities}). The argument in the previous paragraph applied to $y_K$ and $\varphi_K$ shows that $y_K$ is a strong interior point of $\Ram_{\varphi_K}$. If $V \subset \Ram_{\varphi_K}$ is a strong open neighborhood of $y_K$, then $(\iota_k^K)^{-1}(V) \subset \Ram_\varphi$ is a strong open neighborhood of $y$ (Theorem~\ref{Thm: Base Change}).
\end{proof}

\begin{lem}
\label{Lem: Cone Lemma}
	Suppose $k$ has positive residue characteristic. Let $\varphi \in k(z)$ be such that $s_\varphi(\zeta_{0,1}, \vec{v}) = 0$ for all $\vec{v} \neq \vec{\infty}$, and suppose further that $\widetilde{\varphi}(z) = h(z^p) + cz$ for some nonconstant polynomial $h \in \tilde k[z]$ and some nonzero $c$. Fix $\delta >0$. Then there exists $\varepsilon > 0$ such that $\zeta_{B, |A|} \not\in \Ram_\varphi$ for any $A, B \in k$ satisfying
	\[
		0 < |A| < q_k^{- \delta} \quad \text{and} \quad 1 < |B| < q_k^{\varepsilon}.
	\]
\end{lem}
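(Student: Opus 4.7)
The plan is to change coordinates via $\sigma(z) = Az + B$ on the source and $\tau(w) = (w - \varphi(B))/A$ on the target, set $\psi := \tau \circ \varphi \circ \sigma$, and show that $\psi$ fixes the Gauss point with reduction $\widetilde{\psi}$ of degree~$1$; the Algebraic Reduction Formula then gives $m_\varphi(\zeta_{B,|A|}) = m_\psi(\zeta_{0,1}) = 1$, as desired.

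\emph{Setup.} Since $\widetilde{\varphi} = h(z^p) + cz$ is a polynomial and the surplus vanishes in every direction $\vec{v} \neq \vec{\infty}$, Lemma~\ref{Lem: Surplus Multiplicity} forces $\tilde g$ to be a nonzero constant; after rescaling I would write $\varphi = f/g$ with $f = H(z^p) + Cz + \eta$ and $g = 1 + g_1$, where $H, C \in k^\circ$ lift $h, c$ and $\eta, g_1 \in k^{\circ\circ}[z]$. Let $e = \deg h$, so $m_\varphi(\zeta_{0,1}) = ep \geq 2$. The same computation that underpins Lemma~\ref{Lem: Inseparable reduction} also yields $\widetilde{\wronsk_\varphi} = c \neq 0$, which will be the key ingredient.

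\emph{Choice of $\varepsilon$.} I would pick $\varepsilon > 0$ so small that for every $B$ with $|B| \in (1, q_k^\varepsilon)$:
(i) $|g_1(\zeta_{0,|B|})| < 1$, so $|g(w)| = 1$ for all $w \in D(B,|A|)$ and $\varphi$ is regular there;
(ii) $|\eta(\zeta_{0,|B|})| < |B|^{ep}$, so $|f(\zeta_{0,|B|})| = |B|^{ep}$;
(iii) $|\wronsk_\varphi(B) - C| < 1$, hence $|\wronsk_\varphi(B)| = 1$;
(iv) $\varepsilon(ep - 2) < \delta$ (vacuous when $ep = 2$).
Each of (i)--(iii) is a Newton-type condition of the form $|B| < \min_j |\gamma_j|^{-1/j}$ on the coefficients $\gamma_j$ of $g_1$, of $\eta$ (for indices $j > ep$), and of $\wronsk_\varphi - C$, respectively; so all three hold for any sufficiently small $\varepsilon$ depending only on $\varphi$. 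Condition~(iv) simply imposes $\varepsilon < \delta/(ep - 2)$.

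\emph{Computation and main obstacle.} Upon rationalizing, $\psi(z) = N(z)/[A \cdot g(Az+B) \cdot g(B)]$ with $N(z) := f(Az+B)\, g(B) - f(B)\, g(Az+B)$. By (i), the denominator has Gauss norm $|A|$ and reduction~$1$. Expanding $N$ in powers of $z$ using Hasse derivatives, $N(z) = \sum_{k \geq 0} A^k [D_k f(B)\, g(B) - f(B)\, D_k g(B)]\, z^k$: the $k = 0$ term is identically zero; the $k = 1$ term equals $A \cdot \wronsk_\varphi(B)$, of absolute value exactly $|A|$ by~(iii); and for $k \geq 2$, the non-Archimedean Cauchy bounds $|D_k f(B)| \leq |B|^{ep - k}$ and $|D_k g(B)| \leq |B|^{-k}$ (which follow from (i)--(ii)) give each $z^k$ coefficient an absolute value of at most $|A|^k |B|^{ep - k} \leq |A|^2 |B|^{ep - 2} < |A|$ by~(iv). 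Thus $\|N\|_{\mathrm{Gauss}} = |A|$, $\|\psi\|_{\mathrm{Gauss}} = 1$, and $\widetilde{\psi}(z) = \tilde c \cdot z$ has degree~$1$. The main subtlety is that $|f'(B) g(B)|$ and $|f(B) g'(B)|$ individually can be as large as $|B|^{ep - 1}$, so $|\wronsk_\varphi(B)| = 1$ cannot be obtained by the triangle inequality applied to $f'g - fg'$ termwise; one must instead exploit the precise reduction $\widetilde{\wronsk_\varphi} = c$ established in the setup.
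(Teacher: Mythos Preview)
Your argument is correct and uses the same change of coordinates $\psi(z)=A^{-1}\bigl(\varphi(Az+B)-\varphi(B)\bigr)$ and the same endgame via the Algebraic Reduction Formula as the paper. The computation of $\widetilde{\psi}$, however, is organized differently. The paper splits $\psi$ into the two fractions
\[
\frac{A^{-1}\bigl[f(Az+B)-f(B)\bigr]}{g(Az+B)}
\ +\
\frac{A^{-1}f(B)\bigl[g(B)-g(Az+B)\bigr]}{g(B)\,g(Az+B)},
\]
and then analyzes the numerator of the first fraction monomial by monomial: for each $j$ with $p\mid j$ and $|a_j|=1$, the linear coefficient $j\,a_jB^{j-1}$ is small because $|j|\le|p|<1$, while the higher-order terms carry an explicit factor of $A$. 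Your route instead rationalizes to a single numerator $N(z)$, identifies the $z^1$-coefficient as $A\,\wronsk_\varphi(B)$, and invokes $\widetilde{\wronsk_\varphi}=c\neq 0$ (via Lemma~\ref{Lem: Inseparable reduction}) to get $|\wronsk_\varphi(B)|=1$, with Cauchy-type bounds $|D_kf(B)|\le|B|^{ep-k}$ handling the terms $k\ge 2$. This packages exactly the same cancellation (the vanishing of $\bigl(h(z^p)\bigr)'$ in residue characteristic $p$) into the single identity $\widetilde{\wronsk_\varphi}=c$, which makes the mechanism more transparent and ties the lemma directly to the Wronskian characterization of separable reduction. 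The paper's version is more self-contained; yours is shorter and more conceptual. Your quantitative condition $\varepsilon(ep-2)<\delta$ plays the same role as the paper's requirement that $|A|(1+\beta)<1$ with $\beta\to 0$ as $|B|\to 1$.
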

	
\begin{proof} Let $A, B \in k$ satisfy $0 < |A| < q_k^{-\delta}$ and $|B| > 1$. 
	Set $\psi(z) = A^{-1}\left[ \varphi(Az+B) - \varphi(B)\right]$. If $\varphi(z) = f(z) / g(z)$, then 
	\be
	\label{Eq: Two Terms}
		\psi(z) = \frac{A^{-1}[f(Az+B) - f(B)]}{g(Az+B)} 
			+ \frac{A^{-1}f(B)\left[g(B) - g(Az+B)\right]}{g(B)g(Az+B)}.
	\ee
We will show that the first term above reduces to a linear polynomial in $\tilde k[z]$, and that the second vanishes modulo $k^{\circ \circ}$, provided that $|B|$ is sufficiently close to~1. The Algebraic Reduction Formula then implies $m_\psi(\zeta_{0,1}) = 1 = m_\varphi(\zeta_{B, |A|})$, so that $\zeta_{B, |A|}$ is not in the ramification locus. 

	Write $\varphi$ in normalized form as
	\[
		\varphi(z) = \frac{a_dz^d + \cdots + a_0}{b_dz^d + \cdots +b_0}= \frac{f(z)}{g(z)}.
	\] 
Let $D$ be the degree of the polynomial $h$ in the statement of the lemma. The hypotheses on the surplus multiplicity and on the reduction of $\varphi$ are equivalent to saying $|b_j| < 1$ for $j = 1, \ldots, d$, that $|a_i| < 1$ for $i > Dp$, that $|a_i| < 1$ for $1<i < Dp$ such that $p \nmid i$, and that $|b_0| = 1 = |a_1| = |a_{Dp}|$. 
	
	In the remainder of the proof, we write $\beta$ for any positive real function that tends to zero as $|B| \to 1$, independently of $A$. Note also that if $|A| < q_k^{-\delta}$, then $A$ is uniformly bounded away from~1. Consider the quantity
	\benn
		X_j := A^{-1}\left[ a_j(Az+B)^j - a_jB^j \right] = a_j \sum_{1 \leq i \leq j} \binom{j}{i} A^{i-1}B^{j-i}z^i.
	\eenn
We will show that $\tilde X_j = 0$ for $j \neq 1$ provided $|B|$ is sufficiently close to~1. If $j > Dp$ and $|B|$ is sufficiently close to~1, then $|a_j| < 1$ implies every coefficient of $X_j$ is bounded by $|a_j|(1+ \beta) < 1$. If $1 < j < Dp$ and $p \nmid j$, then each coefficient of $X_j$ is bounded by $|a_j|(1+ \beta) < 1$ for the same reason. If $1 < j \leq Dp$ and $p \mid j$, then 
	\[
		X_j = j a_jB^{j - 1}z + a_jA \sum_{2 \leq i \leq j} \binom{j}{i} A^{i-2}B^{j-i}z^i.
	\]
The linear coefficient has absolute value bounded by $|p|(1+ \beta) < 1$ since $p \mid j$, and the remaining coefficients are bounded by $|A|(1+\beta) < q_k^{-\delta}(1+\beta)$. The remaining cases $j = 0$ and $j = 1$ are treated by observing that $X_0 = 0$ and $X_1 = a_1z$. 

	Next observe that 
	\[
		g(Az+B) - b_0 = \sum_{1 \leq j \leq d} b_j(Az+B)^j.
	\]
Since $|b_j| < 1$ for all $j > 0$, we see that $\widetilde{g(Az+B)} = \tilde b_0$ provided $|B|$ is sufficiently close to~1. 
Hence 
	\benn
		\frac{A^{-1}[f(Az+B) - f(B)]}{g(Az+B)} =
		 \frac{\sum_{0 \leq j \leq d}  X_j}{g(Az+B)} 
		\equiv \frac{ a_1}{ b_0} z \pmod{k^{\circ \circ}}. 
	\eenn
Thus the first term in \eqref{Eq: Two Terms} has the desired reduction.
	
	For the second term in~\eqref{Eq: Two Terms}, we observe that $g(Az+B) = g(B) + A \cdot E(z)$, where $E \in k^{\circ \circ}$ is a polynomial whose coefficients are bounded by $(1+\beta)\max \{|b_j| : j > 0\}$. Note also that $|f(B)| \leq 1+ \beta$. Since $\widetilde{g(B)} = \tilde b_0$, it follows that 
	\[
		\frac{A^{-1}f(B)\left[g(B) - g(Az+B)\right]}{g(B)g(Az+B)} =
			\frac{- f(B)E(z)}{g(B)\left[g(B) + A \cdot E(z) \right]}\equiv 0 \pmod {k^{\circ\circ}}.
	\]
We have now show that the second term in~\eqref{Eq: Two Terms} has the desired reduction when $|B|$ is sufficiently close to~1, which completes the proof.
\end{proof}
	
\begin{prop}
\label{Prop: Interior Point}
	Let $\varphi \in k(z)$ be a nonconstant rational function, and let $x \in \Berk$. Then $\varphi$ has inseparable reduction at $x$ if and only if $x$ is an interior point of $\Ram_\varphi$ for the strong topology.
\end{prop}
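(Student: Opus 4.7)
The plan is to prove both implications by a case analysis on the type of $x$, leaning on Rivera-Letelier's characterization at type~II points (Proposition~\ref{Prop: R-L interior}) and reducing all other cases to it.

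For type~I points the argument is direct. Here $\varphi$ has inseparable reduction at $x$ by \emph{definition} iff $\varphi$ itself is inseparable. If it is, Proposition~\ref{Prop: Frobenius} gives $\Ram_\varphi = \Berk$, so every point is a strong interior point. Conversely, if a type~I point $x$ is a strong interior point of $\Ram_\varphi$, then any strong neighborhood of $x$ contains infinitely many classical points (one can use the classical disk $D(x,\epsilon)$), every one of which is a critical point of $\varphi$. By the Hurwitz Formula (Proposition~\ref{Prop: Hurwitz}), infinitely many critical points forces $\varphi$ to be inseparable.

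For type~II $x$, the $(\Rightarrow)$ direction is immediate from Proposition~\ref{Prop: R-L interior}: inseparable reduction yields a strong neighborhood on which $m_\varphi \geq p \geq 2$, hence contained in $\Ram_\varphi$. For $(\Leftarrow)$ I would argue the contrapositive without using the Cone Lemma at all: normalize so $x = \zeta_{0,1} = \varphi(\zeta_{0,1})$; then $\tilde\varphi \in \tilde k(z)$ is separable, and the Hurwitz formula for $\tilde\varphi$ guarantees infinitely many $a \in \tilde k$ with $m_{\tilde\varphi}(a) = 1$. By the Algebraic Reduction Formula, the corresponding tangent direction $\vec v_a$ satisfies $m_\varphi(\zeta_{0,1}, \vec v_a) = 1$, and Proposition~\ref{Prop: Directional Definition}\eqref{Item: Locally constant} provides a segment $(\zeta_{0,1}, x') \subset \BB_{\zeta_{0,1}}(\vec v_a)^-$ with $m_\varphi \equiv 1$. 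Points $\zeta_{b,s}$ on this segment with $s$ just below $1$ satisfy $d(\zeta_{0,1},\zeta_{b,s}) = 1-s$ and hence are strong-close to $x$ but not in $\Ram_\varphi$.

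For type~III and type~IV $x$, I would extend scalars. Choose an algebraically closed, complete extension $K/k$ by Proposition~\ref{Prop: Huge Extension} so that $x_K = \iota_k^K(x)$ is of type~II in $\Berk_K$. For $(\Rightarrow)$: inseparable reduction at $x$ is defined as inseparable reduction of $\varphi_K$ at $x_K$, so the type~II case gives a strong neighborhood $V_K \subset \Ram_{\varphi_K}$ around $x_K$; pulling back via $\iota$ and using that $\iota$ is a $\rho$-isometry on $\HH_k$ (Theorem~\ref{Thm: Base Change}) together with the multiplicity compatibility in Corollary~\ref{Cor: Compatible Multiplicities} shows that $\iota^{-1}(V_K)$ is a strong neighborhood of $x$ contained in $\Ram_\varphi$. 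For $(\Leftarrow)$, argue contrapositive: separable reduction at $x$ means separable reduction of $\varphi_K$ at $x_K$, so by the type~II case there exist points in $\Berk_K$ strong-close to $x_K$ outside $\Ram_{\varphi_K}$; the task is to find such points in the image $\iota(\Berk_k)$, which then pull back to $\Berk_k$.

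The hard part is precisely this last step. After normalizing coordinates in $\Berk_K$ so that $x_K$ becomes the Gauss point, the two tangent directions at $x_K$ visible from $\iota(\Berk_k)$ (namely $\iota_*(\vec 0)$ and $\iota_*(\vec\infty)$ for $x$ of type~III, or the single direction $\iota_*(\vec v)$ for $x$ of type~IV) turn out to carry the entire multiplicity $m = m_\varphi(x)$ of $\tilde\psi$, so the ``easy'' non-critical tangent directions used in the type~II case all lie \emph{outside} $\iota(\Berk_k)$. To produce non-ramified strong-close points inside $\iota(\Berk_k)$ one must look deeper along a visible direction, and this is exactly what Lemma~\ref{Lem: Cone Lemma} is designed for: it produces points $\zeta_{B,|A|}$ outside $\Ram_\varphi$ with $A,B$ in the base field. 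After arranging the normalization so $B$ can be taken in $k$ and $|A|$ in $|k^\times|$ (which requires some care since the coordinate change putting $x_K$ at the Gauss point lies in $\PGL_2(K)\setminus\PGL_2(k)$), the Cone Lemma output lies in $\iota(\Berk_k)$, and pulling back via $\iota$ produces the desired points in $\Berk_k$ strong-close to $x$ with multiplicity one.
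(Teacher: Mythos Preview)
Your architecture largely matches the paper's: type~I is handled directly, type~II reduces to Proposition~\ref{Prop: R-L interior} (which is a biconditional, so your explicit contrapositive is fine but unnecessary), and the $(\Rightarrow)$ direction for types~III and~IV goes through extension of scalars exactly as in the paper. For type~IV $(\Leftarrow)$, your contrapositive is logically equivalent to the paper's direct argument, and both must invoke Lemma~\ref{Lem: Cone Lemma} to rule out the possibility that $x_K$ is an endpoint of $\Ram_{\varphi_K}$; the delicate step of locating a suitable $\zeta_{B,|A|}$ inside $\sigma_1^{-1}\bigl(\iota(\Berk_k)\bigr)$ is exactly the issue you flag, and the paper resolves it by arguing that $\sigma_1^{-1}\bigl(\iota(\BB_x(\vec v)^-)\bigr)$ is a connected subset of $\BB_{\zeta_{K,0,1}}(\vec\infty)^-$ sharing its boundary point.

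There is, however, a genuine gap in your type~III $(\Leftarrow)$ step. If $x$ is type~III with separable reduction, Proposition~\ref{Prop: Type III} gives $p \nmid m := m_\varphi(x)$, and the proof of that proposition shows the reduced map at $x_K$ is $\tilde\psi(z) = az^m$. This has \emph{two} critical points $0,\infty$ (corresponding to your two visible directions $\iota_*(\vec v_1), \iota_*(\vec v_2)$), so $x_K$ is not an endpoint of $\Ram_{\varphi_K}$ and $\tilde\psi$ is not of the form $h(z^p)+cz$ required by Lemma~\ref{Lem: Cone Lemma}. The Cone Lemma simply does not apply at $x_K$, nor at any nearby point along the two visible directions (where the directional multiplicity remains $m$ in both directions). ``Looking deeper'' along a visible direction until you reach a branching point of $\iota(\Berk_k)$ means reaching $\iota(y)$ for some type~II point $y\in\Berk_k$; but then you must analyze the reduction of $\varphi$ at $y$ in $\Berk_k$, and you are back to the type~II case over $k$ rather than the Cone Lemma over $K$.

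The paper handles type~III much more simply, without extending scalars: if $x$ is a strong interior point, then any sufficiently close type~II point $y$ on the segment through $x$ is also strong interior, so $\varphi$ has inseparable reduction at $y$ by Proposition~\ref{Prop: R-L interior}, hence $p \mid m_\varphi(y) = m_\varphi(x)$, hence $\varphi$ has inseparable reduction at $x$ by Proposition~\ref{Prop: Type III}. This two-line argument replaces your proposed use of the Cone Lemma for type~III entirely.
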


\begin{proof}
	First, suppose $x$ is of type~I. By definition, the function $\varphi$ has inseparable reduction at $x$ if and only if $\varphi$ is itself inseparable. In the case that $\varphi$ is inseparable, we have $\Ram_\varphi = \Berk$ (Proposition~\ref{Prop: Frobenius}), so that every classical point is a strong interior point. If $\varphi$ is separable, we must show that $x$ fails to be a strong interior point. A strong open neighborhood of $x$ contains infinitely many type~I points. But the type~I points of $\Ram_\varphi$ are precisely the critical points, of which $\varphi$ has only finitely many. So $x$ cannot be a strong interior point. 

	Now we suppose that $x$ is of type~II,~III, or~IV, and that $\varphi$ has inseparable reduction at $x$. Let $K / k$ be an extension of algebraically closed and complete non-Archimedean fields such that $\Berk_K$ has only type~I and type~II points (Proposition~\ref{Prop: Huge Extension}).  Write $\iota = \iota_k^K$. Then $\iota(x)$ is a type~II point, and Proposition~\ref{Prop: R-L interior} shows that $\varphi_K$ has inseparable reduction at $\iota(x)$ if and only if there exists a strong open neighborhood $V$ of $\iota(x)$ contained inside $\Ram_{\varphi_K}$. By shrinking $V$ if necessary, we may assume it contains no type~I point. Set $U = \iota^{-1}(V)$. Theorem~\ref{Thm: Base Change} and its corollary show that $U \subset \Ram_\varphi$ is a strong open neighborhood of $x$. That is, $x$ is a strong interior point of $\Ram_\varphi$.
	
	For the reverse implication, we assume that $x \in \Berk$ is a strong interior point of $\Ram_\varphi$ and show that $\varphi$ has inseparable reduction at $x$. This is clear by Proposition~\ref{Prop: R-L interior} if $x$ is of type~II. Suppose $x$ is of type~III. The multiplicity $m_\varphi(y)$ is constant with value $m = m_\varphi(x)$ for all type~II points $y$ lying on some segment beginning at $x$  (Propositions~\ref{Prop: Directional Definition} and~\ref{Prop: Expansion Properties}). Now each such point $y$ that is sufficiently close to $x$ in the strong topology must lie in the strong interior of $\Ram_\varphi$. So $\varphi$ has inseparable reduction at $y$; hence $p \mid m_\varphi(y) = m_\varphi(x)$; hence $\varphi$ has inseparable reduction at $x$ (Proposition~\ref{Prop: Type III}). 
	
	Finally, suppose $x$ is a type~IV point in the strong interior of $\Ram_\varphi$. Note that $x$ does not lie on the connected hull of the critical points of $\varphi$. Let $K / k$ be an extension of non-Archimedean fields as in the second paragraph. In particular, $x_K = \iota(x)$ is a type~II point, so it must be either an endpoint or a strong interior point of $\Ram_{\varphi_K}$ (Corollary~\ref{Cor: Endpoints and Interior Points}). In the latter case, $\varphi_K$ has inseparable reduction at $x_K$ (Proposition~\ref{Prop: R-L interior}), and so $\varphi$ has inseparable reduction at $x$ (by definition). 
	
	It remains to show that $x_K$ cannot be an endpoint of the ramification locus of $\varphi_K$. Suppose to the contrary that it is an endpoint. Let $\vec{v} \in T_{x_K}$ be the unique tangent direction such that $m_{\varphi_K}(x_K, \vec{v}) > 1$. We may select $\sigma_1, \sigma_2 \in \PGL_2(K)$ so that $\sigma_1^{-1}(x_K) = \zeta_{K,0,1} = \sigma_2(\varphi_K(x_K))$, and so that $(\sigma_1)_*^{-1}(\vec{v}) = \vec{\infty} = (\sigma_2)_*( (\varphi_K)_*(\vec{v}))$. Set $\psi(z) = \sigma_2 \circ \varphi_K \circ \sigma_1$. Since $x$ is a type~IV point, $m_\varphi(x) = m_{\varphi_K}(x_K) = m_{\varphi_K}(x_K, \vec{v}) > 1$. So $\widetilde{\psi} \in K(z)$ is a rational function that fixes $\infty$, and the (algebraic) multiplicity at infinity equals the degree of $\widetilde{\psi}$. Thus $\widetilde{\psi}$ is a polynomial function. Moreover, $\widetilde{\psi}$ has no finite critical point, and so its  formal derivative must be a nonzero constant $c \in \tilde K$. We conclude that $\widetilde{\psi}(z) = h(z^p) + cz$ for some nonconstant polynomial $h \in \tilde K[z]$. Observe further that $s_\psi(\zeta_{K,0,1}, \vec{w}) = 0$ for all $\vec{w} \neq \vec{\infty}$ since $x_K$ is the image of a type~IV point in $\Berk_k$. 	We are now in a position to apply Lemma~\ref{Lem: Cone Lemma}. 
	
	Recall that we are assuming $x$ is an interior point of $\Ram_\varphi$. Let $\delta_0 > 0$ be such that the $\rho$-ball of radius $\delta_0$ about $x$ lies in $\Ram_\varphi$. Set $\delta = \delta_0 / 3$ and choose $\varepsilon > 0$ as in the lemma. Let $A, B \in K$ be such that (i) $q_k^{-2\delta} < |A| < q_k^{-\delta}$, (ii) $1 < |B| < q_k^{\min\{\varepsilon, \delta_0/6\}}$, and (iii) there exists $y \in \Berk_k$ such that $\zeta_{B, |A|} = \sigma_1^{-1}(\iota(y))$. This last condition is possible because $\sigma_1^{-1}(\iota(\BB_x(\vec{v})^-))$ is a connected subset of $\BB_{\zeta_{K,0,1}}(\vec{\infty})^-$ and shares the same boundary point. Then $y \not\in \Ram_\varphi$ by the lemma. But we also find that
	\[
		\rho(x,y) = \rho(\zeta_{K,0,1}, \zeta_{B, |A|}) =  2\log_{q_k} |B|  - \log_{q_k} |A|
			< 2\log_{q_k} |B| + 2 \delta < \delta_0.
	\]
Hence $y \in \Ram_\varphi$ by our choice of $\delta_0$. This contradiction completes the proof. 
\end{proof}

	Finally, we give a criterion to determine when a rational function is locally tame near a point~$x$ --- i.e.,  when there exists a neighborhood $U$ of $x$ such that $\Ram_\varphi \cap U$ is a finite tree. 

\begin{prop}
	Let $\varphi \in k(z)$ be a nonconstant rational function, and let  $x \in \Berk$. The ramification locus is locally tame near $x$ (for the weak or strong topology) if and only if $p \nmid m_\varphi(x, \vec{v})$ for all tangent vectors $\vec{v} \in T_x$. 
\end{prop}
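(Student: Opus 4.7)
My plan hinges on the characterization of strong-interior points of $\Ram_\varphi$ via inseparable reduction (Proposition~\ref{Prop: Interior Point}), together with the observation that any type~II point in the strong interior of $\Ram_\varphi$ is automatically a branch point of $\Ram_\varphi$ of infinite degree --- a sufficiently small $\rho$-ball around such a point meets every one of its tangent directions and is contained in $\Ram_\varphi$.

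For the forward implication I argue the contrapositive. Suppose $p \mid m_\varphi(x, \vec{v})$ for some $\vec{v} \in T_x$. After reducing to the type~II setting --- via a coordinate change when $x$ is of type~II, via base extension using Proposition~\ref{Prop: Huge Extension} and Corollary~\ref{Cor: Compatible Multiplicities} when $x$ is of type~III or~IV, or via a direct renormalization of $\varphi$ showing $\widetilde{\psi}(z) = z^m$ at every sufficiently small type~II point $\zeta_{0,r}$ in coordinates fixing $x = 0 = \varphi(x)$ when $x$ is of type~I --- Lemma~\ref{Lem: Inseparable Nearby} produces a segment at every point of which $\varphi$ has inseparable reduction. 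By Proposition~\ref{Prop: Interior Point} these are strong-interior points of $\Ram_\varphi$, and the type~II ones are infinite-degree branch points of $\Ram_\varphi$. Every neighborhood of $x$ in either topology meets this segment, so $\Ram_\varphi$ cannot intersect such a neighborhood in a finite tree.

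For the reverse implication, assume $p \nmid m_\varphi(x, \vec{v})$ for all $\vec{v} \in T_x$. Then $\varphi$ is separable: otherwise $\varphi = \psi \circ F^\ell$ with $F(z) = z^p$, and multiplicativity of directional multiplicities combined with $m_F(\cdot, \cdot) \equiv p$ would force $p \mid m_\varphi(x, \vec{v})$ for every $\vec{v}$. Hence $\varphi$ has only finitely many critical points by Hurwitz, and the set
\[
S = \{\vec{v} \in T_x : m_\varphi(x, \vec{v}) > 1 \text{ or } s_\varphi(x, \vec{v}) > 0\}
\]
is finite by Proposition~\ref{Prop: Critical surplus} (applied after base change to a type~II configuration when $x$ is not already of type~II); for $\vec{v} \notin S$, Proposition~\ref{Prop: Rivera-Letelier Mapping}(1) yields $\BB_x(\vec{v})^- \cap \Ram_\varphi = \emptyset$.

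Finally, for each $\vec{v} \in S$ I pick (via Proposition~\ref{Prop: Directional Definition}) a point $y_{\vec{v}}$ on the constant-multiplicity initial segment out of $x$ in direction $\vec{v}$, chosen strictly closer to $x$ (in the tree order) than any of the finitely many branch points of $\Ram_\varphi$ contained in $\BB_x(\vec{v})^-$ --- the finiteness coming from Proposition~\ref{Prop: 2 critical points}, since each such branch point is supported by a critical point of $\varphi$ in $\BB_x(\vec{v})^-$. Letting $\vec{w}_{\vec{v}} \in T_{y_{\vec{v}}}$ denote the tangent direction at $y_{\vec{v}}$ pointing away from $x$, the set
\[
U = \Berk \smallsetminus \bigcup_{\vec{v} \in S} \overline{\BB_{y_{\vec{v}}}(\vec{w}_{\vec{v}})^-}
\]
is a weak-open (and hence strong-open) neighborhood of $x$, and $\Ram_\varphi \cap U$ is the finite star consisting of $x$ (included only if $m_\varphi(x) > 1$) together with the half-open segments $[x, y_{\vec{v}})$ for each $\vec{v}$ with $m_\varphi(x, \vec{v}) > 1$. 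The main technical hurdle is the side-branch bookkeeping in this last step: verifying that in each direction $\vec{v} \in S$ every bit of ramification off the initial segment occurs past $y_{\vec{v}}$; this rests on the positive lower bound --- provided by finiteness of the critical points --- on the $\rho$-distance from $x$ to any $\Ram_\varphi$-branching on the initial segment.
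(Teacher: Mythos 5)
Your forward implication is sound, if somewhat more elaborate than necessary: the paper avoids any case analysis on the type of $x$ by choosing, inside an arbitrary neighborhood $U$, a type~II point $y \in \BB_x(\vec{v})^-$ with $m_\varphi(y) = m_\varphi(y,\vec{w}) = m_\varphi(x,\vec{v})$ (where $\vec{w}$ points back toward $x$), then applying Lemma~\ref{Lem: Inseparable Nearby} at $y$; this produces a point of inseparable reduction on $(x,y) \subset U$ uniformly in the type of $x$, and no base extension or separate renormalization for type~I is needed.

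In the reverse implication there is a genuine gap at exactly the spot you flag as the ``main technical hurdle.'' You must rule out branch points of $\Ram_\varphi$ accumulating at $x$ along the constant-multiplicity initial segment in each direction $\vec{v} \in S$, and you assert finiteness because ``each such branch point is supported by a critical point,'' citing Proposition~\ref{Prop: 2 critical points}. That proposition concerns connected components of $\Ram_\varphi$, not branching within a single component, and does not deliver the claim. What is actually needed is this: if $z$ lies on the constant-multiplicity segment out of $x$ and some side-branch $\BB_z(\vec{u})^-$ meets $\Ram_\varphi$ but contains no critical point, then (since a disk containing no critical point is disjoint from $\Hull(\Crit(\varphi))$) Proposition~\ref{Prop: Inseparable Boundary} forces $\varphi$ to have inseparable reduction at $z$, hence $p \mid m_\varphi(z) = m_\varphi(x,\vec{v})$ --- contradiction. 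Only with this link between off-segment branching and critical points does finiteness of the critical set give the positive lower bound on $\rho$-distance from $x$ to a branch that your choice of $y_{\vec{v}}$ requires. This is precisely the content of the paper's argument, which first establishes separable reduction throughout a weak neighborhood of $x$ (via a contradiction routed through Proposition~\ref{Prop: Inseparable Boundary}) and then invokes the same proposition once more to show that in such a neighborhood every ramified tangent direction at a branch point leads to a boundary point or critical point.
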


\begin{remark}
	When $\varphi$ has nonconstant reduction, the proposition says that the ramification locus is locally a finite tree at the Gauss point if and only if the reduction $\widetilde \varphi \in \tilde k(z)$ is tamely ramified. 
\end{remark}

\begin{proof}
	Evidently $p \mid m_\varphi(x, \vec{v})$ for all $x$ and all $\vec{v} \in T_x$ if $\varphi$ is inseparable, and $\Ram_\varphi = \Berk$, so we may exclude this case from the remainder of the proof. We may also assume that $m_\varphi(x) > 1$; else, $\Berk \smallsetminus \Ram_\varphi$ is a weak and strong open neighborhood on which the ramification locus is locally a finite (empty) tree near $x$. 
	
	Suppose first that $p \mid m_\varphi(x, \vec{v})$ for some tangent vector $\vec{v} \in T_x$. Let $U$ be any (weak or strong) open neighborhood of $x$, and let $y \in \BB_x(\vec{v})^- \cap U$ be a type II point such that $m_\varphi(y) = m_\varphi(y, \vec{w}) = m_\varphi(x, \vec{v})$, where $\vec{w}$ is the tangent vector containing $x$. Lemma~\ref{Lem: Inseparable Nearby} implies that $\varphi$ has inseparable reduction at some point of the segment $(x,y)$, so that $\Ram_\varphi \cap U$ is not a finite tree. As $U$ was arbitrary, we conclude that the ramification locus is not locally a finite tree near $x$.
	
	Now suppose that $p \nmid m_\varphi(x, \vec{v})$ for all tangent vectors $\vec{v} \in T_x$. In particular, $\varphi$ has separable reduction at $x$. It suffices to show that $\Ram_\varphi$ is locally a finite tree near $x$ for the weak topology. We claim that there is a weak open neighborhood $U$ of $x$ such that $\varphi$ has separable reduction at all points of $U$. If not, there is a sequence $(y_n)$ of type II points approaching $x$ at which $\varphi$ has inseparable reduction. Since $\varphi$ has separable reduction at $x$, there are only finitely many ramified tangent directions at $x$. It follows that there is a finite set of tangent directions containing the sequence $(y_n)$, else $\Ram_\varphi$ would have infinitely many connected components. By passing to a subsequence if necessary, we may assume that $(y_n)$ lies inside $\BB_x(\vec{v})^-$ for some tangent vector $\vec{v} \in T_x$. Moreover, the hyperbolic distance between $y_n$ and $x$ must tend to zero. There is a path $(x,x') \subset \BB_x(\vec{v})^-$ on which $m_\varphi(y) = m_\varphi(x, \vec{v})$ for $y \in (x,x')$. In particular, $p \nmid m_\varphi(y)$. Thus $y_n \not\in (x,x')$ for any $n$. Since $\Ram_\varphi$ has only finitely many connected components, there must be infinitely many branch points of $\Ram_\varphi$ along $(x,x')$. Each branch must contain a critical point, else Proposition~\ref{Prop: Inseparable Boundary} implies $\varphi$ has inseparable reduction at each branch point. As there are only finitely many critical points, we have reached a contradiction.
	
	To complete the proof, let $U$ be a weak neighborhood of $x$ on which $\varphi$ has separable reduction. For each branch point $y \in U \cap \Ram_\varphi$, each tangent direction $\vec{v} \in T_y$ that points along $\Ram_\varphi$ must contain either a boundary point of $U$ or a critical point of $\varphi$ (Proposition~\ref{Prop: Inseparable Boundary}). Since there are only finitely many of each of these types of point, there can be only finitely many branch points in $U$. 
\end{proof}

	We conclude this section by giving several characterizations of tame rational functions. In particular, this applies when the residue characteristic of $k$ satisfies $p = 0$ or $p > \deg(\varphi)$ (Corollary~\ref{Cor: Large Res Char is Tame}). 
	
\begin{cor}[Tame Characterization]
\label{Cor: Tame characterization}
	Let $\varphi \in k(z)$ be a nonconstant separable rational function. The following statements are equivalent:
	\begin{enumerate}
		\item\label{Item: Tame1} $\varphi$ is tame.
		\item\label{Item: Tame2} $\Ram_\varphi \subset \Hull(\Crit(\varphi))$. 		
		\item\label{Item: Tame3} The ramification locus $\Ram_\varphi$ has empty strong interior.
		\item\label{Item: Tame4} $\varphi$ has separable reduction at all points of $\Berk$.
		\item\label{Item: Tame5} $\varphi$ has separable reduction at all type~II points of $\Berk$.
		\item\label{Item: Tame6} The endpoints of the ramification locus are precisely the critical points of $\varphi$. 
	\end{enumerate}
\end{cor}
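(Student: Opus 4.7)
The plan is to establish the equivalences via $(2)\Leftrightarrow(1)$, $(3)\Leftrightarrow(4)$, $(4)\Leftrightarrow(5)$, $(3)\Rightarrow(2)$, $(1)\Rightarrow(3)$, and $(4)\Leftrightarrow(6)$, using three central tools already in hand: Proposition~\ref{Prop: Inseparable Boundary} (inseparable reduction at the type~II boundary of any open Berkovich disk disjoint from $\Hull(\Crit(\varphi))$ that meets $\Ram_\varphi$), Proposition~\ref{Prop: Interior Point} (inseparable reduction at $x$ is equivalent to $x$ being a strong interior point of $\Ram_\varphi$), and Proposition~\ref{Prop: Endpoints} (the trichotomy classifying endpoints of $\Ram_\varphi$, with types~II and~IV forcing inseparable reduction on a nearby segment).

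I begin with the easier implications. For $(2)\Rightarrow(1)$: separability and the Hurwitz formula make $\Crit(\varphi)$ finite, so $\Hull(\Crit(\varphi))$ is a finite subtree whose closed subsets have only finitely many branch points. The equivalence $(3)\Leftrightarrow(4)$ is an immediate pointwise restatement of Proposition~\ref{Prop: Interior Point}. For $(3)\Rightarrow(2)$: any $x\in\Ram_\varphi\smallsetminus\Hull(\Crit(\varphi))$ is contained in a rational-radius open Berkovich disk $U$ that is disjoint from $\Hull(\Crit(\varphi))$ and has type~II boundary~$\zeta$; Proposition~\ref{Prop: Inseparable Boundary} gives inseparable reduction at $\zeta$, which by Proposition~\ref{Prop: Interior Point} makes $\zeta$ a strong interior point of $\Ram_\varphi$, contradicting~(3). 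For $(1)\Rightarrow(3)$ I argue the contrapositive: a strong interior point provides a $\rho$-ball inside $\Ram_\varphi$ containing infinitely many type~II points, each of which is itself in the strong interior; every tangent direction at such a point then contains $\Ram_\varphi$-points arbitrarily close, making it a branch point of $\Ram_\varphi$ and producing infinitely many branch points. The equivalence $(4)\Leftrightarrow(5)$ reduces to $(5)\Rightarrow(4)$, handled by treating each non-type-II point separately: if $\varphi$ were inseparable, every type~II point would have inseparable reduction; at a type~III point one combines Proposition~\ref{Prop: Type III} with Lemma~\ref{Lem: Inseparable Nearby} applied (after a conjugation to the Gauss point) at a type~II point on the relevant segment; at a type~IV point $x$, pass to an extension $K/k$ with $\iota_k^K(x)$ of type~II (Proposition~\ref{Prop: Huge Extension}), use Proposition~\ref{Prop: Interior Point} in $\Berk_K$ to produce a strong neighborhood of $\iota_k^K(x)$ with inseparable reduction everywhere, and select a type~II $y\in\Berk_k$ whose image lies in that neighborhood, transporting via Proposition~\ref{Prop; Stable}.

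The main obstacle is $(4)\Leftrightarrow(6)$. The forward direction $(4)\Rightarrow(6)$ follows from Proposition~\ref{Prop: Endpoints}: endpoints must be of type~I,~II, or~IV; types~II and~IV require inseparable reduction on a nearby segment, forbidden by~(4); thus the endpoints are precisely the type~I ramified points, which are the critical points (and every critical point is manifestly an endpoint of $\Ram_\varphi$). For $(6)\Rightarrow(2)$, assume $x\in\Ram_\varphi\smallsetminus\Hull(\Crit(\varphi))$ and choose an open Berkovich disk $U$ containing $x$, disjoint from $\Hull(\Crit(\varphi))$, and with type~II boundary~$\zeta$. Proposition~\ref{Prop: Inseparable Boundary} yields inseparable reduction at $\zeta$ and makes $U\cap\Ram_\varphi$ connected; Proposition~\ref{Prop: Interior Point} then puts $\zeta$ in the strong interior of $\Ram_\varphi$, so $\zeta$ is an accumulation point of $U\cap\Ram_\varphi$. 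The set $Y:=(U\cap\Ram_\varphi)\cup\{\zeta\}$ is therefore a compact, connected, non-singleton subset of $\overline U$, hence has at least two endpoints; one is $\zeta$, and at least one other endpoint $x'$ lies in $U$. Since the only tangent direction at $x'$ containing points of $\Ram_\varphi$ is the one back toward $\zeta$, the point $x'$ is also an endpoint of $\Ram_\varphi$; as $x'\in U$ avoids every critical point, and Proposition~\ref{Prop: Endpoints} excludes type~III, $x'$ must be of type~II or~IV, contradicting~(6).
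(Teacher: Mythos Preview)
Your proof is correct, but the logical architecture is considerably more intricate than the paper's. The paper runs a single cycle $1\Rightarrow 2\Rightarrow 3\Rightarrow 4\Rightarrow 5\Rightarrow 6\Rightarrow 1$, whereas you prove a web of biconditionals. Two places stand out. First, the paper's $6\Rightarrow 1$ is a one-liner: if every endpoint of $\Ram_\varphi$ is a critical point, then each connected component is a tree with finitely many endpoints and hence finitely many branch points. You instead prove $(6)\Rightarrow(2)$ by contradiction, building a compact subtree $Y\subset\overline U$ and extracting a non-critical endpoint $x'$; this works, but it re-derives machinery (Propositions~\ref{Prop: Inseparable Boundary} and~\ref{Prop: Interior Point}) that the cycle never touches at this step. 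Second, because the paper only needs the trivial direction $4\Rightarrow 5$, it never has to argue $(5)\Rightarrow(4)$; you do, and your type-by-type analysis for non-type-II points is genuine extra work. (One small elision there: when you say Proposition~\ref{Prop: Interior Point} ``produces a strong neighborhood of $\iota_k^K(x)$ with inseparable reduction everywhere,'' what it literally gives is a strong open $V\subset\Ram_{\varphi_K}$; you then need a second application of Proposition~\ref{Prop: Interior Point} to each point of $V$ to get inseparable reduction throughout. This is routine.)

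What your approach buys is a certain modularity: each equivalence is proved in isolation, and your $(1)\Rightarrow(3)$ argument---that a strong interior type~II point is automatically a branch point of $\Ram_\varphi$ in infinitely many directions---is a clean direct observation that the paper reaches only via the detour through~(2). What the paper's cycle buys is economy: by ordering the implications carefully it never needs the harder direction of $(4)\Leftrightarrow(5)$ or the endpoint-extraction argument in your $(6)\Rightarrow(2)$.
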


\begin{remark}
	With a little more work, one can give another characterization of inseparable reduction that is intrinsic to the field $k$. In the sequel \cite{Faber_Berk_RamII_2012}, we introduce a strong continuous piecewise linear function $\tau_\varphi : \HH \to \RR_{\geq 0}$ --- defined purely in terms of the coefficients of $\varphi$ --- in order to study the behavior of the ramification locus away from the connected hull of the critical points. It turns out that $\tau_\varphi(x) > 0$ if and only if $\varphi$ has inseparable reduction at~$x$. So one could add a further equivalent statement to Corollary~\ref{Cor: Tame characterization}:
	\begin{enumerate}
	\setcounter{enumi}{6}
		\item $\tau_\varphi$ is identically zero on $\HH = \Berk \smallsetminus \PP^1(k)$. 
	\end{enumerate}
\end{remark}

\begin{proof}[Proof of Corollary~\ref{Cor: Tame characterization}] 
	
	\ref{Item: Tame1}. $\Rightarrow$ \ref{Item: Tame2}. Suppose not. Then there is a connected component $U$ of $\Berk \smallsetminus \Hull(\Crit(\varphi))$ such that $U \cap \Ram_\varphi$ is nonempty. By Proposition~\ref{Prop: Inseparable Boundary}, $\varphi$ has inseparable reduction at some point $x \in \Hull(\Crit(\varphi))$, so that $\varphi$ is not locally tame at $x$.  
	
	\ref{Item: Tame2}. $\Rightarrow$ \ref{Item: Tame3}. The separability hypothesis implies $\varphi$ has
		finitely many critical points. 
	
	\ref{Item: Tame3}. $\Rightarrow$ \ref{Item: Tame4}. Proposition~\ref{Prop: Interior Point}.
	
	\ref{Item: Tame4}. $\Rightarrow$ \ref{Item: Tame5}. Clear.	
	
	\ref{Item: Tame5}. $\Rightarrow$ \ref{Item: Tame6}. Proposition~\ref{Prop: Endpoints}. 
	
	\ref{Item: Tame6}. $\Rightarrow$ \ref{Item: Tame1}. Each of the finitely many connected components of $\Ram_\varphi$ is a nontrivial tree, and by hypothesis, the endpoints are precisely the critical points. Finitely many endpoints implies finitely many branch points. 
\end{proof}


\section{The Locus of Total Ramification}
\label{Sec: Total Ram}

\begin{define}
	Let $\varphi \in k(z)$ be a nonconstant rational function. A point $x \in \Berk$ is said to be 
	\textbf{totally ramified} for $\varphi$ if $m_\varphi(x) = \deg(\varphi)$. The \textbf{locus of total ramification} for $\varphi$ is defined as
		\[
			\Ramtot_\varphi = \{x \in \Berk : m_\varphi(x) = \deg(\varphi) \}.
		\]
\end{define}

	 Any map of degree~2 admits a critical point, which must necessarily have multiplicity~2. Thus $\Ramtot_\varphi \neq \emptyset$ when $\deg(\varphi) = 2$. But when $\deg(\varphi) \geq 3$, the locus of total ramification may be empty. 

\begin{thm}
\label{Thm: Totally Ramified}
Let $\varphi \in k(z)$ be a nonconstant rational function. The locus of total ramification $\Ramtot_\varphi$ is a closed and connected subset of the ramification locus $\Ram_\varphi$. If $\Ramtot_\varphi \neq \emptyset$, then $\Ram_\varphi$ is connected and contains $\Hull(\Crit(\varphi))$. In particular, if $\varphi$ is tame and $\Ramtot_\varphi$ is nonempty, then $\Ram_\varphi = \Hull(\Crit(\varphi))$. 
\end{thm}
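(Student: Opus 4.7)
The plan is to split the theorem into four sub-claims and dispatch them in order: (i) $\Ramtot_\varphi$ is a closed subset of $\Ram_\varphi$, (ii) $\Ramtot_\varphi$ is connected, (iii) if $\Ramtot_\varphi \neq \emptyset$, then $\Ram_\varphi$ is connected and contains $\Hull(\Crit(\varphi))$, and (iv) adding tameness promotes the containment in (iii) to an equality. Claim (i) is immediate from upper semicontinuity of $m_\varphi$ for the weak topology (Proposition~\ref{Prop: Multiplicity Properties}) together with the trivial bound $m_\varphi \leq d := \deg(\varphi)$; the inclusion $\Ramtot_\varphi \subset \Ram_\varphi$ is trivial for $d \geq 2$ (the case $d = 1$ being degenerate). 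Claim (iv) reduces to combining (iii) with the equivalence of tameness and $\Ram_\varphi \subset \Hull(\Crit(\varphi))$ recorded in Corollary~\ref{Cor: Tame characterization}. So the substance lies in (ii) and (iii).

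For (ii), I would show that $[x,z] \subset \Ramtot_\varphi$ whenever $x, z$ are both totally ramified; path-connectedness of $\Ramtot_\varphi$ then follows from unique path-connectedness of $\Berk$. Fix such $x \neq z$, let $w \in (x,z)$, and let $\vec{v}_x, \vec{v}_z \in T_w$ be the tangent directions toward $x$ and $z$ (distinct, since $w$ lies in the open segment). The key observation is that $x$ is the unique preimage of $\varphi(x)$ in $\Berk$ and accounts for the full multiplicity $d$, so \emph{all} preimages of $\varphi(x)$ (counted with multiplicity) lie in $\BB_w(\vec{v}_x)^-$. Invoking Proposition~\ref{Prop: Rivera-Letelier Mapping} and ruling out the configuration $\varphi(x) \notin \BB_{\varphi(w)}(\varphi_*(\vec{v}_x))^-$ (which would force $s_\varphi(w,\vec{v}_x) = d$, contradicting $m_\varphi(w) \geq 1$ via~\eqref{Eq: Surplus Balance Formula}), one extracts the equality $s_\varphi(w,\vec{v}_x) + m_\varphi(w,\vec{v}_x) = d$. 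The symmetric analysis with $\vec{v}_z$ gives $s_\varphi(w,\vec{v}_z) + m_\varphi(w,\vec{v}_z) = d$. Another application of~\eqref{Eq: Surplus Balance Formula} together with the trivial bound $m_\varphi(w) \geq m_\varphi(w,\vec{v})$ (Directional Multiplicity Formula) yields
\[
    d \;\geq\; m_\varphi(w) + s_\varphi(w,\vec{v}_x) + s_\varphi(w,\vec{v}_z) \;=\; m_\varphi(w) + 2d - m_\varphi(w,\vec{v}_x) - m_\varphi(w,\vec{v}_z),
\]
which forces $m_\varphi(w,\vec{v}_x) = m_\varphi(w,\vec{v}_z) = d$ and hence $m_\varphi(w) = d$.

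For (iii), fix $x \in \Ramtot_\varphi$ and let $X$ be the connected component of $\Ram_\varphi$ containing $x$. Running the preimage-counting argument in the proof of Proposition~\ref{Prop: 2 critical points} verbatim for this $x$ produces
\[
    d \;\geq\; m_\varphi(x) + \tfrac{1}{2}\sum_{c \in \Crit(\varphi) \smallsetminus X} w_\varphi(c) \;=\; d + \tfrac{1}{2}\sum_{c \in \Crit(\varphi) \smallsetminus X} w_\varphi(c),
\]
forcing $\Crit(\varphi) \subset X$. Since $X$ is closed and connected, $\Hull(\Crit(\varphi)) \subset X \subset \Ram_\varphi$. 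Any other component of $\Ram_\varphi$ would have to contain a critical point by Proposition~\ref{Prop: 2 critical points}, and there are none left; hence $\Ram_\varphi = X$ is connected.

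The main obstacle is the directional bookkeeping in (ii): one must produce the precise identity $s_\varphi(w,\vec{v}) + m_\varphi(w,\vec{v}) = d$ in each direction pointing at a totally ramified point, and in particular carefully exclude the degenerate possibility in case~(\ref{Item: Surplus Multiplicity}) of Proposition~\ref{Prop: Rivera-Letelier Mapping} where $\varphi(x)$ would sit outside the image disk. Once this is in hand, the inequality chain above is mechanical, and (iii)--(iv) follow by specializing results already developed in the paper.
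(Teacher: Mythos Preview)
Your proof is correct, but the paper's argument is both shorter and more unified. The paper exploits a single observation that handles (ii) and (iii) simultaneously: if $\zeta$ is totally ramified, then any generalized open Berkovich disk $\BB$ not containing $\zeta$ has $\varphi(\BB) \neq \Berk$ (since $\zeta$ is the unique preimage of $\varphi(\zeta)$), and hence by Corollary~\ref{Cor: Nonincreasing} the multiplicity is nonincreasing along any segment from $\partial\BB$ into $\BB$. Applying this with $\BB$ the disk at $x \in (\zeta, c)$ pointing toward $c$ gives $m_\varphi(x) \geq m_\varphi(c)$ directly. Taking $c$ ramified, critical, or totally ramified yields connectedness of $\Ram_\varphi$, the containment $\Hull(\Crit(\varphi)) \subset \Ram_\varphi$, and connectedness of $\Ramtot_\varphi$, respectively --- no surplus bookkeeping or Hurwitz formula needed.

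Your route is genuinely different: for (ii) you balance surplus multiplicities in the two directions at $w$ via~\eqref{Eq: Surplus Balance Formula}, and for (iii) you reduce to the quantitative estimate in Proposition~\ref{Prop: 2 critical points}. Both of these work, and the reduction for (iii) is a nice observation (Proposition~\ref{Prop: 2 critical points} is sharp exactly at totally ramified points). But the two-direction surplus argument in (ii) is more machinery than necessary: once you know the disk $\BB_w(\vec{v}_z)^-$ misses $\varphi(x)$, Corollary~\ref{Cor: Nonincreasing} gives $m_\varphi(w) \geq m_\varphi(z) = d$ in one stroke, without ever needing to look at the direction $\vec{v}_x$ or invoke~\eqref{Eq: Surplus Balance Formula}. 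A minor caveat: your (iii) should explicitly dispose of the inseparable case before invoking the inequality from Proposition~\ref{Prop: 2 critical points}, since the weights in your displayed inequality are all $+\infty$ in that case.
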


\begin{proof}
The result is trivial if $\Ramtot_\varphi = \emptyset$ or if $\deg(\varphi) = 1$, so we will assume that we are in neither of these cases in what follows.

Suppose $\zeta \in \Berk$ is totally ramified for $\varphi$. Let $c \in \mathcal{R}_\varphi \smallsetminus \{\zeta\}$, and let $x \in \Berk$ be any point on the open segment $(\zeta, c)$. Then $x$ is of type~II or~III. Write $\BB$ for the open Berkovich disk with boundary point $x$ and containing $c$. Then the image $\varphi(\BB)$ does not contain $\varphi(\zeta)$, and hence cannot be equal to $\Berk$, so  the multiplicities satisfy $m_\varphi(x) \geq m_\varphi(c) > 1$ (Corollary~\ref{Cor: Nonincreasing}). Thus the ramification locus is connected. Taking $c$ to be a critical point of $\varphi$, we also see that every point in the connected hull of the critical points is a ramified point.  This proves the second statement of the theorem. 

Now repeat the argument in the previous paragraph with $c$ a totally ramified point, so that $m_\varphi(x) \geq m_\varphi(c) = \deg(\varphi)$ as well. This proves connectedness of the locus of total ramification. The fact that $\Ramtot_\varphi$ is closed is a consequence of semicontinuity of $m_\varphi$  (Proposition~\ref{Prop: Multiplicity Properties}\eqref{Item: Semicontinuity}).

The final statement follows from Corollary~\ref{Cor: Tame characterization} and what we have already shown. 
\end{proof}

	Let us say that two rational functions $\varphi, \psi \in k(z)$ are \textbf{equivalent} if there exist $\sigma_1, \sigma_2 \in \PGL_2(k)$ such that $\varphi = \sigma_2 \circ \psi \circ \sigma_1$. 

\begin{cor}
\label{Cor: Connected}
Let $\varphi \in k(z)$ be a nonconstant rational function that is equivalent to one of the following:
	\begin{enumerate}
		\item a polynomial or
		\item a map with good reduction (i.e., $\deg(\varphi) = \deg(\widetilde{\varphi})$).
	\end{enumerate}
Then the ramification locus of $\varphi$ is connected and contains $\Hull(\Crit(\varphi))$.  If $\varphi$ is tame,  then $\Ram_\varphi = \Hull(\Crit(\varphi))$.
\end{cor}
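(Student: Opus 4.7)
The plan is to reduce the corollary to Theorem~\ref{Thm: Totally Ramified} by exhibiting a totally ramified point for $\varphi$ in each of the two cases. Once such a point is produced, the theorem directly yields both the connectedness of $\Ram_\varphi$ and the containment $\Hull(\Crit(\varphi)) \subset \Ram_\varphi$, and the last sentence of the corollary is then immediate from the final assertion of the theorem.

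Before treating the two cases, I would record the following general observation: if $\varphi = \sigma_2 \circ \psi \circ \sigma_1$ with $\sigma_1,\sigma_2 \in \PGL_2(k)$, then by multiplicativity of the multiplicity (Proposition~\ref{Prop: Multiplicity Properties}\eqref{Item: Multiplicativity}) together with the fact that automorphisms are everywhere unramified, we have $m_\varphi(x) = m_\psi(\sigma_1(x))$ for every $x \in \Berk$. Hence $\psi$ admits a totally ramified point if and only if $\varphi$ does, and it suffices to produce one for $\psi$.

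For the polynomial case, take $\psi \in k[z]$ of degree $d = \deg(\varphi)$. The classical point $\infty$ has $m_\psi(\infty) = d$ by the usual algebraic computation (every polynomial of degree~$d$ has a pole of order exactly~$d$ at infinity), so $\infty$ is totally ramified for $\psi$ and $\sigma_1^{-1}(\infty)$ is totally ramified for $\varphi$. For the good reduction case, I may assume (after conjugation) that $\varphi$ itself has good reduction. Then $\varphi$ has nonconstant reduction with $\deg(\tilde{\varphi}) = \deg(\varphi)$, which by the proposition of \S\ref{Sec: Basics non-Archimedean} forces $\varphi(\zeta_{0,1}) = \zeta_{0,1}$, so the Algebraic Reduction Formula applies with $\sigma_1 = \sigma_2 = \mathrm{id}$ and gives $m_\varphi(\zeta_{0,1}) = \deg(\tilde{\varphi}) = \deg(\varphi)$. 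Thus the Gauss point is totally ramified for~$\varphi$.

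In both cases a totally ramified point has been exhibited, so Theorem~\ref{Thm: Totally Ramified} gives the connectedness of $\Ram_\varphi$ together with $\Hull(\Crit(\varphi)) \subset \Ram_\varphi$, and its final sentence gives the equality $\Ram_\varphi = \Hull(\Crit(\varphi))$ when $\varphi$ is tame. There is no real obstacle here: the only subtlety is checking that the notion of ``totally ramified point'' is preserved under the equivalence relation $\varphi \mapsto \sigma_2 \circ \varphi \circ \sigma_1$, which is handled uniformly by the multiplicativity of $m_\varphi$.
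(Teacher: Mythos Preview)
Your proof is correct and follows essentially the same approach as the paper: exhibit a totally ramified point ($\infty$ for polynomials, the Gauss point for good reduction) and invoke Theorem~\ref{Thm: Totally Ramified}. The only cosmetic difference is that the paper handles the equivalence relation by citing Corollary~\ref{Cor: Coord Change} directly, whereas you unpack that corollary via multiplicativity of $m_\varphi$, which is precisely how Corollary~\ref{Cor: Coord Change} is proved.
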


\begin{proof}	
	If $\varphi$ is a polynomial, then $\infty \in \PP^1(k)$ is totally ramified, and the theorem applies. If $\varphi$ has good reduction, then the Gauss point is totally ramified for $\varphi$, and we may again use the theorem. The conclusions of the corollary are invariant under change of equivalence class representative (Corollary~\ref{Cor: Coord Change}), so the proof is complete.
\end{proof}

\noindent \textbf{Acknowledgments.}
This work was supported by a National Science Foundation Postdoctoral Research Fellowship. I would like to express my gratitude toward Matt Baker and Bob Rumely for their encouragement during the course of this investigation. J\'er\^ome Poineau and Laura DeMarco deserve my thanks for several helpful discussions. The anonymous referee also made a number of insightful suggestions that improved both the exposition and the content of this article.

\bibliographystyle{plain}
\bibliography{xander_bib}

\end{document}